\documentclass[11pt,letterpaper,reqno]{amsart}

\usepackage{amsfonts}
\usepackage{amsmath}
\usepackage{amsthm, amscd}
\usepackage{mathtools}
\usepackage{mathabx}
\usepackage{amssymb}
\usepackage{mathrsfs}
\usepackage{graphicx}
\usepackage{caption}
\usepackage{subcaption}
\usepackage{float}
\usepackage{xypic}
\usepackage[abs]{overpic}
\usepackage[alphabetic]{amsrefs}
\usepackage{etex}
\usepackage{tikz-cd}
\usepackage{slashed}
\usepackage{enumitem}

\usepackage{hyperref}
\hypersetup{
	colorlinks,
	linkcolor=[rgb]{0,0,0.7},
	urlcolor=[rgb]{0,0,0.4},
	citecolor=[rgb]{0.4,0.1,0}
}

\allowdisplaybreaks

\theoremstyle{plain}\newtheorem{Theorem}{Theorem}[section]
\theoremstyle{plain}\newtheorem{Corollary}[Theorem]{Corollary}
\theoremstyle{plain}\newtheorem{Lemma}[Theorem]{Lemma}
\theoremstyle{plain}\newtheorem{Definition}[Theorem]{Definition}
\theoremstyle{plain}\newtheorem{Proposition}[Theorem]{Proposition}
\theoremstyle{plain}
\theoremstyle{plain}
\theoremstyle{plain}\newtheorem*{Claim*}{Claim}
\theoremstyle{plain}\newtheorem*{Theorem*}{Theorem}
\theoremstyle{plain}
\theoremstyle{plain}\newtheorem{Question}[Theorem]{Question}
\theoremstyle{plain}
\theoremstyle{plain}
\theoremstyle{plain}\newtheorem{lemma}[Theorem]{Lemma}
\theoremstyle{plain}
\theoremstyle{plain}
\theoremstyle{plain}

\theoremstyle{remark}\newtheorem{Remark}[Theorem]{Remark}
\theoremstyle{remark}\newtheorem{rmk}[Theorem]{Remark}
\theoremstyle{remark}
\theoremstyle{remark}
\theoremstyle{remark}\newtheorem*{Notation*}{Notation}
\numberwithin{equation}{section}
\theoremstyle{plain}

\newcommand{\Z}{\mathbb{Z}}

\newcommand{\R}{\mathbb{R}}
\newcommand{\C}{\mathbb{C}}

\newcommand{\CP}{\mathbb{CP}}
\newcommand{\RP}{\mathbb{RP}}

\newcommand{\fr}{\mathfrak}
\newcommand{\id}{\mathrm{id}}
\newcommand{\ii}{\mathrm{i}}
\newcommand{\dd}{\mathrm{d}}
\newcommand{\Cl}{\mathrm{Cl}}

\newcommand{\lc}{DT^*S^2}
\newcommand{\Diff}{\mathrm{Diff}}

\newcommand{\Aut}{\mathrm{Aut}}
\newcommand{\spinc}{\mathrm{Spin}^{\mathbb{C}}}
\newcommand{\SW}{\mathrm{SW}}
\newcommand{\FSW}{\mathrm{FSW}}
\newcommand{\ind}{\mathrm{Ind}}
\newcommand{\PD}{\mathrm{PD}}

\def\det{\mathrm{det}}

\def\dim{\mathrm{dim}}

\def\pr{\mathrm{pr}}
\def\Fr{\mathrm{Fr}}

\def\MCG{\mathrm{MCG}}

\def\cC{\mathcal{C}}

\def\cH{\mathcal{H}}

\def\cM{\mathcal{M}}

\usepackage{lipsum}

\title[Homotopy Coherent Nielsen Realization Problem]{On the Homotopy Coherent Nielsen Realization Problem for K3-Type 4-Manifolds}
\author{Yujie Lin}
\address{Qiuzhen College, Tsinghua University, China}
\email{lyj23@mails.tsinghua.edu.cn}

\author{Yi Sha}
\address{School of Mathematical Sciences, Peking University, China}
\email{Sha\_Yi\_glgjssy@pku.edu.cn}

\begin{document}

\maketitle
\begin{abstract}
    We study the homotopy coherent analogue of the Nielsen realization problem for smooth 4-manifolds. Using family Seiberg–Witten theory, we prove that for K3-type 4-manifolds and their blow-ups, certain order two subgroups cannot be realized even homotopy coherently. This in particular gives an alternative proof that the classical Nielsen realization fails for these manifolds \cites{BKK3,FarbK3,konno2024dehn,KMTrealSW}. In contrast, we also construct non-kinetic homotopy coherent actions that are not induced by genuine group actions. 
\end{abstract}

\section{Introduction}

Let $M$ be a closed oriented smooth manifold, and let $\Diff(M)$ denote its diffeomorphism group. 
The mapping class group of $M$ is defined as $\MCG(M):=\pi_0(\Diff(M))$. 
Given a finite subgroup $G\subset \MCG(M)$, the classical Nielsen realization problem asks whether G can be realized by a genuine smooth action on M:

\begin{Question}[Nielsen Realization Problem]
   Does the subgroup $G\hookrightarrow\MCG(M)$ admit a lift to $\Diff(M)$? Equivalently, does there exist a homomorphism $G\to\Diff(M)$ making the following diagram commute?
    \begin{equation*}
        \xymatrix{
            G \ar@{-->}[r] \ar@{=}[d] & \Diff(M) \ar[d] \\
            G \ar[r] & \pi_0(\Diff(M))
        }
    \end{equation*}
If such a lift exists, we say that $G$ is \textbf{(Nielsen) realizable}. 
\end{Question}

The smooth Nielsen realization problem has been studied extensively in various dimensions. When $M$ is a closed oriented surface, Kerckhoff \cite{dim2Nielsen} proved that every finite subgroup of $\MCG(M)$ is realizable. In contrast, in dimensions at least three,  Raymond--Scott \cite{highdimNielsen} showed that Nielsen realization fails for certain nilmanifolds.

The four-dimensional case has received considerable attention in recent years. Baraglia--Konno \cite{BKK3} and Farb--Looijenga \cite{FarbK3} independently established the failure of Nielsen realization for $K3$ surfaces. Subsequently, several authors \cite{konno2024dehn,KMTrealSW,arabadji2023spin,baraglia2026mapping} constructed many further examples of smooth $4$-manifolds for which Nielsen realization fails.


When a finite subgroup $G\subset \pi_0(\Diff(M))$ fails to be realized by a genuine action, it is natural to ask whether it can instead be realized by a homotopy coherent action.

The notion of a homotopy coherent action arises naturally in situations where genuine group actions are too rigid and algebraic identities are replaced by coherent systems of homotopies. Such actions have recently found a number of applications in topology and received considerable attention; see, for example,  \cite{HLS-HF,BSU2,KPTcoherent,KPTstablization,coherentS2,KPTkinetic}. These developments motivate the following homotopy coherent Nielsen realization problem.
\begin{Definition}[Homotopy Coherent Nielsen Realization]
    Let $\iota\colon G\hookrightarrow\MCG(M)= \pi_0(\Diff(M))$ be a finite subgroup of the mapping class group. We say that $G$ is \textbf{homotopy coherently Nielsen realizable} if there exists a map
    $H\colon BG \to B\Diff(M)$
such that the following diagram commutes:
    \[
\begin{tikzcd}[column sep=large, row sep=large]
\pi_1(BG) \arrow[r, "{H_*}"{above, pos=0.5}] \arrow[d, equal]
& \pi_1(B\Diff(M)) \arrow[d, equal] \\
G \arrow[r, "\iota"{above, pos=0.5}]
& \pi_0(\Diff(M))
\end{tikzcd}
\]
Such a map $H$ is called a \textbf{homotopy coherent realization} of $G$.
\end{Definition}

We mainly study the underlying smooth $4$-manifold of a $K3$ surface. In fact, our results apply more generally to  a class of smooth $4$-manifolds sharing the same basic topological invariants as $K3$ surfaces.
\begin{Definition}[$K3$-type manifold]
    A closed oriented smooth $4$-manifold $M$ is said to be of \textbf{$K3$-type}, if 
    \begin{enumerate}
        \item $b_1(M)=0, b_2^+(M) = 3, b_2^-(M) = 19$;
        \item $M$ admits a unique spin $\spinc$ structure.
    \end{enumerate}
\end{Definition}

Following \cite{FarbK3,konno2024dehn,KMTrealSW}, we consider two types of order $2$ mapping classes. One is represented by Dehn twist $\tau_S$ along an embedded $(-2)$-sphere $S$, and the other by reflection $r_E$ along an embedded $(-1)$-sphere $E$. We refer to Sections \ref{sec:Dehn} and \ref{sec:reflection}, respectively, for their precise definitions. The following is our main theorem. 
\begin{Theorem}
    \label{thm:main thm}
    Let $M$ be a $K3$-type $4$-manifold.
    \begin{enumerate}[label=(\arabic*)]
        \item  \label{thm:main-1}Let $S\subset M$ be a smoothly embedded $(-2)$-sphere. Then the order $2$ subgroup $G$ generated by the mapping class $[\tau_S]$ of the Dehn twist along $S$ is not homotopy coherently Nielsen realizable.
        \item \label{thm:main-2}Consider the blown-up manifold $M\# n\overline{\CP^2}$. Let $E=E_1\sqcup\cdots\sqcup E_n$ be the embedded $(-1)$-spheres. Then the order $2$ subgroup $G$ generated by the mapping class $[r_E]$ of the simultaneous reflection along $E_1,\cdots, E_n$ is not homotopy coherently Nielsen realizable.
    \end{enumerate}
\end{Theorem}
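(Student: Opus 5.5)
Suppose a homotopy coherent realization $H\colon BG=\RP^\infty\to B\Diff(M)$ exists. I would pull back the universal bundle to get a smooth fiber bundle $X\to \RP^\infty$ with fiber $M$ and monodromy $\iota$. Restricting along the skeleta $\RP^k\subset\RP^\infty$ gives compatible smooth families $X_k\to\RP^k$ for every $k$, all with monodromy the generator $[\tau_S]$ (resp.\ $[r_E]$). The idea is to mimic the Baraglia--Konno and Konno--Miyazawa--Taniguchi obstructions, which come from the $\Z/2$-equivariant (or family) Seiberg--Witten theory of a genuine involution. Here we replace the group action by the family over $\RP^k$ with $k$ large.

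\textbf{Spin$^c$ structure and index bundle.} First I need a family of spin$^c$ structures. In case \ref{thm:main-1}, $\tau_S$ acts on $H^2(M)$ as the reflection in $[S]$ and preserves the unique spin structure. In case \ref{thm:main-2}, $r_E$ sends $e_i\mapsto -e_i$, so it preserves the canonical class $c=\sum e_i$ up to sign, i.e.\ it conjugates the spin$^c$ structure $\fr s$ obtained from the spin structure on $M$ and $c$. So monodromy acts on $\fr s$ either trivially or by charge conjugation. Over $\RP^k$ this gives a family of spin$^c$ structures twisted by a local system, or a $\mathrm{Pin}^-(2)$/real structure; this is the setting of family or real Seiberg--Witten theory. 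I would compute the characteristic classes of the relevant bundles over $\RP^k$:
\begin{itemize}
\item[$\bullet$] $H^+$ is the bundle of maximal positive definite subspaces. Its monodromy is computed from the action of $\iota$ on $H^+(M)$. For $\tau_S$ the reflection in a negative class acts trivially on $H^+$ up to homotopy, since it is an isometry acting on the Grassmannian. So I would compute the sign representation on $\det H^+$.
\item[$\bullet$] The Dirac index bundle $\ind D$ is a virtual (quaternionic or complex) bundle over $\RP^k$ twisted by the monodromy action.
\end{itemize}
Its class in $KO$/$K$-theory of $\RP^k$ is determined by the $\Z/2$-representation, namely the number of copies of the trivial representation versus the sign representation. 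The key point is to get a nonzero Stiefel--Whitney or Euler class $w_{top}(H^+)$, or a Bauer--Furuta type obstruction, that persists for all $k$.

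\textbf{The obstruction argument.} I would run the family Bauer--Furuta map over $\RP^k$. This is an equivariant map between Thom spaces of $\ind D$ and $H^+$ bundles, and it must satisfy a Borsuk--Ulam / tom Dieck type constraint. Comparing mod 2 cohomology degrees (Euler classes in $H^*(\RP^k;\Z/2)=\Z/2[w]/w^{k+1}$) gives an inequality relating $\mathrm{rank}$ of sign-twisted parts of $\ind D$ and $H^+$. For $K3$-type manifolds, with $\sigma=-16$ and $b^+=3$, the twisted ranks violate this inequality once $k$ is large, which yields the contradiction. Alternatively, one could use the family SW invariant: the wall-crossing/Frøyshov argument shows that $w_3(H^+)$ forces a nonvanishing that contradicts positive scalar curvature–free constraints.

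\textbf{Main obstacle.} I expect the main obstacle to be the precise determination of the monodromy representation on $\ind D$ and $H^+$, together with the charge-conjugation bookkeeping in case \ref{thm:main-2}. The difficulty is that only the homotopy class of the action is known, not a genuine lift. Because of this, the bundle class must be computed purely from $H^*(M)$ data and the spin lift. In particular, I must decide whether the monodromy lifts to the spin bundle with square $+1$ or $-1$, which affects whether the family is real or quaternionic. I would attempt this via the family index theorem over $\RP^k$, computing the Chern character through the fixed-point-free formula in $K(\RP^k)$ using only the action on cohomology.
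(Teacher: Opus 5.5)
\textbf{There is a genuine gap: the proposal never actually produces an obstruction.} The decisive sentence, that ``the twisted ranks violate this inequality once $k$ is large,'' comes with no inequality and no computation, and the ingredients you name cannot supply one.

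Both $\tau_S$ and $r_E$ act on $H^2$ as reflections in negative classes. So the monodromy fixes a maximal positive subspace and $H^+\to\RP^k$ is trivial. It has no sign-twisted summand and $w_3(H^+)=0$, so both the ``nonzero $w_{\mathrm{top}}(H^+)$'' route and the wall-crossing alternative are empty.

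Everything would then have to come from $\ind D$. For the invariant spin structure you propose in (1), its reduced class lies in $\widetilde{K}(\RP^k)$ or $\widetilde{KSp}(\RP^k)$, which is torsion. The Chern character and the family index theorem therefore see nothing. A homotopy coherent action also has no fixed points, so there is no $G$-spin or $G$-signature input to pin this class down from the action on $H^*(M)$.

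This is not a technicality. Symplectic (Nikulin) involutions of a $K3$ surface preserve the spin structure and act trivially on $H^+$, and their Borel families over $\RP^\infty$ exist. So no argument built only from the invariant spin structure, $H^+$ and rank counts can exclude $\tau_S$; in the genuine setting it is fixed-point data that separates $\tau_S$ from such involutions.

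The natural repair does not rescue the method either. You could use $\fr s$ with $\tau^*\fr s=\bar{\fr s}$, so that charge conjugation turns the relevant $H^+$ into $3\lambda$, with $w_1^3\neq 0$ on $\RP^3$. But a mod $2$ Euler-class comparison then only bites when the antilinear lift squares to $+1$. When it squares to $-1$, the symmetry group is $\mathrm{Pin}(2)$, and $w_1^3$ pulls back to $v^3=0$ in $H^*(B\mathrm{Pin}(2);\Z/2)=\Z/2[q,v]/(v^3)$. That is exactly the sign you flagged as your main obstacle, and rank counting cannot resolve it.

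\textbf{What is missing is a non-formal input: the oddness of $\SW(M,\fr s_0)$ (Morgan--Szab\'o).} The paper uses it already on the $3$-skeleton. It restricts the would-be family to $\RP^2$ and pulls it back to $S^2$. Since $S^2\to\RP^2\to\RP^3$ is null-homotopic, that $S^2$-family would be trivial, so its family Seiberg--Witten invariant would vanish.

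The $S^2$-family is clutched by $\tilde\eta_t=g_t\tau g_{t+1/2}^{-1}g_{1/2}\tau^{-1}$, where $g$ is an unknown isotopy from $\id$ to $\tau^2$. Changing $g$ by a loop $\xi$ changes $\SW(\cdot,\fr s)$ by $\SW(\xi,\fr s)-\SW(\xi,\tau^*\fr s)$. Take $\fr s=\fr s_0+\PD[S]$ (resp.\ $\fr s_0\#\fr s^{(1)}_{-3}\#\fr s^{(2)}_{-1}\#\cdots\#\fr s^{(n)}_{-1}$), so that $\tau^*\fr s=\bar{\fr s}$. Charge conjugation with $b^+=3$ then makes this change $2\SW(\xi,\fr s)$, which is even.

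For Seidel's explicit isotopy, the loop is J.~Lin's generalized Dehn twist (resp.\ the explicit reflection loop). Lin's gluing formula and the index computation $\langle c_1(\ind),[S^2]\rangle=\pm1$ give the value $\pm\SW(M,\fr s_0)$, which is odd. So the invariant is odd for every choice of coherence, and the family cannot extend over $\RP^3$.
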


The main ingredient of the proofs is a gluing formula for family Seiberg--Witten invariants proved by J. Lin \cite{linFSW}.
Although our approach also relies on Seiberg--Witten theory, it provides an alternative proof of some of the results in \cite{konno2024dehn,KMTrealSW}. 
\begin{rmk}
    Though Konno--Miyazawa--Taniguchi only considered the reflection along a single $(-1)$-sphere \cite[Theorem 1.16]{KMTrealSW}, their argument also applies to simultaneous reflections along multiple $(-1)$-spheres.
    However, the real condition required in their argument fails for $K3\# 16\overline{\CP^2}$, while our argument still works.
\end{rmk}

Another interesting case is the neck Dehn twist $\varphi$ along a smoothly embedded $S^3$. In contrast to the examples above, $\varphi$ is in fact homotopy coherently Nielsen realizable.
\begin{Theorem}
    \label{thm:K3}
    Let $M$ be a closed smooth $4$-manifold containing an embedded $S^3$, and $\varphi$ be the neck Dehn twist along this $S^3$.
   If $[\varphi]$ generates an order $2$ subgroup in $\MCG(M)$, then this subgroup is homotopy coherently Nielsen realizable.
\end{Theorem}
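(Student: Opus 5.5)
We construct the required map $BG=B\Z/2=\RP^\infty\to B\Diff(M)$ skeleton by skeleton, using the structure of the neck Dehn twist. Write $M=M_1\cup_{S^3}M_2$, or more generally let $N=S^3\times[0,1]\subset M$ be a collar of the embedded $S^3$. The neck Dehn twist is $\varphi(x,t)=(\gamma(t)\cdot x,t)$ on $N$ and the identity elsewhere. Here $\gamma\colon[0,1]\to SO(4)$ is a loop based at the identity representing the generator of $\pi_1(SO(4))=\Z/2$. Thus $\varphi$ lies in the image of a homomorphism
\[
\Phi\colon \Omega SO(4)\to \Diff_\partial(N)\to\Diff(M),
\]
obtained by extending by the identity. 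Here $\Omega SO(4)$ is the based loop group (loops $[0,1]\to SO(4)$ equal to the identity near the endpoints), with pointwise multiplication. Applying the classifying space functor gives
\[
B\Phi\colon B\Omega SO(4)\simeq SO(4)\to B\Diff(M).
\]
This is a map of spaces. It sends the generator of $\pi_1(SO(4))=\pi_0(\Omega SO(4))$ to $[\varphi]\in\pi_0\Diff(M)=\pi_1 B\Diff(M)$.

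It therefore suffices to find a map $f\colon \RP^\infty\to SO(4)$ that induces an isomorphism on $\pi_1$; then $H=B\Phi\circ f$ is the desired realization. We cannot simply use the inclusion $\RP^3\cong SO(3)\subset SO(4)$, because it only defines a map on $\RP^3$, and it does not extend to all of $\RP^\infty$ inside a finite-dimensional Lie group. Instead we route through the stable group. The inclusion $SO(4)\to SO$ induces an isomorphism on $\pi_1$, and by Bott periodicity $\Omega SO(n)$ stabilizes. Directly, though, $B\Omega SO\simeq SO$, and a $\pi_1$-isomorphism $\RP^\infty\to SO$ exists: the map $\RP^\infty\to B O(1)\to \dots$ fails, but $\RP^\infty\to SO$ via $\mathbb{R}P^{n-1}\to SO(n)$, $[v]\mapsto r_v r_{e_1}$, compatible in $n$, gives a map $\RP^\infty\to SO$ that is a $\pi_1$-isomorphism.

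The main obstacle is that $\Diff(M)$ only receives $\Omega SO(4)$, not the stable $\Omega SO$, so the above does not yet land in $B\Diff(M)$. I would try two repairs. The first is to use the fact that the hypothesis $[\varphi]^2=1$ is automatic, together with an obstruction-theoretic argument: extend a map $\RP^2\to B\Diff(M)$ cell by cell. The obstructions to extending over the $k$-cell lie in $\pi_{k-1}B\Diff(M)$ and are determined by the attaching maps of $\RP^\infty$. These are degree $0$ or $2$ on the top cell, so the obstructions are twice, or zero times, classes that come from the image of $SO(4)$. The second, and the approach I would commit to, is to realize the whole construction inside $SO(4)$ by replacing $\RP^\infty$-extension with the following observation. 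The composite $\RP^3\subset SO(4)$ is the $3$-skeleton, and the map $SO(4)\to B\Diff(M)$ need only be extended from $SO(4)$. Equivalently, we want a map $B\Z/2\to SO(4)$ that is an isomorphism on $\pi_1$. Since $SO(4)$ is a finite complex with torsion-free higher homotopy in low degrees and a nontrivial $\pi_3$, I would check that the obstructions vanish, or else replace $SO(4)$ by its image in $B\Diff(M)$ and use that $\varphi$ has order $2$ there. This is the point where the hypothesis that $[\varphi]$ has order exactly $2$ enters: it ensures that the $\pi_1$ condition is injective. Settling this extension is the crux of the argument.
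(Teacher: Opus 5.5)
Your proposal does not reach a proof, and the route you commit to cannot be completed. Any realization of the form $B\Phi\circ f$ factors through $SO(4)$. It therefore needs a map $f\colon \RP^\infty\to SO(4)$ that is an isomorphism on $\pi_1$. No such map exists, and there is not even one defined on $\RP^4$. Indeed $SO(4)\cong S^3\times SO(3)$, so $H^*(SO(4);\Z/2)\cong \Z/2[x]/(x^4)\otimes\Lambda(y_3)$. A $\pi_1$-isomorphism $f\colon\RP^4\to SO(4)$ would pull $x$ back to the generator $w$ of $H^1(\RP^4;\Z/2)$, giving $w^4=f^*(x^4)=0$, which is false. (More generally, no map from $\RP^\infty$ to a finite-dimensional complex is injective on $\pi_1$.) So ``checking that the obstructions vanish'' in $SO(4)$ fails already at the $4$-cell, and ``replacing $SO(4)$ by its image in $B\Diff(M)$'' is not an argument.

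Your first repair does not work as stated either. The degree ($0$ or $2$) of the attaching map on the top cell governs cellular homology. But the map to $B\Diff(M)$ does not factor through the collapse $\RP^{k-1}\to S^{k-1}$, so the obstruction in $\pi_{k-1}B\Diff(M)\cong\pi_{k-2}\Diff(M)$ (with twisted coefficients) is not ``twice'' anything. Consider the map extended from $\RP^3=SO(3)\subset SO(4)$. Its $4$-cell obstruction is the image of a generator of $\pi_3(SO(3))$ under $\Phi_*\colon\pi_2(\Omega SO(4))\cong\pi_3(SO(4))\to\pi_2\Diff(M)$. That generator is nonzero before passing to $\Diff(M)$. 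You would have to show it dies there, modulo indeterminacy, and nothing in the proposal does this.

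The paper's argument rests on an idea absent from your proposal. It takes $\alpha$ with pairwise commuting values and slides the twist into $S^3\times[1/2,1]$ by an isotopy commuting with $\varphi$. It then untwists $\varphi'^2$ there, producing an isotopy $H_t$ from $\varphi^2$ to $\id$ with $H_t\varphi=\varphi H_t$ for all $t$. Finally it glues over the simplicial model of $B\Z/2$ with $h_1=\varphi$, $h_2=H$ and constant higher data. This commutativity does fill the $3$-cell.

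However, your own loop-group observation shows that commutativity cannot by itself carry the induction further. Suppose the untwisting isotopy is a null-homotopy of $\alpha^2$ in $SO(4)$. Then $\varphi$, $H_t$ and every datum built from them lie in the subgroup $C^\infty_c((0,1),SO(4))$ of $\Diff(M)$, whose classifying space is $SO(4)$. Valid constant data on the $4$-cell would then give exactly the impossible $\pi_1$-isomorphism $\RP^4\to SO(4)$. What is overlooked by matching only the faces $d_0,d_n$ is that the inner faces $d_ie_n=s_{i-1}e_{n-2}$ ($0<i<n$) carry $h_{n-2}$. For $n=4$ this is $H$ itself, and commutation with $\varphi$ says nothing about how different $H_t$ interact. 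So in either approach the missing step is killing the $4$-cell (and higher) obstructions in $\pi_*\Diff(M)$, which needs input beyond this loop group.
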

Together with \cite[Theorem 7.1]{konno2024dehn}, Theorem \ref{thm:K3} provides examples of \textbf{non-kinetic} homotopy coherent action, namely, homotopy coherent actions $H \colon BG \to B\Diff(M)$ that are not induced by any genuine group action $G\to \Diff(M)$. 
Moreover, we have the following corollary:
\begin{Corollary}
    \label{cor:nonkinetic}
    Let $M$ be a simply connected spin $4$-manifold with $\sigma(M)\neq 0$. 
    If the neck Dehn twist $\varphi$ along an embedded $S^3$ on $M$ is not smoothly isotopic to the identity, then the homotopy coherent action constructed in Theorem $\ref{thm:K3}$ is non-kinetic.

    In particular, $K3\# K3$ and $K3 \# K3 \# S^2\times S^2$ admit non-kinetic homotopy coherent $\Z/2$ actions.
\end{Corollary}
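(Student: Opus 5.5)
We must show that a homotopy coherent action $H\colon BG\to B\Diff(M)$ from Theorem \ref{thm:K3} is not homotopic to $B\rho$ for any homomorphism $\rho\colon \Z/2\to\Diff(M)$. The plan is to argue by contradiction: a genuine lift would make the neck Dehn twist $[\varphi]$ Nielsen realizable.

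First fix what "induced by a genuine action" means. A homomorphism $\rho\colon G\to \Diff(M)$ induces $B\rho\colon BG\to B\Diff(M)$, and $\pi_1(B\rho)$ equals $\pi_0\circ\rho$. So if $H\simeq B\rho$, then comparing $\pi_1$ gives $\pi_0(\rho(g))=\iota(g)=[\varphi]$ for the generator $g$. Here it does not matter whether the homotopy is based or free, since $G=\Z/2$ is abelian and conjugation does not change the element.

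Hence a genuine involution $f=\rho(g)$ exists with $f^2=\id$ and $f$ smoothly isotopic to $\varphi$. This means the order two subgroup generated by $[\varphi]$ is Nielsen realizable.

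Now invoke \cite[Theorem 7.1]{konno2024dehn}. As I recall, it says that for a simply connected spin $4$-manifold with $\sigma(M)\neq 0$, a neck Dehn twist that is not isotopic to the identity is not represented by a smooth involution. Equivalently, the subgroup $\langle[\varphi]\rangle$ is not Nielsen realizable. The underlying input is a $10/8$-type or family Seiberg--Witten obstruction for smooth $\Z/2$-actions on spin manifolds with nonzero signature. This contradicts the previous step, so $H$ is non-kinetic.

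We also need the hypothesis of Theorem \ref{thm:K3} that $[\varphi]$ has order $2$. The neck Dehn twist always satisfies $\varphi^2\simeq\id$ smoothly, since $\pi_1(SO(4))=\Z/2$ makes the square of the loop trivial. Given that $\varphi$ is not isotopic to the identity, $[\varphi]$ therefore has order exactly $2$, and Theorem \ref{thm:K3} applies to give $H$.

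For the "in particular" statement, take $M=K3\#K3$ and $M=K3\#K3\#S^2\times S^2$ with $S^3$ the connected-sum neck. Both are simply connected and spin, with $\sigma=-32\neq 0$. Nontriviality of the neck Dehn twist in $\MCG$ for these manifolds is known from \cite{linFSW} (and Kronheimer--Mrowka's result for $K3\#K3$), proved via the family Seiberg--Witten gluing formula. Applying the first part then finishes the corollary.

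The main obstacle is checking that \cite[Theorem 7.1]{konno2024dehn} applies exactly under these hypotheses, namely simply connected, spin, nonzero signature, and $\varphi$ nontrivial. If it instead requires extra conditions, such as on $b^+$ or on the mod $2$ family invariant, I would verify them for the two examples directly through the computations in \cite{linFSW}.
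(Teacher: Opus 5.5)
Your argument is correct and is essentially the paper's. A kinetic realization yields a smooth involution in the class $[\varphi]$, up to conjugation in $\MCG(M)$ rather than in $G$ as you wrote, which is harmless; the paper rules this out directly by \cite[Theorem 1]{Matsumoto}: such an involution acts trivially on homology, hence is the identity, so $H$ would be nullhomotopic although $\varphi$ is not isotopic to $\id$. This plays the role of \cite[Theorem 7.1]{konno2024dehn} in your argument and settles your worry about the hypotheses (simply connected, spin, $\sigma(M)\neq 0$).

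The one slip is in the examples. Non-triviality of the neck twists on $K3\#K3$ and $K3\#K3\#S^2\times S^2$ comes from \cite{KMDehn} and \cite{linstablization}, not from the gluing formula of \cite{linFSW}.
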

\begin{rmk} 
A different family of closed $4$-manifolds admitting non-kinetic homotopy coherent actions was given by Kang--Park--Taniguchi \cite[Theorem 1.2]{KPTkinetic}.
\end{rmk}

The paper is organized as follows. 
In Section \ref{sec:FSW}, we recall the definition and basic properties of the family Seiberg--Witten invariants. 
In Section \ref{sec:criterion}, we will see how family Seiberg--Witten invariant can obstruct the homotopy coherent Nielsen realizability.
In Section \ref{sec:Dehn}, we prove Theorem \ref{thm:main thm}\ref{thm:main-1}, which is the Dehn twist along $(-2)$-sphere case. 
In Section \ref{sec:reflection}, we prove Theorem \ref{thm:main thm}\ref{thm:main-2}, which is the reflection along $(-1)$-sphere case.
In Section \ref{sec:K3}, we prove Theorem \ref{thm:K3} and give an example of non-kinetic homotopy coherent action.

\textbf{Acknowledgement.} We would like to thank Jianfeng Lin for many enlightening discussions and Hokuto Konno for helpful explanations of his work. We also thank Sungkyung Kang, Xingpei Liu, Xiayu Tan for helpful discussions, and Seraphina Eun Bi Lee for pointing out an incorrect citation in an earlier version of this paper. The second author would also like to thank Yaoping Xie for helpful conversations. 
\section{Family Seiberg--Witten theory revisited}
\label{sec:FSW}
\subsection{Family Seiberg--Witten invariants}
In this section, we briefly recall the construction and properties of the family Seiberg--Witten invariants.

Let $M$ be a connected closed oriented smooth $4$-manifold, and let $B$ be a closed smooth oriented manifold. 
Consider a smooth fiber bundle $M \to \tilde{M} \xrightarrow{p} B$. Fix a basepoint $b_0 \in B$, and identify $M $ with the fiber $M_{b_0}$ as a submanifold of $\tilde M$. 
This is called a family of $4$-manifolds over $B$, and we will denote it by $\tilde M/B$. 
\begin{Definition}
    \label{def:admissible family}
    The family $\tilde M/B$ is said to be \textbf{admissible}, if 
    \begin{itemize}
        \item the monodromy action $\pi_1(B,b_0)$ on $H_*(M;\Z)$ preserves the homology orientation, i.e., the orientation on $H^0(M;\R) \oplus H^1(M;\R) \oplus H^2_+(M;\R)$;
        \item either $b_2^+(M)>\dim B +1 $ or the monodromy action is trivial.
    \end{itemize}
\end{Definition}
In this paper, we will focus mainly on the case $B=S^2$. Note that in this case, the families are automatically admissible since $S^2$ is simply-connected.

Next, we define the Seiberg--Witten configuration space for families.
\begin{Definition} 
    \label{def:configuration on families}
    Let $\tilde M/B$ be a family over $B$.
    \begin{enumerate}
        \item $T(\tilde M/B)\coloneqq \ker(T\tilde M \xrightarrow{p_*} TB)$ is called the vertical tangent bundle of the family. 
        We will always equip the vertical tangent bundle with a Riemannian metric $g_{\tilde M/B} = \{g_b\}_{b\in B}$.
        \item A family $\spinc$ structure $\tilde{\fr{s}}$ is a lift of the orthonormal frame bundle $\Fr(\tilde M/B) \to \tilde M$ to a $\spinc(4)$-principal bundle $P$.
        \item Associating to $P$ via the standard action $\spinc(4) \curvearrowright \C^2_+ \oplus \C^2_-$, we obtain a spinor bundle $S \to \tilde M$ which canonically decomposes as $S^+ \oplus S^-$.
        \item The configuration space $\cC(\tilde M/B)$ is defined as $\bigcup\limits_{b\in B} \cC(M_b)$, where $\cC(M_b)$ is the classical Seiberg--Witten configuration space of $(M_b,g_b,\fr{s}_b)$.
    \end{enumerate}
\end{Definition}

It is well-known that for $(M,\fr{s})$, the virtual dimension for the moduli space of solutions to the Seiberg--Witten equation is  
\begin{equation}
    d(M,\fr{s}) \coloneqq \dfrac{c_1^2(\fr{s}) - 3\sigma(M) - 2\chi(M)}{4}.
\end{equation}
Similarly,  the virtual dimension for a family $(\tilde M, \tilde{\fr{s}})$ is
\begin{equation}
    d(\tilde M, \tilde{\fr{s}}) \coloneqq  d(M,\fr{s}) + \dim_{\R} B.
\end{equation}

Given a smooth perturbation family $\tilde \omega = \{\omega_b\}_{b\in B}$ of imaginary valued self-dual $2$-forms on the fibers $\{M_b\}$, i.e., $\omega_b \in \Omega^2_+(M_b;\R)$, we say a configuration $(b,A_b,\phi_b) \in \cC(\tilde M/B)$ satisfies the perturbed Seiberg--Witten equation if 
\begin{equation}
    \label{eq:SW equation}
    \begin{cases}
        F^+_{A^t_b} + \rho^{-1}(\phi_b\phi_b^*)_0 = \ii\omega_b, \\
        D_{A_b}^+\phi_b = 0,
    \end{cases}
\end{equation}
where $(\phi\phi^*)_0$ denotes the traceless part of the Hermitian endomorphism of $S^+$.

The following definition is the analogue of \cite{KMtextbook}*{Definition 22.1.1} for families.
\begin{Definition}
    \label{def:admissible condition}
    We say that the perturbation $(g_{\tilde{M}/B,}\tilde \omega)$ is an \textbf{admissible pair}, if 
    \begin{enumerate}
        \item all zeros of the equation are non-degenerate;
        \item the moduli space is regular at each point, i.e., the linearization of the equation is surjective \cite[Definition 14.5.6]{KMtextbook};
        \item there are no reducible solutions.
    \end{enumerate}
\end{Definition}

As in the classical Seiberg--Witten theory, admissible pairs are also generic in the family setting. 
By the Sard-Smale theorem and the Atiyah-Singer index theorem, for an admissible pair $(g_{\tilde M/B}, \tilde\omega)$, the moduli space is a compact smooth oriented manifold of dimension $d(\tilde M, \tilde{\fr{s}})$. The orientation is determined by the homology orientation of $M$ and the orientation of the base $B$.
When $d(\tilde M, \tilde{\fr{s}}) = 0$, we use $\# \cM(\tilde M, \tilde{\fr{s}}, g_{\tilde M/B}, \tilde \omega) \in \Z$ to denote the number of points of the moduli space counted with sign.

Next, we consider the wall-crossing. 
Let $\cH^+_b \subset \Omega^2(M_b)$ be the space of self-dual harmonic $2$-forms on $M_b$ with respect to the metric $g_b$. 
Then $\cH^+ = \bigcup\limits_{b\in B} \cH^+_b$ is a vector bundle over $B$ of rank $b_2^+(M)$.
\begin{Definition}
    \label{def:chamber}
    A \textbf{chamber} of $\tilde M/B$ is a homotopy class of sections $B \to S(\cH^+)\simeq \mathcal{H}^+\setminus 0_B$, where $S(\cH^+)$ denotes the unit sphere bundle and $0_B$ denotes the zero section.
    We use $\mathcal{CH}(\tilde M/B)$ to denote the set of chambers.

    In particular, when the monodromy is trivial and $b_2^+(M)\geq 2$, the bundle $S(\cH^+)$ is trivial with fiber $S^{b_2^+(M)-1}$. 
    In this case, a constant section determines a canonical chamber, denoted as $\xi_c$.
\end{Definition}

Given a pair $(g_{\tilde M/B},\tilde\omega)$, we assign to it a chamber $\xi(g_{\tilde M/B},\tilde \omega) \in \mathcal{CH}(\tilde M/B)$ by 
\begin{equation}
    b \mapsto \pr_{\cH^+_b} (\omega_b + 2\pi c_1(\fr{s})),
\end{equation}
where $\pr_{\cH^+_b}$ is the $L^2$-orthogonal projection from $\Omega^2(M_b)$ onto $\cH^+_b$.

The following proposition of Li--Liu shows that $\# \cM$ only depends on the chamber.
\begin{Proposition}
    \label{prop:FSW and chamber}\cite{LiLiu}
    Assume that $b_2^+(M)\geq 1$.
    \begin{enumerate}
        \item For every $\xi_0 \in \mathcal{CH}(\tilde M/B)$, there exists an admissible pair $(g_{\tilde M/B}, \tilde \omega)$ such that $\xi(g_{\tilde M/B},\tilde\omega) = \xi_0$.
        \item If $d(\tilde M, \tilde{\fr{s}})=0$, then $\#\cM(\tilde M, \tilde{\fr{s}}, g_{\tilde M/B}, \tilde \omega)$ depends only on $\xi(g_{\tilde M/B},\tilde \omega)$. 
    \end{enumerate}
\end{Proposition}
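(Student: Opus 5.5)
We prove the two parts in turn; in both we follow the classical unparametrized argument of Kronheimer--Mrowka, with the base $B$ carried along as an extra parameter.

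\textbf{Part (1): existence.} Consider the space $\Pi$ of pairs $(g_{\tilde M/B},\tilde\omega)$ consisting of a fiberwise metric and a fiberwise perturbation. The space of fiberwise metrics is the space of sections of a bundle with contractible fibers, so $\Pi$ is contractible. Let $W\subset \Pi$ be the ``wall'' of pairs for which, at some $b\in B$, the projection $\pr_{\cH^+_b}(\omega_b+2\pi c_1(\fr s))$ vanishes. Off the wall, these projections give a section of $S(\cH^+)$ and hence a chamber.

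Fix a metric family $g_{\tilde M/B}$. Given a section $\sigma\colon B\to S(\cH^+)$ representing $\xi_0$, choose $\omega_b=R\,\sigma(b)-2\pi\,\pr_{\cH^+_b}c_1(\fr s)$, where the harmonic forms $\sigma(b)$ are viewed as self-dual $2$-forms on $M_b$. Then
\[
\pr_{\cH^+_b}\bigl(\omega_b+2\pi c_1(\fr s)\bigr)=R\,\sigma(b),
\]
which is nonzero for every $b$, so the pair lies in the chamber $\xi_0$ and no reducibles occur.

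Next we add a small generic perturbation $\eta$, ranging over a Banach space of fiberwise self-dual forms. The parametrized Sard--Smale theorem applies to the universal family moduli space over $B\times\{\eta\}$, whose surjectivity follows as in the unparametrized case because of the perturbation freedom at irreducibles. This shows that for generic small $\eta$ the family moduli space is regular. Non-degeneracy is part of regularity when $d=0$. A sufficiently small $\eta$ stays off the wall, since $B$ is compact and $|R\sigma|$ is bounded below, so the chamber is unchanged.

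\textbf{Part (2): invariance within a chamber.} Take two admissible pairs $P_0,P_1$ in the same chamber. Because the chamber map is a fibration-like assignment onto homotopy classes of sections, we first want a path $P_t$ in $\Pi\setminus W$ joining them. Fix the metric path first, which is possible since metrics form a contractible space. The chamber datum along the path is then a homotopy of nonvanishing sections. We lift this to perturbations by the same formula as in Part (1): $\omega_{b,t}$ is the chosen harmonic section minus $2\pi\,\pr c_1$, plus the non-harmonic parts interpolated linearly. The non-harmonic parts do not affect the projection.

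Perturbing the path generically rel endpoints, by Sard--Smale over $B\times[0,1]$, gives a regular cobordism
\[
\cM_{[0,1]}=\bigcup_t \cM(P_t),
\]
a $1$-manifold with no reducibles. Compactness comes from the standard a priori $C^0$ bound on $|\phi|$ together with compactness of $B\times[0,1]$. The orientation is coherent, using the homology orientation, which is preserved under the admissibility hypothesis, and the orientation of $B$. The boundary of $\cM_{[0,1]}$ is $\cM(P_1)\sqcup-\cM(P_0)$, so the signed counts agree.

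\textbf{Main obstacle.} The hardest step is the path in Part (2): we must make sure a homotopy of chamber sections can be realized by a path of pairs avoiding the wall, uniformly over all $b$. The formula-based lift handles this directly. The transversality over families is otherwise routine.
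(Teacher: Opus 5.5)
Your proposal is correct. The paper gives no proof of its own here; it simply cites Li--Liu, and your argument is the standard parametrized transversality-and-cobordism argument behind that reference. You realize a chamber by the explicit perturbation $\omega_b=R\,\sigma(b)-2\pi\,\pr_{\cH^+_b}c_1(\fr s)$ plus a small generic perturbation, then compare two admissible pairs in the same chamber via a wall-avoiding path and a regular one-dimensional cobordism over $B\times[0,1]$. The only cosmetic fix is that your ``linear interpolation of the non-harmonic parts'' should be followed by projection onto $\Omega^+_{g_t}$, since the self-dual subspace moves with the metric; this affects nothing else.
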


Now we can define the family Seiberg--Witten invariants.
\begin{Definition}
    \label{def:FSW}
    Assume that $b_2^+(M)\geq 1$. Suppose $d(\tilde M, \tilde{\fr{s}})=0$ and let $\xi \in \mathcal{CH}(\tilde M/B)$. The family Seiberg--Witten invariant is defined by 
    \begin{equation*}
        \FSW_\xi(\tilde M, \tilde{\fr{s}}) \coloneqq \#\cM(\tilde M, \tilde{\fr{s}}, g_{\tilde M/B}, \tilde \omega)
    \end{equation*}
    for any admissible pair $(g_{\tilde M/B},\tilde \omega)$ with $\xi(g_{\tilde M/B},\tilde \omega)= \xi$.
    
    When $b_2^+\geq 2$ and the monodromy is trivial, we simply write $\FSW(\tilde M, \tilde{\fr{s}})$ for $\FSW_{\xi_c}(\tilde M, \tilde{\fr{s}})$.
\end{Definition}

The family Seiberg--Witten invariant can detect the nontriviality of a family, as illustrated by the following proposition. 
\begin{Proposition}
    \label{prop:vanish for trivial bundle}
    Assume $b_2^+(M)>1$. Consider the trivial family $\tilde M = M \times B \to B$.
    Then $\FSW(\tilde M, \tilde s) =0$ for any trivial family $\spinc$ structure $\tilde{\fr{s}}$ satisfying $d(\tilde M, \tilde s) = 0$.
\end{Proposition}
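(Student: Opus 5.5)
We prove Proposition~\ref{prop:vanish for trivial bundle} by choosing a perturbation pair that is pulled back from a single fiber. By Proposition~\ref{prop:FSW and chamber}, the family count depends only on the chamber. So it suffices to exhibit one admissible pair representing the canonical chamber $\xi_c$ whose moduli space is empty.

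Since the family is trivial, $\tilde M = M\times B$, the family $\spinc$ structure is the pullback of a fixed $\fr{s}$ on $M$, and the monodromy is trivial. The dimension condition reads
\[
d(M,\fr{s}) = -\dim B < 0
\]
whenever $\dim B>0$. The case $\dim B=0$ is degenerate, and we interpret the statement for positive-dimensional $B$, as in the paper's applications with $B=S^2$.

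The steps are as follows.

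\textbf{Step 1: a regular pair on the fiber.} Because $b_2^+(M)\ge 1$, the unparametrized theory gives a generic metric and self-dual form $(g,\omega)$ on $M$ with no reducible solutions and with regular moduli space. Since the virtual dimension $d(M,\fr{s})$ is negative, regularity forces $\cM(M,\fr{s},g,\omega)=\emptyset$.

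\textbf{Step 2: pull back.} Take the constant family $g_b=g$, $\omega_b=\omega$ for all $b\in B$. The parametrized moduli space is then $B\times \cM(M,\fr{s},g,\omega)=\emptyset$. Hence it is vacuously regular and nondegenerate, and it contains no reducibles, so the pair is admissible.

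\textbf{Step 3: the chamber.} The associated section is $b\mapsto \pr_{\cH^+}(\omega+2\pi c_1(\fr{s}))$. This is a constant nonzero vector in the fixed space $\cH^+_g$, and it is nonzero precisely because there are no reducibles. Under the trivialization of $S(\cH^+)$ it is therefore a constant section, i.e. the canonical chamber $\xi_c$. Consequently $\FSW(\tilde M,\tilde{\fr{s}})=\#\emptyset=0$.

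The main point needing care is Step 3. We must check that the trivialization of $S(\cH^+)$ used to define $\xi_c$ really identifies this constant section with the constant-section chamber. For a trivial family with the pulled-back metric, $\cH^+$ is literally $B\times\cH^+_g$, so this is immediate. It is also why $b_2^+>1$ is needed: the sphere $S^{b_2^+-1}$ is connected, so all constant sections are homotopic and $\xi_c$ is well defined.

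A secondary point is genericity in Step 1. Regularity for the unparametrized equations is achieved for generic $\omega$ by the standard transversality argument, and the Sard--Smale count then shows that a negative-dimensional regular moduli space must be empty.
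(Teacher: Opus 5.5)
Your proposal is correct and matches the paper's proof. You choose a pair $(g,\omega)$ on $M$ with $\cM(M,\fr{s},g,\omega)=\emptyset$, which is possible since $d(M,\fr{s})=-\dim B<0$, take it constant over $B$, and note that the resulting pair is vacuously admissible and represents the canonical chamber $\xi_c$. Your extra remarks, that $\dim B>0$ is implicitly required and that $b_2^+>1$ makes $\xi_c$ well defined, are accurate refinements of points the paper leaves implicit.
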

\begin{proof}
    Let $\fr{s} = \tilde{\fr{s} }|_M$, then $d(M,\fr{s}) = -\dim_\mathbb{R}B$. 
    By classical Seiberg--Witten theory, there exists a pair $(g,\omega)$ on $M$ such that $\cM(M,\fr{s},g,\omega)$ is empty. 
    Define the family Riemannian metric $g_{M\times B/B}$ and perturbation $\tilde{\omega}$ simply by taking $g,\omega$ on each fiber, hence $\cM(M\times B, \tilde{\fr{s}}, g_{M\times B/B},\tilde \omega)$ is also empty. 
    Note that $(g_{M\times B/B},\tilde \omega)$ is admissible, and $\xi(g_{M\times B/B},\tilde \omega)$ represents the canonical chamber.
    Therefore, $\FSW(M\times B, \tilde{\fr{s}}) = 0$.
\end{proof}

\subsection{Family Seiberg--Witten invariants over $S^2$}
In this section, we focus on smooth families over $S^2$ whose fiber $M$ is a connected closed oriented smooth 4-manifold with $b_1(M)=0$ and $b_2^+(M)>1$.
It is known that smooth $M$-family over $S^2$ can be classified by $\pi_1(\Diff(M),\id)$. 
Therefore, once $\gamma$ is fixed, the family Seiberg--Witten invariant only depends on the choice of the family $\spinc$ structure.
The following lemma shows that its value actually depends only on the $\spinc$ structure on the fiber.

\begin{lemma}
    \label{lemma:FSW spinc only on fiber}
    Let $M$ be an oriented closed smooth 4-manifold with $b_1(M)=0$ and $b_2^+(M)>1$. Fix any smooth family $M \xrightarrow{\iota} \tilde M \xrightarrow{p} S^2$ and any $\spinc$ structure $\fr{s}_0$ on $M$.
    \begin{enumerate}
        \item There exists a family $\spinc$ structure $\tilde{\fr{s}}_0$ such that $\tilde{\fr{s}}_0|_M = \fr{s}_0$.
        \item For any two extensions $\tilde{\fr{s}}_1, \tilde{\fr{s}}_2$, there exists a complex line bundle $L \to S^2$ such that $\tilde{\fr{s}}_1 = \tilde{\fr{s}}_2 \otimes p^*L$.
        \item Suppose $d(M,\fr{s}_0)=-2$, then $\FSW(\tilde M,\tilde{\fr{s}}_1) = \FSW(\tilde M, \tilde{\fr{s}}_2)$.
    \end{enumerate}
\end{lemma}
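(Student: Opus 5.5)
Part (1) is an obstruction-theory argument. A $\spinc$ structure on the vertical tangent bundle $T(\tilde M/B)$ exists iff $w_2(T(\tilde M/B))$ has an integral lift. I will use the Serre spectral sequence of $M\to\tilde M\to S^2$. Since $S^2$ is simply connected, there is no monodromy, and $b_1(M)=0$ gives $H^1(M;\Z)=0$; also $H^1(S^2)=0$. Hence $E_2^{p,q}=H^p(S^2;H^q(M))$ is nonzero only for $p\in\{0,2\}$. The only possible differentials are $d_2\colon E_2^{0,q}\to E_2^{2,q-1}$. In particular $d_2\colon H^2(M)\to H^2(S^2;H^1(M))=0$ vanishes. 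So the restriction $H^2(\tilde M;\Z)\to H^2(M;\Z)$ is surjective, and there is a short exact sequence
\[
0\to H^2(S^2;\Z)\xrightarrow{p^*} H^2(\tilde M;\Z)\to H^2(M;\Z)\to 0.
\]
The same argument works with $\Z/2$ coefficients, using $H^1(M;\Z/2)=0$; this holds because $H^1(M;\Z)=0$ and $H_1(M;\Z)$ has no $2$-torsion. I will double-check that last point. If it fails, I will instead lift $c_1(\fr s_0)$ directly.

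Concretely, pick a class $\tilde c\in H^2(\tilde M;\Z)$ restricting to $c_1(\fr s_0)$. Its reduction mod $2$ agrees with $w_2(T(\tilde M/B))$ after restriction to $M$. The difference therefore lies in $p^*H^2(S^2;\Z/2)$, which is the reduction of a class in $p^*H^2(S^2;\Z)$. Correcting $\tilde c$ by that class gives an integral lift of $w_2$, so a family $\spinc$ structure $\tilde{\fr s}$ exists. Its restriction to $M$ differs from $\fr s_0$ by some line bundle $\ell$ on $M$. Using surjectivity of restriction, I extend $\ell$ to a line bundle $\tilde\ell$ on $\tilde M$ and set $\tilde{\fr s}_0=\tilde{\fr s}\otimes\tilde\ell$.

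For (2), two family $\spinc$ structures differ by a line bundle $\Lambda\to\tilde M$. If both restrict to $\fr s_0$, then $c_1(\Lambda)|_M=0$, up to torsion issues: $\spinc$ structures on $M$ are a torsor over $H^2(M;\Z)$, so equality means the class restricts to exactly $0$. By exactness, $c_1(\Lambda)=p^*c_1(L)$ for some $L\to S^2$, so $\Lambda\cong p^*L$.

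For (3), write $\tilde{\fr s}_1=\tilde{\fr s}_2\otimes p^*L$. The key observation is that tensoring with a bundle pulled back from the base does not change the fiberwise Seiberg--Witten equations. On each fiber $M_b$, $p^*L$ restricts to the trivial bundle $L_b\times M_b$. A choice of unitary trivialization of $L$ over $S^2$ is impossible globally, but each fiber $M_b$ only sees the $1$-dimensional space $L_b$, so $S_1|_{M_b}=S_2|_{M_b}\otimes L_b$. The configuration spaces are then identified fiberwise, canonically up to the $U(1)$ gauge action, via $(A,\phi)\mapsto(A\otimes\text{trivial},\phi\otimes v)$ for any unit $v\in L_b$. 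Different choices of $v$ differ by constant gauge transformations. This gives a canonical homeomorphism of the quotient configuration spaces $\cC(\tilde M/B)/\mathcal G$ over $B$, compatible with the equations \eqref{eq:SW equation}. It is also compatible with the chamber assignment, since $c_1(\fr s_b)$ is unchanged fiberwise.

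Hence, for the same admissible pair, the moduli spaces are identified as sets and as smooth manifolds. The main obstacle is checking that the orientations agree. The orientation comes from the determinant line of the family deformation complex together with the homology orientation and the orientation of $B$. Under the identification, the linearized operators are literally the same fiberwise operators, conjugated by the identification. So the index bundles over the moduli space are isomorphic, and the induced orientations coincide. I would verify this by noting that the identification is defined on the whole Banach bundle of configurations, not just on the moduli space. The signed counts therefore agree, giving $\FSW(\tilde M,\tilde{\fr s}_1)=\FSW(\tilde M,\tilde{\fr s}_2)$. Here $d=-2+2=0$ in both cases, and the canonical chamber is used for both.
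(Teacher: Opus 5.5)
Your proposal is correct. For part (3) you use the same idea as the paper: identify $\tilde{\fr s}_1|_{M_b}$ with $\tilde{\fr s}_2|_{M_b}$ via $\phi\mapsto\phi\otimes v$ for $v\in L_b$.

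The paper does this only at the finitely many base points $b_1,\dots,b_n$ where solutions occur, and then sums fiber counts. You instead make the identification global over $B$ on the gauge quotient, using that local unit sections of $L$ differ by fiberwise-constant gauge transformations. This is the more careful version. The sign of a point in the family moduli space depends on the linearization in the base directions, so comparing orientations needs the identification on a neighborhood of $b_j$, not only on the single fiber $M_{b_j}$.

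For parts (1) and (2), the paper simply cites Baraglia's Proposition 2.1. Your Serre spectral sequence argument is a valid self-contained replacement: it gives $0\to H^2(S^2;\Z)\to H^2(\tilde M;\Z)\to H^2(M;\Z)\to 0$, and then a correction of $\tilde c$ by a class pulled back from $S^2$.

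One correction. Your parenthetical claim that $b_1(M)=0$ forces $H_1(M;\Z)$ to have no $2$-torsion is false. For example, an Enriques surface summed with a K3 has $b_1=0$, $b_2^+=4$ and $\pi_1=\Z/2$, so $H^1(M;\Z/2)\neq 0$. You do not need this claim, though.

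The only $\Z/2$ fact your ``concretely'' paragraph uses is
\[
\ker\bigl(H^2(\tilde M;\Z/2)\to H^2(M;\Z/2)\bigr)=p^*H^2(S^2;\Z/2).
\]
This holds in general, because $E_2^{1,1}=H^1(S^2;H^1(M;\Z/2))=0$ forces $F^1H^2=F^2H^2=E_\infty^{2,0}$. The surjectivity you use to extend $\ell$ is with $\Z$ coefficients, where $H^1(M;\Z)=0$ does hold.
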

\begin{proof}
    (1) and (2) in the lemma are special cases of \cite{baraglia2019obstructions}*{Proposition 2.1}. We prove (3). Since $d(\tilde M,\tilde{\fr{s}_i})=0$ for $i=1,2$, the moduli spaces $\cM(\tilde{\fr{s}}_i)$ are finite sets. 
    Therefore, only finitely many points $b_1,\dots,b_n$ on $S^2$ occur as the base points of elements of the moduli spaces. 
    For each $b_j$, choose a nonzero vector $v_j \in L_{b_j}$ which gives an identification $L_{b_j} \cong \C$. 
    Then the bundle map between the bundles over $M_{b_j}$ 
    \begin{align*}
        S_{b_j} & \to S_{b_j} \otimes L_{b_j} \\
        \phi & \mapsto \phi \otimes v_j
    \end{align*}
    induces an isomorphism $\tilde{\fr{s}}_1|_{M_{b_j}} \cong \tilde{\fr{s}}_2|_{M_{b_j}}$ for each $j$.
    
     Therefore, $\#\cM(M_{b_j},\tilde{\fr{s}}_1|_{M_{b_j}}) = \#\cM(M_{b_j},\tilde{\fr{s}}_2|_{M_{b_j}})$. We have
    \begin{align*}
        & \FSW(\tilde M, \tilde{\fr{s}}_1) = \# \cM(\tilde M, \tilde{\fr{s}}_1) = \sum\limits_{j=1}^n \#\cM(M_{b_j},\tilde{\fr{s}}_1|_{M_{b_j}}) \\
        = & \sum\limits_{j=1}^n \#\cM(M_{b_j},\tilde{\fr{s}}_2|_{M_{b_j}}) =  \# \cM(\tilde M, \tilde{\fr{s}}_2) =\FSW(\tilde M, \tilde{\fr{s}}_2).
    \end{align*}
    This proves (3).
\end{proof}
\begin{rmk}
    In fact, (3) holds for more general base spaces than $S^2$.
\end{rmk}

By the preceding lemma, the following invariant is well defined. 
\begin{Definition}
    \label{def:FSW for loop}
    For any $\spinc$ structure $\fr s$ on $M$ with $d(M,\fr{s})=-2$, and any $\gamma \in \pi_1(\Diff(M),\id)$, define 
\begin{equation*}
        \SW(\gamma,\fr{s}) \coloneqq \FSW(\tilde M, \tilde{\fr{s}}),
    \end{equation*}
    where $\tilde M\to S^2$ is the family determined by $\gamma$ and $\tilde{\mathfrak{s}}$ is any family $\spinc$ structure extending $\mathfrak{s}$. 
\end{Definition}

The following three properties of $\SW(\gamma, \fr{s})$ were established by Baraglia.
\begin{Proposition}\label{Prop: group homomorphism}
    \cite[Theorem 2.6]{BaragliaK3}
    If $b_2^+(M)\geq 2$, then for each $\spinc$ structure with $d(M,\fr{s})=-2$, the map 
    \begin{equation*}
        SW(-,\fr{s})\colon \pi_1(\Diff(M),\id) \to \Z 
    \end{equation*} 
    is a group homomorphism.
\end{Proposition}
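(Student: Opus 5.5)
To prove that $\SW(-,\fr{s})$ is additive, I would realize the concatenated loop $\gamma_1\cdot\gamma_2$ as a family over $S^2$ obtained by fiber-sum gluing the two families along a fiber, and then show that the family invariant splits as a sum over the two halves. Recall that an $M$-family over $S^2$ determined by $\gamma\in\pi_1(\Diff(M),\id)$ is the clutching construction: glue $M\times D^2_+$ and $M\times D^2_-$ along $M\times S^1$ via $(x,\theta)\mapsto(\gamma_\theta(x),\theta)$. For $\gamma_1\gamma_2$, I would use the pinch map $S^2\to S^2\vee S^2$. Then the family $\tilde M_{\gamma_1\gamma_2}$ is obtained from the families $\tilde M_{\gamma_1}$ and $\tilde M_{\gamma_2}$ by cutting out a trivialized product $M\times D^2$ over a small disk in each base and gluing the complements along $M\times S^1$. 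Concretely, the base $S^2$ decomposes as $D_1\cup A\cup D_2$. Over $D_i$ the family is isomorphic to $\tilde M_{\gamma_i}$ restricted to the complement of a small disk, and over the annulus $A$ the family is trivial. The family $\spinc$ structure $\tilde{\fr s}$ extending $\fr s$ is glued in the same way; this is possible by Lemma \ref{lemma:FSW spinc only on fiber}, and the value is independent of the extension.

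Next I would choose perturbation data adapted to this decomposition. Since $d(M,\fr s)=-2$ and $b_2^+(M)\geq 2$, a generic fiberwise pair $(g,\omega)$ on the single manifold $M$ has empty moduli space, and so does a generic path of pairs. The space of pairs in the canonical chamber has the relevant connectivity because $b_2^+\ge 2$; this is where the hypothesis enters, since the sphere $S^{b_2^+-1}$ is then connected, so the chamber condition is consistent. I would take the family data $(g_{\tilde M/B},\tilde\omega)$ to equal a fixed pair $(g,\omega)$ on every fiber over $A$ and over neighborhoods of the boundaries of $D_1,D_2$. On the $\tilde M_{\gamma_i}$ side I would use admissible data that equal the same constant $(g,\omega)$ over the removed disk. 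With these choices, the chamber of the glued data is the canonical chamber: the monodromy is trivial, the self-dual harmonic bundle is trivialized compatibly, and the chamber section is constant over $A$.

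The solution count is then local in the base. No solution lies over $A$ or over the removed disks, because the data there is the constant pair $(g,\omega)$, whose moduli space is empty; genericity is preserved because the empty moduli space over a $2$-parameter region is regular when the expected dimension is $-2+2=0$, provided we keep $(g,\omega)$ outside the codimension-two locus. Hence
\[\#\cM(\tilde M_{\gamma_1\gamma_2})=\#\cM(\tilde M_{\gamma_1}|_{D_1})+\#\cM(\tilde M_{\gamma_2}|_{D_2}).\]
The two terms on the right are $\SW(\gamma_1,\fr s)$ and $\SW(\gamma_2,\fr s)$, with orientations agreeing because the base orientations and homology orientation are restricted compatibly.

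I expect the main obstacle to be the chamber bookkeeping. The constant data over $A$ must produce the canonical chamber on each of $\tilde M_{\gamma_1}$, $\tilde M_{\gamma_2}$ and the glued family simultaneously. The constant-chamber section on $\tilde M_{\gamma_i}$, restricted to $D_i$, agrees with the glued one up to homotopy rel boundary, and this may require an argument that the trivializations of $\cH^+$ are compatible. I would handle this by using Proposition \ref{prop:FSW and chamber} to fix the data over the complement first, and then extending it over $D_i$ within the prescribed chamber.
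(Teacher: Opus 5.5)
Your argument is correct. The paper does not prove this proposition; it quotes it from Baraglia, so there is no in-paper argument to compare against. Your proof is the standard one: realize $\gamma_1\gamma_2$ as the base connected sum of the two clutched families via the pinch map, make the data constant and solution-free over the gluing region, and let the zero-dimensional moduli space localize onto the two halves.

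Two steps deserve a cleaner treatment than you give them.

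\emph{Relative existence.} Proposition \ref{prop:FSW and chamber} is not a relative existence statement, but you do not need one.
\begin{enumerate}
\item Start from any admissible data on $\tilde M_{\gamma_i}$ in the canonical chamber.
\item Pick a small disk $\Delta_i$ in the base over which no solutions lie, and trivialize the family over $\Delta_i$.
\item Precompose the data there with a map $\Delta_i\to\Delta_i$ that is the identity near $\partial\Delta_i$ and collapses a smaller concentric disk to the centre.
\end{enumerate}
Every pair occurring over $\Delta_i$ already occurred before, so no solutions or reducibles are created and regularity over $\Delta_i$ is vacuous. The chamber is unchanged because this map is homotopic to the identity rel boundary. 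If the two resulting constant pairs differ, join them over $A$ by a generic path. Since $d(M,\fr s)=-2$, the path carries no irreducible solutions. Since the wall has codimension $b_2^+\ge 2$, the path avoids reducibles.

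This step, together with the well-definedness of $\xi_c$ (a constant section of $S(\cH^+)$ is unique up to homotopy only when $S^{b_2^+-1}$ is connected), is where $b_2^+\ge 2$ is really used. Your sentence about the ``connectivity of the space of pairs in the canonical chamber'' is too vague to carry that role.

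\emph{Chamber bookkeeping.} This only matters when $b_2^+=3$. For $b_2^+=2$ one has $[S^2,S^1]=0$, and for $b_2^+\ge 4$ one has $\pi_2(S^{b_2^+-1})=0$.
\begin{enumerate}
\item $\cH^+$ maps isomorphically onto a maximal positive-definite subbundle of the flat bundle with fibre $H^2(M;\R)$.
\item That flat bundle is canonically trivial over the simply connected base.
\item The space of maximal positive-definite subspaces of $H^2(M;\R)$ is contractible.
\end{enumerate}
Hence $S(\cH^+)$ is trivialized canonically up to homotopy, compatibly with restriction to $D_1$, $A$, $D_2$. The glued chamber section is then homotopic to the pinch composite of the two sections; the path over $A$ depends only on the radial coordinate and contributes nothing. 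So its degree is $0+0=0$, which disposes of the concern in your last paragraph.
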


\begin{Proposition}
    \label{lemma:conjugation on SW}
    \cite[Proposition 2.7]{BaragliaK3}
    Let $\fr{s}$ be a $\spinc$ structure on $M$ with $d(M,\fr{s})=-2$. For any $f\in \Diff(M)$ that preserves the homology orientation of $M$ and $\gamma \in \pi_1(\Diff(M))$, we have 
    \begin{equation}
        \SW(\gamma,\fr{s}) = \SW(f^{-1}\gamma f,f^*\fr{s}),
    \end{equation}
    where $(f^{-1}\gamma f)(t) = f^{-1}\circ \gamma(t) \circ f \in \Diff(M)$ for $t \in S^1$. 
\end{Proposition}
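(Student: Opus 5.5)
The idea is to realize conjugation as a fiberwise diffeomorphism between the two families over $S^2$ and then transport everything across it: the family, the family $\spinc$ structure, the family metric, the perturbation, and the chamber.

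Let $\tilde M_\gamma \to S^2$ be the clutching family of $\gamma$. It is obtained by gluing $M\times D^2_+$ and $M\times D^2_-$ along $M\times S^1$ via $(x,t)\mapsto(\gamma(t)(x),t)$. The family $\tilde M_{f^{-1}\gamma f}$ is defined the same way with clutching map $f^{-1}\gamma(t)f$.

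Define
\[
F\colon \tilde M_{f^{-1}\gamma f}\to \tilde M_\gamma
\]
by $f\times\id$ on both halves. Compatibility with the clutching reduces to the identity
\[
\gamma(t)\circ f = f\circ (f^{-1}\gamma(t) f),
\]
so $F$ is a well-defined fiber-preserving diffeomorphism covering $\id_{S^2}$. On the base fiber $M=M_{b_0}$ (taking $b_0$ in $D^2_+$), $F$ restricts to $f$.

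Now transport the data. Choose a family $\spinc$ structure $\tilde{\fr s}$ on $\tilde M_\gamma$ extending $\fr s$, which exists by Lemma \ref{lemma:FSW spinc only on fiber}(1). Then $F^*\tilde{\fr s}$ is a family $\spinc$ structure on $\tilde M_{f^{-1}\gamma f}$, and it extends $f^*\fr s$. By Definition \ref{def:FSW for loop} and Lemma \ref{lemma:FSW spinc only on fiber}(3), it therefore computes $\SW(f^{-1}\gamma f,f^*\fr s)$; note also $d(M,f^*\fr s)=d(M,\fr s)=-2$.

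Pick an admissible pair $(g,\tilde\omega)$ on $\tilde M_\gamma$ in the canonical chamber $\xi_c$. Pulling back gives $(F^*g,F^*\tilde\omega)$. Since $F$ is a fiberwise isometry for these structures, it induces a bijection of Seiberg--Witten configurations, and solutions of \eqref{eq:SW equation} correspond. Nondegeneracy, regularity and irreducibility are all preserved, so the pulled-back pair is admissible and the two moduli spaces are identified as sets.

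Two points remain, and the second is the main obstacle.

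First, the signs must agree. The orientation of the moduli space is determined by the homology orientation and the base orientation. $F$ covers $\id_{S^2}$, and on fibers it acts on $H^0\oplus H^1\oplus H^2_+$ through $f^*$. By hypothesis $f$ preserves the homology orientation, so the orientations match, and each point is counted with the same sign.

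Second, $F^*$ must carry the canonical chamber to the canonical chamber. $F$ induces a bundle isomorphism $\cH^+(\tilde M_{f^{-1}\gamma f})\cong \cH^+(\tilde M_\gamma)$, which maps the chamber of the pulled-back pair to $\xi(g,\tilde\omega)=\xi_c$. Both bundles are trivialized by monodromy-free parallel transport, since $S^2$ is simply connected. Under these trivializations the isomorphism is, up to homotopy, the constant map $f^*$ on $H^2_+(M)$, computed at $b_0$. A constant section then goes to a constant section, so canonical goes to canonical.

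To make the last step precise, I would realize the canonical chamber as the homotopy class of a section that is flat under the Gauss--Manin connection on the positive-definite Grassmannian bundle. $F$ intertwines these flat structures because it is a diffeomorphism of families.

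Combining the two points gives
\[
\SW(\gamma,\fr s)=\FSW(\tilde M_\gamma,\tilde{\fr s})=\FSW(\tilde M_{f^{-1}\gamma f},F^*\tilde{\fr s})=\SW(f^{-1}\gamma f,f^*\fr s).
\]
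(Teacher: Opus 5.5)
Your proof is correct and takes essentially the standard route: the paper gives no argument of its own and simply cites Baraglia's Proposition 2.7, which rests on this same naturality of the family invariant under the fiberwise diffeomorphism $F\colon \tilde M_{f^{-1}\gamma f}\to\tilde M_\gamma$ induced by $f$. You also isolate the two points that genuinely need checking: the homology-orientation hypothesis makes $F$ preserve the signs in the moduli count, and $F$ carries the canonical chamber to the canonical chamber because it induces a morphism of the trivial flat $H^2$ local systems which, in the flat trivializations, is the constant map $f^*$.
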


\begin{Proposition}\cite[Proposition 2.8]{BaragliaK3}\label{Prop: charge conjugation}
    Let $\fr{s}$ be a $\spinc$ structure on $M$ with $d(M,\fr{s})=-2$, $\gamma \in \pi_1(\Diff(M))$.  Then we have
    \[\SW(\gamma,\mathfrak{s})=(-1)^{\frac{b_2^+(M)-1}{2}}\SW(\gamma,\bar{\mathfrak{s}})\]
\end{Proposition}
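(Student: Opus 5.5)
The approach is charge conjugation acting on the family Seiberg--Witten equations, following the classical argument for $\SW(M,\fr s)=\pm\SW(M,\bar{\fr s})$ adapted to families over $S^2$. Fix the loop $\gamma$ and let $\tilde M\to S^2$ be the associated family. By Lemma \ref{lemma:FSW spinc only on fiber}, $\fr s$ extends to a family $\spinc$ structure $\tilde{\fr s}$ with spinor bundle $S=S^+\oplus S^-$. The conjugate family structure $\bar{\tilde{\fr s}}$ has spinor bundle $\bar S$, using the quaternionic structure $j$ on $\C^2_\pm$. Its restriction to $M$ is $\bar{\fr s}$, so it can be used to compute $\SW(\gamma,\bar{\fr s})$. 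Moreover $d(M,\bar{\fr s})=d(M,\fr s)=-2$, since $c_1(\bar{\fr s})=-c_1(\fr s)$.

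\textbf{Step 1: identify the moduli spaces.} Choose an admissible pair $(g_{\tilde M/B},\tilde\omega)$ for $\tilde{\fr s}$ in the canonical chamber. Fiberwise, the map $(b,A_b,\phi_b)\mapsto(b,\bar A_b,j\phi_b)$ sends solutions of \eqref{eq:SW equation} for $(\tilde{\fr s},\tilde\omega)$ bijectively to solutions for $(\bar{\tilde{\fr s}},-\tilde\omega)$. This works because $F^+_{\bar A^t}=-F^+_{A^t}$ and $(j\phi(j\phi)^*)_0=-\overline{(\phi\phi^*)_0}$ in the appropriate sense. The map also intertwines the linearizations, so regularity and irreducibility are preserved and $(g_{\tilde M/B},-\tilde\omega)$ is admissible for $\bar{\tilde{\fr s}}$.

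\textbf{Step 2: check the chamber.} The chamber of the new pair is $b\mapsto \pr_{\cH^+_b}(-\omega_b-2\pi c_1(\fr s))$, which is the antipode of the original section. For trivial monodromy the sphere bundle is $S^2\times S^{b_2^+-1}$, and the antipode of a constant section is again constant. Hence the new pair lies in the canonical chamber $\xi_c$, and $\#\cM$ computes $\SW(\gamma,\bar{\fr s})$.

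\textbf{Step 3: compare orientations (main obstacle).} The moduli spaces are now in bijection, and what remains is the sign change of each point's orientation. The orientation comes from the determinant line of the family deformation complex together with the orientation of $S^2$ and the homology orientation. The base directions are untouched by conjugation. On the fiber deformation complex, conjugation acts on the real part $H^0\oplus H^1\oplus H^2_+$ by $-1$, because $\bar A$ reverses the sign of the $\ii\R$-valued forms. This reverses the homology orientation by $(-1)^{1+b_1+b_2^+}=(-1)^{1+b_2^+}$.

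On the complex Dirac part, conjugation is complex antilinear on a complex virtual space of complex dimension $\ind_\C D^+$. An antilinear identification of $\C^k$ changes the orientation by $(-1)^{k}$. Combining the two factors as in the classical computation (e.g. Morgan's or Nicolaescu's book, using $d=2\ind_\C D^+-(1+b_2^+)$ with $b_1=0$), the total sign is $(-1)^{\ind_\C D^+ + 1+b_2^+}$.

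Substituting $d(M,\fr s)=-2$ gives $\ind_\C D^+=(b_2^+-1)/2$. The total sign is therefore $(-1)^{(b_2^+-1)/2}$, after absorbing the term $1+b_2^+$, which is even when $\ind_\C D^+$ is an integer.

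I expect this orientation bookkeeping to be the delicate point. In particular, one must check that the family linearization splits compatibly with the base $TS^2$ directions, so that the base contributes no extra sign. I would handle this by stabilizing the family index bundle to a trivial virtual bundle plus $TS^2$ and applying the fiberwise sign computation pointwise at each $b_j$. Summing the signed counts then gives $\SW(\gamma,\fr s)=(-1)^{(b_2^+-1)/2}\SW(\gamma,\bar{\fr s})$.
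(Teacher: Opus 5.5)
Your charge-conjugation argument is correct, and it is the standard proof behind the cited result; the paper gives no proof of its own and simply refers to Baraglia. The ingredients you list are exactly the right ones: the bijection $(b,A_b,\phi_b)\mapsto(b,\bar A_b,j\phi_b)$ from $(\tilde{\fr s},\tilde\omega)$ to $(\bar{\tilde{\fr s}},-\tilde\omega)$, the check that negating a constant section stays in $\xi_c$ (which matters because $b_2^+=3$ allows other chambers over $S^2$), and the fiberwise orientation sign $(-1)^{\ind_\C D^+ +1+b_2^+}=(-1)^{(b_2^+-1)/2}$, with the base $T_bS^2$ directions contributing nothing since conjugation acts trivially on them.
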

In \cite{linFSW}, J. Lin proved a gluing theorem for family Seiberg--Witten invariants using family monopole Floer homology. The following proposition is a special case of that theorem tailored to our setting.
\begin{Proposition}\cite[Theorem M]{linFSW}\label{Prop: gluing}
    Let $M$ be a closed oriented smooth 4-manifold with $b_1(M)=0$ and $b_2^+(M)>1$. 
    Suppose $M$ admits a decomposition $M=M_1\bigcup\limits_YM_2$, where $Y$ is an L-space, $b_1(M_1)=0$,  $b_2^+(M_1)=0$ and $b_2^+(M_2)>1$. 
    Let $[\alpha]\in\pi_1(\Diff(M))$ be represented by a loop of diffeomorphisms supported in $\mathrm{Int}(M_1)$;
    Let $M\to \tilde{M}\to S^2$ be the smooth family associated to the clutching function $\alpha$.
    Then $\tilde M$ admits a splitting  $\tilde{M}=\tilde{M}_1\cup \tilde{M}_2$, where $M_1\to \tilde{M}_1\to S^2$ is the bundle with clutching function $\alpha$ and $\tilde{M}_2=M_2\times S^2$ is the trivial bundle;
    Let $\mathfrak{s}_0$ be a $\spinc$ structure on $M$ and $\tilde{\mathfrak{s}}$ be a family $\spinc$ structure on $\tilde{M}$ such that $d(M,\mathfrak{s}_0)=d(\tilde M, \tilde{\mathfrak{s}})=0$ and $\tilde{\mathfrak{s}}|_{\tilde{M}_2}$ is the pullback of $\mathfrak{s}_0|_{M_2}$.  
    Then we have
    \[\FSW(\tilde M,\tilde{\mathfrak{s}})=-\langle c_1(\ind_\mathbb{C}(\slashed{D}^+(\tilde{M},\tilde{\mathfrak{s}})), [S^2]\rangle\cdot\SW(M,\mathfrak{s}_0).\]
\end{Proposition}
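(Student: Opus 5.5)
The plan is to deduce this statement directly from J. Lin's general family gluing theorem \cite[Theorem M]{linFSW}. The work consists of checking that our situation meets its hypotheses and then identifying each factor in his product formula with the quantities appearing here.

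\textbf{Step 1: the splitting.} Since $\alpha$ is a loop of diffeomorphisms supported in $\mathrm{Int}(M_1)$, I will clutch two trivial families over the hemispheres $D^2_\pm$ of $S^2$ using $\alpha$. This clutching map is the identity on a collar of $Y$ and on $M_2$. Hence $\tilde M$ decomposes as $\tilde M_1\cup_{Y\times S^2}\tilde M_2$, where $\tilde M_2=M_2\times S^2$ and $\tilde M_1\to S^2$ is the $M_1$-bundle clutched by $\alpha$. The vertical metric can be chosen to be a product near $Y\times S^2$, so the gluing region is a trivial family of cylinders $[-T,T]\times Y$.

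\textbf{Step 2: the two family relative invariants.}
\begin{itemize}
\item \emph{The $M_2$ side.} By hypothesis $\tilde{\fr s}|_{\tilde M_2}$ is the pullback of $\fr s_0|_{M_2}$. So $\tilde M_2$ is a product family with a pulled-back $\spinc$ structure, and its family relative invariant is the unparametrized relative invariant of $(M_2,\fr s_0|_{M_2})$ in $\widehat{HM}(Y)$, pulled back along $S^2\to\mathrm{pt}$. Because $b_2^+(M_2)>1$, this invariant lies in the reduced part. Pairing it with the tower generator of $Y$ recovers the closed invariant $\SW(M,\fr s_0)$.
\item \emph{The $M_1$ side.} This is where the L-space hypothesis enters. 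Since $Y$ is an L-space, $\widehat{HM}(Y,\fr s|_Y)$ is a single $U$-tower with no reduced part. Since $b_1(M_1)=0$ and $b_2^+(M_1)=0$, the relative invariant of $\tilde M_1$ is computed entirely by reducibles, using a generic family of perturbations. The reducible locus over $S^2$ is a single point per fiber, since $H^1(M_1)=0$. The obstruction along it is the family index bundle of the Dirac operator. The family relative invariant is therefore the element of the tower obtained by capping with the first Chern class of this index bundle, i.e.\ the pairing $\langle c_1(\ind_\C \slashed D^+),[S^2]\rangle$ times the appropriate tower generator. The index bundle of $\tilde M_1$ agrees with that of all of $\tilde M$, up to a trivial summand coming from the product part $\tilde M_2$, so the Chern number may be computed on $\tilde M$.
\end{itemize}

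\textbf{Step 3: pairing.} Lin's theorem expresses $\FSW(\tilde M,\tilde{\fr s})$ as the pairing of the two family relative invariants in family monopole Floer homology. I will also need to confirm that the dimension normalization in Lin's formula corresponds to the $d(M,\fr s_0)=d(\tilde M,\tilde{\fr s})=0$ convention used in the statement. Substituting Step 2 into the pairing gives the product $-\langle c_1(\ind_\C\slashed D^+(\tilde M,\tilde{\fr s})),[S^2]\rangle\cdot\SW(M,\fr s_0)$, where the sign comes from the orientation conventions.

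\textbf{Main obstacle.} I expect the main difficulty to be the precise identification of the $M_1$-side invariant with the Chern number, including the overall sign $-1$ and the grading shift in the tower. This depends on Lin's orientation conventions for family moduli spaces and for the index bundle, so I would quote his computation of family relative invariants for negative definite pieces rather than redo it. The choice of extension $\tilde{\fr s}$ is harmless: the right side is independent of it because twisting by $p^*L$ changes $c_1$ of the index bundle in a way that is compatible with Lemma~\ref{lemma:FSW spinc only on fiber}, and it is already constrained by the requirement that it be the pullback on $\tilde M_2$.
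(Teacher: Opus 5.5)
Your route matches the paper's. The paper gives no argument of its own: it quotes this proposition as the special case of Lin's Theorem~M in which one side is negative definite and bounded by an L-space. So matching the hypotheses (your Step~1 and the dimension check) is all that is required.

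The mechanism you sketch in Step~2, however, is wrong in a way that would force the right-hand side to vanish. It is true that for $b_2^+(M_2)\ge 1$ the relative invariant of $M_2$ in $\widehat{HM}(Y)$ lies in $\ker p_*=HM_{\mathrm{red}}(Y)$. This follows because $p_*\circ\widehat{HM}(W)=\overline{HM}(W)\circ p_*$ and $\overline{HM}(W)=0$ when $b^+(W)\ge 1$. But you yourself note that $HM_{\mathrm{red}}(Y)=0$ for an L-space. So that element is $0$, and pairing it cannot recover $\SW(M,\fr s_0)$, which is odd in the $K3$-type applications.

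When $b_2^+(M_1)=0$, the object that carries the closed invariant is the mixed invariant, namely $\overrightarrow{HM}(M_2\setminus B^4)$ applied to the generator. This is where the hypothesis $b_2^+(M_2)>1$ enters. It lies in $\widecheck{HM}(Y)$, which for an L-space is a single tower, and the negative definite side, parametrized or not, acts on that tower. What Theorem~M supplies is, in effect, a comparison of two actions: the family action of $(\tilde M_1,\tilde{\fr s}|_{\tilde M_1})$ and the unparametrized action of $(M_1,\fr s_0|_{M_1})$. These are different $\spinc$ structures on $M_1$: $\tilde{\fr s}|_M$ agrees with $\fr s_0$ only on $M_2$ and has $d=-2$. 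The condition $d(M,\fr s_0)=d(\tilde M,\tilde{\fr s})=0$ is what puts both in the same degree, and your sketch never addresses this point.

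Your closing remark is also incorrect: the right-hand side does depend on the extension. Replacing $\tilde{\fr s}$ by $\tilde{\fr s}\otimes p^*L$ leaves $\FSW(\tilde M,\tilde{\fr s})$ unchanged, by Lemma~\ref{lemma:FSW spinc only on fiber}. It does, however, shift $\langle c_1(\ind_\C\slashed{D}^+),[S^2]\rangle$ by $\ind_\C(\slashed{D}^+_M)\,\langle c_1(L),[S^2]\rangle$. For a fiber $\spinc$ structure with $d=-2$ and $b_1=0$, the numerical index is $(b_2^+(M)-1)/2$, which equals $1$ in the $K3$-type applications.

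The formula is therefore valid only for the extension normalized by the pullback condition on $\tilde M_2=M_2\times S^2$. That extension is unique, because $p^*L$ is trivial on $M_2\times S^2$ only when $L$ is. This is why Propositions~\ref{Prop: nonvanish} and~\ref{Prop: nonvanish 2} first twist $\tilde{\fr s}$ into that form before applying the formula.

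Neither point affects a proof that simply invokes Theorem~M. But Step~2 as written does not justify the identification of factors you claim, and carried out literally it would give $0$.
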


\section{An obstruction to homotopy coherent realizability}
\label{sec:criterion}
Let $G$ be a finite group. Equip $BG$ with its standard CW structure arising from the bar construction, and denote its $n$-skeleton by $(BG)_n$. The extent to which $G$ admits a homotopy coherent realization can be measured by considering the skeleta of $BG$.
\begin{Definition}
Let $M$ be a smooth manifold, and let $\iota\colon G\hookrightarrow\pi_0(\Diff(M))$ be a finite subgroup of the mapping class group. For an integer $n\ge2$, we say that $G$ is \textbf{$\boldsymbol{n}$-realizable} if there exists a map $H_n \colon (BG)_n \to B\Diff(M)$ such that the diagram
\[
\begin{tikzcd}[column sep=large, row sep=large]
\pi_1((BG)_n) \arrow[r, "{(H_n)_*}"{above, pos=0.5}] \arrow[d, equal]
& \pi_1(B\Diff(M)) \arrow[d, equal] \\
G \arrow[r, "\iota"{above, pos=0.5}]
& \pi_0(\Diff(M))
\end{tikzcd}
\]
commutes. Such a map $H_n$ is called an \textbf{$\boldsymbol{n}$-realization} of $G$. Note that $G$ is always $2$-realizable by definition. 
\end{Definition}

We now derive an obstruction to the $3$-realizability of any order $2$ subgroup in $\pi_0(\Diff(M))$ using family Seiberg--Witten invariants. 

Let $[\tau]\in\MCG(M)$ be an element of order 2, represented by a diffeomorphism $\tau$. Then $\tau^2$ is smoothly isotopic to the identity. 
If $\langle[\tau]\rangle$ is homotopy coherently Nielsen realizable, then there exists a smooth $M$-bundle over $\mathbb{RP}^\infty$ whose restriction to the $1$-skeleton is isomorphic to the mapping torus of $\tau$
\[T\tau=M\times\mathbb{R}/(x,t+1)\sim (\tau(x),t).\]
Pulling back the mapping torus of $\tau$ along the double cover $S^1\to\mathbb{RP}^1$, we obtain the mapping torus of $\tau^2$
\[T\tau^2=M\times\mathbb{R}/(x,t+1)\sim (\tau^2(x),t).\]
Here the induced covering map sends $[(x,t)]$ to $[(x,2t)]$. An extension of $T\tau$ to a bundle over $\mathbb{RP}^2$ is determined by a choice of smooth isotopy from $\id$ to $\tau^2$. Pick any smooth isotopy 
$f:M\times[0,1]\to M$ with $f_0=\id_M, f_1=\tau^2$ and extend it to $t\in \mathbb{R}$ by setting $f_{t+1}(x)=f_t(\tau^2(x))$.  The isotopy $f_t$ induces a trivialization of $T\tau^2$
\begin{align*}
    \psi\colon T\tau^2 & \to M\times S^1 \\
    [(x,t)] & \mapsto [(f_t(x),t)].
\end{align*}
Therefore, we can fill in $M\times D^2$ to get a $M$-bundle over $\mathbb{RP}^2$, denoted by $E$. Denote the pull-back of $E$ along the double cover $S^2\to \mathbb{RP}^2$ by $\tilde{M}$. 

Next, we compute the clutching function for $\tilde{M}$. Write $S^2=D_+\cup D_-$ and identify $\partial D_- = \partial D_+ = S^1 = \R/\Z$.
The gluing map of the upper hemisphere is
\begin{align*}
    \psi^{-1}\colon M\times \partial D_+ & \to T\tau^2\\
    (x,t) & \mapsto(f_t^{-1}(x),t),
\end{align*}
while gluing map for the lower hemisphere is the conjugation of $\psi^{-1}$ using deck transformations:
\[M\times \partial D_-\to M\times\partial D_-\xrightarrow{\cong}  T\tau^2\to T\tau^2\]
\[(y,t)\mapsto(y,t+1/2)\mapsto(f^{-1}_{t+1/2}(y),t+1/2)\mapsto(\tau f_{t+1/2}^{-1}(y),t).\]
Therefore, the clutching function of $\tilde{M}$ is the transition function  between the above two trivializations, which is 
\begin{align*}
    \gamma:S^1 & \to \Diff(M) \\
    t & \mapsto f_t\tau f_{t+1/2}^{-1}.
\end{align*}
However, $\gamma$ is not based at the identity and $\gamma_0=\gamma_1=\tau f_{1/2}^{-1}$. We therefore define 
\begin{equation}
    \label{eq:clutching function}
    \tilde \gamma_t\coloneqq\gamma_t f_{1/2}\tau^{-1}=f_t\tau f_{t+1/2}^{-1}f_{1/2}\tau^{-1}
\end{equation}
 which is a loop of diffeomorphisms based at the identity.  Note that $\pi_1(\Diff(M),\id)$ is abelian since $\Diff(M)$ is a topological group, and loop concatenation is homotopic to pointwise multiplication. 
 
The following lemma describes the dependence of the clutching function $\tilde{\gamma}_t$ on the isotopy $f_t$.
\begin{Lemma}\label{lemma: depend on isotopy}
    Let $g_t=\xi_t f_t$ be another isotopy  from $\id$ to $\tau^2$, where $\xi_t$ is  a loop of diffeomorphisms based at identity, and denote $\eta_t=g_t\tau g_{t+1/2}^{-1}$. Then the corresponding based clutching function is given by:
    \[\tilde\eta_t=\eta_t\eta_0^{-1}=\xi_t\tilde\gamma_t\gamma_0\xi_{t+1/2}^{-1}\xi_{1/2}\gamma_0^{-1}.\]
    In particular, \[[\tilde\eta]=[\xi]+[\tilde\gamma]+[\gamma_0\xi^{-1}\gamma_0^{-1}]\in \pi_1(\Diff(M),\id).\]
\end{Lemma}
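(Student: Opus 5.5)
The plan is a direct computation from the definitions, followed by a translation of the pointwise formula into $\pi_1(\Diff(M),\id)$ using that this group is abelian and that its sum is pointwise multiplication.

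\textbf{Step 1: $\xi$ is periodic.} Since $g_t=\xi_t f_t$ is again an isotopy from $\id$ to $\tau^2$, we have $\xi_0=\xi_1=\id$. The isotopy $g$ is extended to $t\in\R$ by the same rule $g_{t+1}=g_t\tau^2$. On the other hand,
\[
\xi_t f_{t+1}=\xi_t f_t\tau^2=g_t\tau^2=g_{t+1}.
\]
Hence the relation $g_t=\xi_t f_t$ persists for all $t\in\R$ once $\xi$ is extended $1$-periodically. This extension is legitimate because $\xi$ is a loop.

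\textbf{Step 2: the pointwise identity.} Substituting into the definition of $\eta_t$ gives
\[
\eta_t=\xi_t f_t\tau f_{t+1/2}^{-1}\xi_{t+1/2}^{-1}=\xi_t\gamma_t\xi_{t+1/2}^{-1}.
\]
At $t=0$ this gives $\eta_0=\gamma_0\xi_{1/2}^{-1}$. By \eqref{eq:clutching function} we have $\gamma_t=\tilde\gamma_t\gamma_0$. Therefore
\[
\eta_t\eta_0^{-1}=\xi_t\tilde\gamma_t\gamma_0\xi_{t+1/2}^{-1}\xi_{1/2}\gamma_0^{-1},
\]
which is the stated pointwise formula.

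\textbf{Step 3: passing to homotopy classes.} Write the right-hand side as the pointwise product of three loops, each based at $\id$:
\[
t\mapsto\xi_t,\qquad t\mapsto\tilde\gamma_t,\qquad t\mapsto\gamma_0\bigl(\xi_{t+1/2}^{-1}\xi_{1/2}\bigr)\gamma_0^{-1}.
\]
The third loop is based at $\id$ because at $t=0$ the bracket is $\xi_{1/2}^{-1}\xi_{1/2}=\id$. Pointwise multiplication of based loops in the topological group $\Diff(M)$ is homotopic to concatenation, so $[\tilde\eta]$ is the sum of the three classes. It remains to identify the third class with $[\gamma_0\xi^{-1}\gamma_0^{-1}]$.

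\textbf{Step 4: the third loop (main obstacle).} Conjugation by the fixed diffeomorphism $\gamma_0$ is a continuous automorphism of $\Diff(M)$ fixing $\id$. It therefore induces a homomorphism on $\pi_1$ and commutes with homotopies of based loops. So it suffices to show that the loop $\sigma_t=\xi_{t+1/2}^{-1}\xi_{1/2}$ is based-homotopic to $\xi^{-1}$.

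Consider the family
\[
\sigma^s_t=\xi_{t+s}^{-1}\xi_s,\qquad s\in[0,1/2].
\]
For each $s$ this is a loop based at $\id$, by periodicity of $\xi$. At $s=0$ it is $\xi^{-1}$, and at $s=1/2$ it is $\sigma$. This is an explicit based homotopy. Right multiplication by $\xi_s$ is exactly what restores the basepoint while the parametrization is rotated. Combining Steps 3 and 4 gives
\[
[\tilde\eta]=[\xi]+[\tilde\gamma]+[\gamma_0\xi^{-1}\gamma_0^{-1}],
\]
as claimed.
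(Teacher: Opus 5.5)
Your proposal is correct and follows the same route as the paper: substitute $g_t=\xi_tf_t$ into $\eta_t\eta_0^{-1}$, then use $[\xi_{t+1/2}^{-1}\xi_{1/2}]=[\xi^{-1}]$ together with the fact that pointwise multiplication agrees with concatenation in $\pi_1(\Diff(M),\id)$. You also make explicit two points the paper leaves implicit: the $1$-periodic extension of $\xi$, which is compatible with $g_{t+1}=g_t\tau^2$, and the based homotopy $\xi_{t+s}^{-1}\xi_s$, $s\in[0,1/2]$, that justifies the identification the paper merely asserts.
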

\begin{proof}
    A direct computation gives:
    \begin{align*}
        \tilde{\eta}_t&=g_t\tau g_{t+1/2}^{-1} g_{1/2}\tau^{-1}\\
        &=\xi_t f_t \tau f_{t+1/2}^{-1}\xi_{t+1/2}^{-1}\xi_{1/2} f_{1/2}\tau^{-1}\\
        &=\xi_t\gamma_t\xi_{t+1/2}^{-1}\xi_{1/2}\gamma_0^{-1}\\
        &=\xi_t\tilde{\gamma}_t\gamma_0\xi_{t+1/2}^{-1}\xi_{1/2}\gamma_0^{-1}.
    \end{align*}
    Note that $[\xi_{t+1/2}^{-1}\xi_{1/2}]=[\xi_t^{-1}]$ in $\pi_1(\Diff(M),\id)$, which completes the proof.
\end{proof}
The family Seiberg--Witten invariants of the loops in the above Lemma \ref{lemma: depend on isotopy} satisfy the following relation.
\begin{Corollary}\label{relation of SW}
Let $\tau$ be a diffeomorphism of $M$ such that $\tau^2$ is smoothly isotopic to identity and preserves the homology orientation of $M$. 
Let $\tilde \eta, \xi,\tilde{\gamma}$ be the loops of diffeomorphisms in Lemma \ref{lemma: depend on isotopy}. Let $\mathfrak{s}$ be a $\spinc$ structure on $M$ with $d(M,\mathfrak{s})=-2$. Then
$$\SW([\tilde{\eta}],\mathfrak{s})=\SW([\xi],\mathfrak{s})+\SW([\tilde{\gamma}],\mathfrak{s})-\SW([\xi],\tau^*\mathfrak{s}).$$
\end{Corollary}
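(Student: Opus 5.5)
We apply $\SW(-,\mathfrak{s})$ to the identity in $\pi_1(\Diff(M),\id)$ from Lemma \ref{lemma: depend on isotopy},
\[[\tilde\eta]=[\xi]+[\tilde\gamma]+[\gamma_0\xi^{-1}\gamma_0^{-1}],\]
and evaluate each term. Since $b_2^+(M)\geq 2$ is implicit whenever $\SW(-,\mathfrak{s})$ is used in this section, Proposition \ref{Prop: group homomorphism} shows that $\SW(-,\mathfrak{s})$ is a homomorphism. Hence
\[\SW([\tilde\eta],\mathfrak{s})=\SW([\xi],\mathfrak{s})+\SW([\tilde\gamma],\mathfrak{s})+\SW([\gamma_0\xi^{-1}\gamma_0^{-1}],\mathfrak{s}).\]
It therefore suffices to show that $\SW([\gamma_0\xi^{-1}\gamma_0^{-1}],\mathfrak{s})=-\SW([\xi],\tau^*\mathfrak{s})$.

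For the last term we use the conjugation invariance, Proposition \ref{lemma:conjugation on SW}, with $f=\gamma_0^{-1}=f_{1/2}\tau^{-1}$. The loop $f^{-1}\xi^{-1}f$ is exactly $\gamma_0\xi^{-1}\gamma_0^{-1}$, so
\[\SW([\gamma_0\xi^{-1}\gamma_0^{-1}],\mathfrak{s})=\SW([\xi^{-1}],(\gamma_0^{-1})^{*}\mathfrak{s})=-\SW([\xi],(\gamma_0^{-1})^*\mathfrak{s}),\]
where the last equality again uses the homomorphism property. This matches the direction conventions of Proposition \ref{lemma:conjugation on SW}, whether one reads it as $\SW(\gamma,\mathfrak{s})=\SW(f^{-1}\gamma f,f^*\mathfrak{s})$ applied to $\gamma=\xi^{-1}$, or applied in reverse.

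Two hypotheses must be checked before invoking Proposition \ref{lemma:conjugation on SW}.
\begin{enumerate}
\item \emph{$\gamma_0^{-1}$ preserves the homology orientation.} Here $f_{1/2}$ is isotopic to the identity, so $f_{1/2}$ acts trivially on homology. Thus $\gamma_0^{-1}$ acts on homology exactly as $\tau^{-1}$ does. I read the hypothesis that ``$\tau^2$ is smoothly isotopic to identity and preserves the homology orientation'' as applying to $\tau$ itself; indeed $\tau^2$ trivially preserves it, so this reading is the one needed.
\item \emph{Identification of the pulled-back $\spinc$ structure.} We need $(\gamma_0^{-1})^*\mathfrak{s}=(f_{1/2}\tau^{-1})^*\mathfrak{s}=(\tau^{-1})^*f_{1/2}^*\mathfrak{s}$. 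Since $b_1(M)=0$ (the standing assumption of this subsection), $\spinc$ structures are determined up to isomorphism by an isotopy-invariant datum, so isotopic diffeomorphisms pull back $\mathfrak{s}$ to isomorphic structures and $f_{1/2}^*\mathfrak{s}\cong\mathfrak{s}$. Hence $(\gamma_0^{-1})^*\mathfrak{s}\cong(\tau^{-1})^*\mathfrak{s}$. Because $\tau^2\simeq\id$, we have $\tau^{-1}$ isotopic to $\tau$, and so $(\tau^{-1})^*\mathfrak{s}\cong\tau^*\mathfrak{s}$.
\end{enumerate}
Finally, $\SW(\cdot,\mathfrak{s})$ depends only on the isomorphism class of $\mathfrak{s}$, since isomorphic family $\spinc$ structures give the same moduli count. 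Also $d(M,\tau^*\mathfrak{s})=d(M,\mathfrak{s})=-2$, because $c_1^2$ is preserved by the orientation-preserving diffeomorphism $\tau$. So every term in the identity is defined, and we obtain $\SW([\gamma_0\xi^{-1}\gamma_0^{-1}],\mathfrak{s})=-\SW([\xi],\tau^*\mathfrak{s})$, which proves the corollary.

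The main obstacle is bookkeeping rather than substance. One must make sure that the conjugation direction in Proposition \ref{lemma:conjugation on SW} yields $\tau^*\mathfrak{s}$ rather than $(\tau^{-1})^*\mathfrak{s}$, and the isotopy $\tau^{-1}\simeq\tau$ is precisely what makes the distinction irrelevant. One must also use the isotopy invariance of pulled-back $\spinc$ structures to discard $f_{1/2}$.
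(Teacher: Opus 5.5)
Your proof is correct and takes the same route as the paper: apply the homomorphism property of $\SW(-,\mathfrak{s})$ to the identity of Lemma \ref{lemma: depend on isotopy}, then use conjugation invariance for $\gamma_0$, which preserves the homology orientation because $\gamma_0=\tau f_{1/2}^{-1}\simeq\tau$. (Read literally, Proposition \ref{lemma:conjugation on SW} with $f=\gamma_0$ gives $\SW([\gamma_0\xi^{-1}\gamma_0^{-1}],\mathfrak{s})=\SW([\xi^{-1}],\gamma_0^*\mathfrak{s})$ rather than $(\gamma_0^{-1})^*\mathfrak{s}$, but as you note, $\tau^{-1}\simeq\tau$ makes this difference immaterial.)
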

\begin{proof}
    Since $\gamma_0=\tau f_{1/2}^{-1}$ is isotopic to $\tau$, $\gamma_0$ also preserves the homology orientation of $M$. So the corollary follows from Lemma \ref{lemma: depend on isotopy}, Proposition \ref{Prop: group homomorphism} and Proposition \ref{lemma:conjugation on SW}.
\end{proof}
Therefore, if one could show that $\SW([\tilde{\eta}],\mathfrak{s})$ is always nonzero for some suitable $\spinc$ structure $\mathfrak{s}$ no matter how we change $[\xi]$, then the $M$-bundle over $S^2$ corresponding to $\tilde\eta$ is nontrivial. 
This could lead to the failure of the $3$-realizability as follows.
\begin{Lemma}
    \label{lem:criterion}
    In the setting of Corollary \ref{relation of SW}, if $\SW([\tilde{\eta}],\mathfrak{s})\neq 0$ for any choice of $\xi$, then $\langle [\tau]\rangle \subset \MCG(M)$ is not $3$-realizable.  
\end{Lemma}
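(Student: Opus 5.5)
We argue by contradiction. Suppose $\langle[\tau]\rangle$ is $3$-realizable, and let $H_3\colon (B\Z/2)_3=\RP^3\to B\Diff(M)$ be a $3$-realization. The idea is to locate, inside the restriction of $H_3$ to $\RP^2$, a family over $S^2$ that must be trivial because it extends over a $3$-dimensional cell. By Proposition \ref{prop:vanish for trivial bundle} this family would then have vanishing $\SW$, contradicting the hypothesis.

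First we identify the restriction of $H_3$ to $\RP^2$ with the construction above. The restriction $H_3|_{\RP^1}$ classifies an $M$-bundle over $S^1$ whose monodromy is isotopic to $\tau$. After composing with a homotopy, which changes nothing up to bundle isomorphism, this bundle is the mapping torus $T\tau$. The $2$-cell of $\RP^2$ is attached by the degree-$2$ map $S^1\to\RP^1$. Hence extending $T\tau$ over $\RP^2$ amounts to choosing a trivialization of the pullback $T\tau^2$, which is exactly a choice of isotopy from $\id$ to $\tau^2$. Any two isotopies differ by a based loop $\xi$, so the isotopy coming from $H_3$ has the form $g_t=\xi_t f_t$ for some $\xi$. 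Consequently the pullback of $H_3|_{\RP^2}$ to $S^2$ is the family $\tilde M$ with based clutching function $\tilde\eta$ from Lemma \ref{lemma: depend on isotopy}. The one point needing care is that the isotopy classes of trivializations of $T\tau^2$ are a torsor over $\pi_1(\Diff(M),\id)$. This follows from the usual correspondence between extensions over a $2$-cell and null-homotopies of the attaching loop in $B\Diff(M)$.

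Next we use the $3$-cell. The composite $S^2\to\RP^2\hookrightarrow\RP^3$ extends over $D^3$: the universal cover $S^3\to\RP^3$ restricts to $S^2\to\RP^2$ on the $2$-skeleton, with the $2$-sphere bounding a $3$-cell upstairs. More simply, $\pi_2(\RP^3)=\pi_2(S^3)=0$, so the map $S^2\to\RP^3$ is null-homotopic. Therefore $H_3\circ(S^2\to\RP^3)$ is null-homotopic in $B\Diff(M)$. This means the bundle $\tilde M\to S^2$ with clutching $\tilde\eta$ is isomorphic to the trivial bundle $M\times S^2$, i.e.\ $[\tilde\eta]=0$ in $\pi_1(\Diff(M),\id)$. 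We also need the identification $\pi_2(B\Diff(M))\cong\pi_1(\Diff(M))$ to respect basepoints, so that free versus based homotopy causes no trouble. Since $\pi_1(B\Diff(M))$ acts on $\pi_2$ by conjugation, a free null-homotopy still implies that the based class is trivial.

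Finally, since $[\tilde\eta]=0$, Proposition \ref{Prop: group homomorphism} gives $\SW([\tilde\eta],\fr s)=0$; equivalently, Proposition \ref{prop:vanish for trivial bundle} applies to the trivial family with $\tilde{\fr s}$ extending $\fr s$, which exists by Lemma \ref{lemma:FSW spinc only on fiber}. This contradicts the hypothesis that $\SW([\tilde\eta],\fr s)\ne0$ for every $\xi$. The main obstacle is the first step: verifying carefully that every extension over $\RP^2$ arises from some isotopy $g_t=\xi_tf_t$, so that the hypothesis quantified over all $\xi$ really covers the bundle coming from $H_3$.
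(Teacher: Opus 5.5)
Your proof is correct and follows the same route as the paper: restrict the putative bundle over $\RP^3$ to $\RP^2$, pull it back to $S^2$, use $\pi_2(\RP^3)=0$ to conclude that the family with clutching function $\tilde\eta$ is trivial, and contradict the nonvanishing of $\SW([\tilde\eta],\mathfrak{s})$ via Proposition~\ref{prop:vanish for trivial bundle} (or the homomorphism property). You also make explicit two points the paper leaves implicit, both handled correctly: the isotopy coming from the realization has the form $g_t=\xi_t f_t$ with $\xi_t=g_tf_t^{-1}$, so the hypothesis over all $\xi$ applies; and a free null-homotopy of $S^2\to B\Diff(M)$ forces $[\tilde\eta]=0$.
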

\begin{proof}
    If there exists a smooth bundle $\tilde E \to \RP^3$ such that 
    \begin{gather*}
        \xymatrix{
            \tilde M \ar[r]\ar[d] & i^*\tilde E \ar[r]\ar[d] & \tilde{E}\ar[d] \\
            S^2 \ar[r] & \RP^2 \ar[r]^{i} & \RP^3,
        }
    \end{gather*}
    then $\tilde M \to S^2$ must be trivial since $\pi_2(\RP^3)=0$. 
    However, the nonvanishing of the family Seiberg--Witten invariant implies that $\tilde M \to S^2$ can not be trivial. This leads to a contradiction.
\end{proof}

Such a nonvanishing result for family Seiberg--Witten invariant is difficult to achieve in general, because we have no control over the loop $\xi$. 
However, 
we may use the conjugation symmetry together with mod $2$ rigidity to obtain such nonvanishing results. 
\section{Dehn twist along $(-2)$-sphere}
\label{sec:Dehn}
\subsection{Local model}
\subsubsection{Dehn twist}
Let us recall the construction of Dehn twist along a $(-2)$-sphere.
Let $M$ be a closed $4$-manifold and $S\subset M$ be an embedded $(-2)$-sphere. 
The tubular neighborhood $\nu(S)$ can be identified with the cotangent bundle $T^*S^2$ via an orientation-reversing diffeomorphism
\begin{equation*}
    \Phi \colon \Cl(\nu(S)) \xrightarrow{\cong} DT^*S^2,
\end{equation*}
where $\Cl$ denotes the closure.
Using the standard embedding $S^2 \subset \R^3$ and the standard metric to identify $T^*S^2$ with $TS^2$, 
we write 
\begin{equation}
    T^*S^2 = \{(q,p) \in \R^3 \times \R^3 \mid |q|=1, p\perp q \}.
\end{equation}
With respect to this metric , let $\lc$ denote the unit disk bundle.
The normalized geodesic flow away from the zero section is 
\begin{align*}
    \varphi_t \colon \lc - S^2 & \to \lc - S^2 \\
    (q,p) & \mapsto (\cos (t) q + \dfrac{1}{r} \sin (t)p, -r\sin(t)q + \cos (t)p),
\end{align*}
where $r = |p|$.
Let $\rho \colon [0,\infty) \to \R$ be a decreasing cut-off function such that $\rho(r) = \pi$ for $r < \delta$ and $\rho(r) = 0$ for $r>1-\delta$ for some $0<\delta <1$.
Then the local model of the Dehn twist is defined by 
\begin{equation}
    \label{eq:Dehn twist}
    \tau(q,p) = \begin{cases}
        \varphi_{\rho(r)}(q,p), & p\neq 0, \\
        (-q,0), & p =0. 
    \end{cases}
\end{equation}
Since $\varphi_{\pi}(q,p) = (-q,-p)$, we can extend it smoothly over the zero section by the antipodal map. Thus the above Dehn twist is well-defined and smooth. 
It is easy to see that the isotopy class of $\tau$ is independent of the choices. 

Using the orientation-reversing identification $\Cl(\nu(S)) \cong \lc$, we define the Dehn twist $\tau_S \colon M \to M$ to be 
\begin{equation}
    \tau_S(x) = \begin{cases}
        x, & x\in M-\Cl(\nu(S)), \\
        \tau(x), & x \in \Cl(\nu(S)) \cong \lc.
    \end{cases}
\end{equation} 
Since $\tau(q,p) = \id$ for $|p|>1-\delta$, $\tau_S$ is well-defined and smooth. 

\subsubsection{Isotopy of squared Dehn twist}
In fact, the isotopy class $[\tau_S]$ is an element of order $2$ in $\MCG(M)$. 
We will write an explicit isotopy from $\tau_S^2$ to $\id$ using Seidel's construction in \cite{seidel}.
Working in the local model, we have 
\begin{equation}
    \tau^2(q,p) = 
    \begin{cases}
        \varphi_{2\rho}(q,p), & p\neq 0, \\
        (q,0), & p = 0,
    \end{cases}
\end{equation}
where $\rho$ is short for $\rho(r)$. 
Hence $\tau^2$ is identity for $r<\delta$ and $r>1-\delta$ and $\tau^2$ preserves $r$.

Endow $T^*S^2$ with the standard symplectic form $\omega = \sum_{i=1}^3 \dd q_i \wedge \dd p_i$.
The geodesic flow $\varphi_t$ is then the Hamiltonian flow generated by the function $(q,p) \mapsto |p|$ on $T^*S^2 - S^2$.
Let $\beta$ denote the standard symplectic form on $S^2$ defined by $\beta(v)(X,Y) = \det(v,X,Y)$ for $v \in S^2 \subset \R^3$ and $X,Y \in T_vS^2 \subset \R^3$. The standard $\mathrm{SO}(3)$-action on $T^*S^2$ is given by:
\[A\cdot(q,p)=(Aq, Ap),\ \forall A\in \mathrm{SO}(3).\]
We then obtain an $SO(3)$-invariant family of symplectic forms $\omega_s \coloneqq \omega + s\pi^*\beta$ on $T^*S^2$, where $s\in \mathbb{R}$ is sufficiently small, and $\pi: T^*S^2\to S^2$ is the projection.

Next, we calculate the moment map for the standard $SO(3)$-action on $T^*S^2$.
\begin{Lemma}
    The moment map $\mu^s \colon T^*S^2 \to \fr{so}(3)^*$ of the standard action $SO(3) \to \mathrm{Symp}(T^*S^2,\omega_s)$ is 
    \[\mu^s(q,p)=sq+q\times p,\]
    where $\fr{so}(3)$ is identified with $\R^3$ via $\hat\xi  \mapsto \xi$
    such that $\hat\xi(v) = \xi \times v$, and $\mathfrak{so}(3)^*$ is identified with $\mathbb{R}^3$ using the standard inner product.
\end{Lemma}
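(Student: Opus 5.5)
The plan is to verify the defining identity of a moment map directly, for one fixed Lie algebra element at a time, and then check equivariance. Fix $\xi\in\R^3\cong\fr{so}(3)$. The infinitesimal action of $\hat\xi$ on $T^*S^2\subset\R^3\times\R^3$ is the vector field
\[
X_\xi(q,p)=(\xi\times q,\ \xi\times p),
\]
obtained by differentiating $(\exp(t\hat\xi)q,\exp(t\hat\xi)p)$ at $t=0$. The candidate Hamiltonian is $H_\xi(q,p)=\langle\mu^s(q,p),\xi\rangle=s\langle q,\xi\rangle+\langle q\times p,\xi\rangle$. I will show that $\iota_{X_\xi}\omega_s=\dd H_\xi$ on $T^*S^2$, with the convention matching $\omega=\sum \dd q_i\wedge \dd p_i$. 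Since $\omega_s=\omega+s\pi^*\beta$ is linear in $s$, this splits into an $s$-independent part and a part linear in $s$.

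For the $\omega$-part, I contract to get
\[
\iota_{X_\xi}\omega=\langle \xi\times q,\dd p\rangle-\langle \xi\times p,\dd q\rangle.
\]
Differentiating the other side gives
\[
\dd\langle q\times p,\xi\rangle=\langle \dd q\times p,\xi\rangle+\langle q\times \dd p,\xi\rangle=\langle p\times\xi,\dd q\rangle+\langle \xi\times q,\dd p\rangle,
\]
using cyclic invariance of the triple product. The two expressions agree. This is just the classical fact that angular momentum $q\times p$ is the moment map for the diagonal rotation action on $T^*\R^3$, restricted to the symplectic submanifold $T^*S^2$.

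For the $\beta$-part, evaluate on a tangent vector $(Y,Z)$ with $Y\perp q$:
\[
(\iota_{X_\xi}\pi^*\beta)(Y,Z)=\det(q,\xi\times q,Y)=\langle q\times(\xi\times q),Y\rangle=\langle \xi|q|^2-\langle q,\xi\rangle q,Y\rangle=\langle\xi,Y\rangle,
\]
since $|q|=1$ and $Y\perp q$. This is exactly $\dd\langle q,\xi\rangle(Y,Z)$. Adding $s$ times this identity to the $\omega$-identity gives $\iota_{X_\xi}\omega_s=\dd H_\xi$.

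Finally, I check $\mathrm{SO}(3)$-equivariance: $\mu^s(Aq,Ap)=sAq+Aq\times Ap=A\mu^s(q,p)$ for $A\in\mathrm{SO}(3)$, because rotations preserve cross products. Under the identification $\fr{so}(3)^*\cong\R^3$ via the inner product, the coadjoint action is ordinary rotation, so $\mu^s$ is equivariant. The only real hazard is the sign convention (whether $\iota_X\omega=\dd H$ or $-\dd H$, and the orientation of $\beta$). I will fix these by checking consistency with the stated fact that $|p|$ generates the geodesic flow $\varphi_t$. The rest is routine.
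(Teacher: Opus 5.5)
Your proposal is correct and is essentially the paper's proof. Both compute the fundamental vector field $(\xi\times q,\xi\times p)$ and verify $\dd\langle\mu^s,\xi\rangle=\iota_{X_\xi}\omega_s$ term by term, using the scalar triple product for the $\omega$-part and $q\times(\xi\times q)=\xi-\langle q,\xi\rangle q$ (with $|q|=1$ and $Y\perp q$) for the $\beta$-part. Your additional equivariance check does not appear in the paper's proof, though the paper uses equivariance in the next lemma. The convention $\iota_X\omega=\dd H$ you adopt is the one consistent with $|p|$ generating $\varphi_t$.
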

\begin{proof}
    For any $\xi\in\mathfrak{so}(3)\cong \mathbb{R}^3$, it generates a vector field $\xi_{T^*S^2}$ on $T^*S^2$ defined by:
    \[\xi_{T^*S^2}(q,p)\coloneqq\frac{\dd}{\dd t}\Big |_{t=0} \exp(t\xi)\cdot(q,p)\]
    
Elementary computation shows that $\xi_{T^*S^2}(q,p)=(\xi\times q, \xi\times p)$. We need to show that for any $\xi\in\mathfrak{so}(3)$,
    \[\dd\langle \mu^s, \xi\rangle=\iota_{\xi_{T^*S^2}}\omega_s\]
    Given any $(u,v)\in T_{(q,p)}T^*S^2$ (viewing them as vectors in $\R^3$), $q\cdot u=0$.
    
    The left hand side is:
    \begin{align*}
        \dd\langle\mu^s,\xi\rangle(u,v)&=\dd\langle sq+q\times p,\xi\rangle(u,v)\\
        &=\langle su+u\times p, \xi\rangle+\langle q\times v,\xi\rangle
    \end{align*}

    The right hand side is:
    \begin{align*}
        \iota_{\xi_{T^*S^2}}\omega_s(u,v)&=(\omega+s\pi^*\beta)((\xi\times q,\xi\times p),(u,v))\\
        &=(\xi\times q)\cdot v-(\xi\times p)\cdot u+sq\cdot ((\xi\times q)\times u)
    \end{align*}
    where 
    \[sq\cdot ((\xi\times q)\times u)=sq\cdot((\xi\cdot u)q-(q\cdot u)\xi)=s(\xi\cdot u)q\cdot q=s\xi\cdot u.\]
    Applying the scalar triple product identity yields the equality.
\end{proof}

Since $\mu^s$ is nowhere zero for $s\neq 0$, the Hamiltonian $|\mu^s|$ generates a circle action $\sigma^s_t$ on the whole $T^*S^2$. In fact, we can describe this action explicitly as follows.
\begin{Lemma}\label{lemma: flow is rotation}
    The flow $\sigma^s_t (s\neq 0)$ associated with the Hamiltonian $H=|\mu^s|$ preserves $\mu^s/|\mu^s|$ and acts on $(q,p)$ by rotating about the axis $\frac{\mu^s}{|\mu^s|}(q,p)$ by angle $t$.
\end{Lemma}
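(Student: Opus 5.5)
The plan is to use the general fact that if a Hamiltonian $H$ is a function of a moment map, $H = F\circ\mu$, then its flow is generated by the infinitesimal action of the Lie algebra element $\dd F(\mu)$. Concretely, I will show that at each point $x=(q,p)$ the Hamiltonian vector field of $H=|\mu^s|$ coincides with the fundamental vector field $\xi_{T^*S^2}$ for the constant element $\xi = n(x) := \mu^s(x)/|\mu^s(x)|$. I will then check that $n$ is invariant along the flow. Together these give the explicit description of the flow as a rotation about the fixed axis $n(x)$.

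First I compute $\dd H$. By the chain rule,
\[
\dd H_x = \bigl\langle \dd\mu^s_x(\cdot),\, n(x)\bigr\rangle .
\]
By the previous lemma, this equals $\dd\langle \mu^s, \xi\rangle_x$ for the frozen vector $\xi=n(x)$, which in turn equals $\iota_{\xi_{T^*S^2}}\omega_s$ at $x$. Since $\omega_s$ is nondegenerate for small $s$, the Hamiltonian vector field of $H$ at $x$ equals
\[
\xi_{T^*S^2}(x) = \bigl(n(x)\times q,\; n(x)\times p\bigr),
\]
which is the infinitesimal rotation about the axis $n(x)$, up to the sign convention for Hamiltonian vector fields used in the previous lemma. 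Here I use $\mu^s\neq 0$ for $s\neq0$: since $q\times p\perp q$, we have $|\mu^s|^2=s^2+|q\times p|^2>0$, so $H$ is smooth.

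Next I show that $\mu^s$ itself, not just $n$, is constant along the flow. The moment map is $SO(3)$-equivariant, since $\mu^s(Aq,Ap)=A\,\mu^s(q,p)$ by naturality of the cross product. Differentiating this equivariance gives
\[
\dd\mu^s\bigl(\xi_{T^*S^2}\bigr)=\xi\times\mu^s .
\]
With $\xi=n(x)$ parallel to $\mu^s(x)$, the right-hand side vanishes. Hence $\mu^s$ is conserved along the flow, and therefore so are $n$ and $H$.

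Because $n$ is conserved, the flow line through $x$ solves the autonomous linear ODE
\[
\dot y=\bigl(n(x)\times y_q,\; n(x)\times y_p\bigr)
\]
with a fixed axis. Its solution is $y(t)=\bigl(R_{n(x)}(t)\,q,\;R_{n(x)}(t)\,p\bigr)$, where $R_{n(x)}(t)$ is rotation by angle $t$ about $n(x)$. By uniqueness of ODE solutions, $\sigma^s_t(x)=R_{n(x)}(t)\cdot x$. In particular the flow is $2\pi$-periodic, giving the claimed circle action.

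The main obstacle is keeping sign and orientation conventions consistent: the convention $\iota_X\omega=\pm\dd H$, and whether the rotation is by $t$ or $-t$. I would fix these to agree with the convention of the previous lemma, so that the statement holds as written up to the orientation of the angle. I would also verify directly that $|\mu^s|$ never vanishes, as checked above, so that the flow is defined on all of $T^*S^2$, including the zero section.
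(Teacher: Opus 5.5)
Your proposal is correct and follows essentially the paper's argument. You identify $X_{|\mu^s|}$ with the fundamental vector field of the frozen element $\mu^s/|\mu^s|$, differentiate the $SO(3)$-equivariance of $\mu^s$ to get $\dd\mu^s(X_{|\mu^s|})=\xi\times\mu^s=0$, and solve the resulting linear ODE about a fixed axis. You also make explicit the chain-rule identification of the Hamiltonian vector field and the nonvanishing $|\mu^s|^2=s^2+|p|^2$, both of which the paper leaves implicit.
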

\begin{proof}
     Consider 
    \begin{align*}
        \xi \colon T^*S^2 & \to \fr{so}(3)^* \cong \fr{so}(3) \cong \R^3 \\
        (q,p) & \mapsto \dfrac{\mu^s(q,p)}{|\mu^s(q,p)|}.
    \end{align*}
    $\forall A\in\mathrm{SO}(3)$, we have $\mu^s(A\cdot (q,p))=A(\mu^s(q,p))$. 
    By taking differentials, we have
    \[\dd\mu^s(X_{|\mu^s|})=\dd\mu^s(\xi_{T^*S^2})=\xi\times\mu^s=0.\]
    Therefore, $\mu^s$ and $\xi={\mu^s}/{|\mu^s|}$ are invariant under $\sigma^s_t$. The Hamiltonian equation is 
    \[\begin{cases}
        \dot{q}=\xi \times q\\
        \dot{p}=\xi \times p
    \end{cases}\]
    whose solution is the rotation about the axis $\xi$.
\end{proof}

As $s\to 0$, $\sigma^s$ converges to $\varphi$ on compact subsets of $T^*S^2-S^2$. 
Therefore, after choosing a small constant $\epsilon>0$, 
\begin{equation}
    \label{eq:isotopy}
    h_t(q,p) = \begin{cases}
        \sigma^{\epsilon}_{4t\rho} (q,p), & 0\leq t\leq 1/2, \\
        \sigma^{2(1-t)\epsilon}_{2\rho} (q,p), & 1/2\leq t\leq 1
    \end{cases}
\end{equation}
provides a smooth isotopy from $\id$ to $\tau^2$. 
Meanwhile, it is easy to check that $h_t|_{\partial \lc}$ is identity for every $t$, therefore $h_t$ extends by the identity to an isotopy from $\id$ to $\tau_S^2$ on the whole manifold $M$. We continue to denote this extended isotopy by $h_t$ and refer to it as Seidel's isotopy. 

\begin{rmk}
    To construct the isotopy, the most naive idea would be to replace the phases in the formula for $\tau^2$ by $2(1-t)\rho$. 
    But this construction fails since the resulting map does not extend smoothly across the zero section when $0<t<1$.
\end{rmk}

\subsection{Two special loops of diffeomorphisms}
\label{sec:two loops}
In this subsection, we will compare two families over $S^2$. 
The first one is obtained from Seidel's isotopy. The other is the $A_1$-family in \cite{linFSW}. 

\subsubsection{Seidel's clutching function and generalized Dehn twist}
Recall that in Section \ref{sec:criterion}, we have discussed how to obtain a family over $S^2$ from an isotopy between identity and $\tau^2$. Now let us consider the family over $S^2$ obtained from Seidel's isotopy, which is denoted by $h_t$. Plugging the formulas above into the expression of the clutching function $\gamma_t=h_t\tau h_{t+1/2}^{-1}$, we get:
\[\gamma_t=\begin{cases}
    \sigma^\epsilon_{4t\rho}\sigma_{\rho}\sigma^{(1-2t)\epsilon}_{-2\rho} ,&0\leq t\leq 1/2,\\
    \sigma^{2(1-t)\epsilon}_{2\rho}\sigma_{-\rho}\sigma^\epsilon_{(2-4t)\rho}, &1/2\leq t\leq 1.
\end{cases}\] 
Here $\rho$ stands for the radial cutoff function $\rho(r)$. Note that $\gamma_0=\gamma_1=\sigma_\rho \sigma^\epsilon_{-2\rho}$, we get a loop based at $\id$ by composing $\gamma_0^{-1}$:
\[\tilde{\gamma}_t=\begin{cases}
    \sigma^\epsilon_{4t\rho}\sigma_{\rho}\sigma^{(1-2t)\epsilon}_{-2\rho}\sigma^\epsilon_{2\rho}\sigma_{-\rho}, &0\leq t\leq 1/2,\\
    \sigma^{2(1-t)\epsilon}_{2\rho}\sigma_{-\rho}\sigma^\epsilon_{(4-4t)\rho}\sigma_{-\rho}, &1/2\leq t\leq 1.
\end{cases}\]
We call $\tilde{\gamma}$ Seidel's clutching function. Note that $\tilde\gamma$ preserves $|p|$.   
\begin{Remark}
    The preservation of $|p|$ is an essential condition, since in dimension $3$ a pseudo-isotopy which restricts to identity at both ends may not be straightened into an isotopy. 
\end{Remark}

Next we consider the $A_1$-family.
In \cite{linFSW},  J. Lin studied a similar loop of diffeomorphisms called the generalized Dehn twist, which arises naturally as the clutching function for the simultaneous resolution of the $A_1$-family over $S^2$. 

\begin{Definition}[Generalized Dehn twist]
    Let $S$ be a $(-2)$-sphere in $M$. As in Section \ref{sec:Dehn}, fix an identification $\Cl(\nu(S))\cong DT^*S^2$. Let $R_v(\theta)$ be the rotation in $\mathbb{R}^3$ around axis $v$ by angle $\theta$.
    The \textbf{generalized Dehn twist} along $S$ is a loop $\gamma_S\colon S^1 \to \Diff^+(M)$ constructed below.
    \begin{enumerate}
        \item For $0\leq |p|\leq 1/2$, we define 
    \[\gamma_S(t)(q,p)\coloneqq(q, R_q(4\pi t)(p)), \ t\in [0,1].\]

        \item Let $\Gamma_t$ be the restriction of $\gamma_S(t)$ to $\{(q,p) \in \lc \mid  |p|=1/2\}\cong\mathbb{RP}^3$. Note that $\Gamma$ is null-homotopic in $\pi_1(\Diff^+(\mathbb{RP}^3))\cong \mathbb{Z}/2\times\mathbb{Z}/2$. 
    Pick any based null-homotopy $\Gamma^s_t\in\Diff^+(\mathbb{RP}^3), (s,t)\in [0,1]^2$ such that $\Gamma^0_t=\Gamma_t$, $\Gamma^1_t= \id$, $\Gamma^s_0=\Gamma^s_1=\id$. For $1/2\leq |p|\leq 1$, we define
    \[\gamma_S(t)(q,p)\coloneqq\Gamma^{2|p|-1}_t(q,p), \ t\in [0,1].\]
    \end{enumerate}
    By choosing $\Gamma^s_t$ carefully, we may assume that $\gamma_S$ is a smooth loop of diffeomorphisms supported in $\nu (S)$, then we extend it by identity to a loop of diffeomorphisms on $M$. 
\end{Definition}
\begin{rmk}
    In fact, the homotopy class $[\gamma_S]\in \pi_1(\Diff^+(M))$ does not depend on the auxiliary choices in the construction above, which will be shown in the next part.
\end{rmk}

\subsubsection{Classification of special loops}
In this part, we will classify the loops in $\pi_1(\Diff^+_{\partial}(\lc))$ preserving $|p|$. 
Recall that the unit cotangent bundle $ST^*S^2\cong \mathbb{RP}^3\cong \mathrm{SO}(3)$. By generalized Smale conjecture for $\mathbb{RP}^3$ \cite{bamler2019ricci,ketover2025smale}, we have
$$\Diff^+(\mathbb{RP}^3)\simeq \mathrm{Isom}^+(\mathbb{RP}^3)\cong\mathrm{SO}(3)\times\mathrm{SO}(3),$$
and $(A,B)\in \mathrm{SO}(3)\times\mathrm{SO}(3)$ acts on $\mathbb{RP}^3\cong \mathrm{SO}(3)$ by $C\mapsto ACB^{-1}$. 

We unify the notation as follows:
\begin{itemize}
    \item The zero section of $DT^*S^2$ is denoted by $Z$.
    \item $G\subset \Diff^+_\partial(DT^*S^2)$ denotes the group of orientation-preserving diffeomorphisms of $DT^*S^2$ which preserve $|p|$, restrict to the identity near the boundary and fix $Z$ pointwise. Note that $\tilde{\gamma}$ and $ \gamma_S$ both represent elements in $\pi_1(G)$.  
    \item $H\subset G$ denotes the subgroup of diffeomorphisms in $G$ which are identities near the zero section. 
\end{itemize}

We have a natural decomposition 
\[TT^*S^2|_Z\cong TZ\oplus NZ\cong TS^2\oplus T^*S^2\]
Denote the projection onto the two summands by $\pi^{\parallel}$ and $\pi^{\perp}$ respectively. 
Given any diffeomorphism $f\in G$ and $(q,0)\in Z$,  write 
\[\dd f|_{(q,0)}=\begin{pmatrix}
    \id_{T_q Z} & *\\
    0 & \dd^\perp f|_{(q,0)}
\end{pmatrix}\]
with respect to the above decomposition, where $\dd^\perp$ means the $NZ$ component of the differential.  

\begin{Definition}[Winding number]
    Given any loop of diffeomorphisms $[\alpha]\in \pi_1(G,\id)$,  consider the composition:
    \[S^1\to \Diff^+_\partial(DT^*S^2)\to \Gamma(S^2, \Aut(TT^*S^2\Big|_{Z}) )\to \Gamma(S^2,\Aut(T^*S^2)) \]
    \[t\mapsto \alpha_t\mapsto \dd\alpha_t\Big|_{Z}\mapsto \dd^{\perp}\alpha_t\Big|_{Z}.\]
    Note that $\Gamma(S^2,\Aut(T^*S^2)) \simeq C^\infty(S^2, S^1)\simeq S^1$, the above map defines an element in $\pi_1(S^1) \cong \Z$.
    This number is called the \textbf{winding number} of $\alpha$, denoted by $w(\alpha)$.
    
    Since $w(\alpha)$ depends only on the homotopy class $[\alpha]\in \pi_1(G)$, we obtain a group homomorphism:
    \begin{align*}
        w: \pi_1(G) & \to \mathbb{Z},\\
        [\alpha] & \mapsto w([\alpha]).
    \end{align*}
\end{Definition}

The following observations together imply that loops in $G$ can be homotoped to standard forms.
\begin{Lemma}\label{lemma: trivial criterion}
    $\pi_1( H,\id)$ is trivial.
\end{Lemma}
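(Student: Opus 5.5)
Elements of $H$ are orientation-preserving diffeomorphisms of $\lc$ that preserve $r=|p|$, are the identity near the boundary $\{r=1\}$, and are the identity near the zero section $Z$. Such an $f$ preserves every level set $\{r=c\}\cong ST^*S^2\cong\mathbb{RP}^3$. Each $f\in H$ is the identity on $\{r<\delta_f\}\cup\{r>1-\delta_f\}$ for some $\delta_f>0$. The plan is therefore to identify $H$, up to homotopy, with a space of paths in $\Diff^+(\mathbb{RP}^3)$ with both ends at $\id$, and then apply the generalized Smale conjecture.

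The restriction of $f$ to $\{0<r<1\}\cong \mathbb{RP}^3\times(0,1)$ has the form $(x,r)\mapsto (f_r(x),r)$. Here $r\mapsto f_r$ is a smooth path in $\Diff^+(\mathbb{RP}^3)$ that equals $\id$ for $r$ near $0$ and near $1$. Conversely, any such path defines an element of $H$, and it extends by the identity across $Z$ and near $\partial\lc$. This gives an identification of $H$ with the space $\mathcal{P}$ of smooth paths $[0,1]\to\Diff^+(\mathbb{RP}^3)$ that are constant equal to $\id$ near both endpoints. The topologies match because in both cases we use the $C^\infty$ topology on compactly supported objects.

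Now $\mathcal{P}$ is weakly homotopy equivalent to the based loop space $\Omega\Diff^+(\mathbb{RP}^3)$. To see this, reparametrize the interval and smooth by a standard collar argument, so that the condition ``identity near the endpoints'' can be relaxed to ``identity at the endpoints.'' Hence
\[
\pi_1(H)\cong \pi_1(\Omega \Diff^+(\mathbb{RP}^3))\cong \pi_2(\Diff^+(\mathbb{RP}^3)).
\]
By the generalized Smale conjecture \cite{bamler2019ricci,ketover2025smale},
\[
\pi_2(\Diff^+(\mathbb{RP}^3))\cong \pi_2(\mathrm{SO}(3)\times\mathrm{SO}(3))=0,
\]
since $\pi_2$ of any Lie group vanishes. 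Therefore $\pi_1(H,\id)$ is trivial.

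The main obstacle is making the identification $H\simeq\Omega\Diff^+(\mathbb{RP}^3)$ rigorous. One must check that the smoothness constraints near $r=0$ cause no trouble; they do not, precisely because elements of $H$ are the identity on a whole neighborhood of $Z$, so no compatibility with the collapse $ST^*S^2\to Z$ is needed. One must also check that the germ-type condition ``identity near the ends'' does not change the weak homotopy type. I would handle this second point by exhibiting $H$ as a filtered colimit over $\delta$ of spaces $H_\delta$, each homotopy equivalent to $\Omega\Diff^+(\mathbb{RP}^3)$, with inclusions that are homotopy equivalences. Since a based sphere map into $H$ is compact and therefore lands in some $H_\delta$, this computes $\pi_1(H)$.
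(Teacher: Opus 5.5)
Your argument is essentially the paper's. A loop $\alpha$ in $H$ gives, via $(t,r)\mapsto\alpha_t|_{\{|p|=r\}}$, a map $[0,1]^2/\partial[0,1]^2\cong S^2\to\Diff^+(\RP^3)\simeq\mathrm{SO}(3)\times\mathrm{SO}(3)$, which is null because $\pi_2$ vanishes; your $H\simeq\Omega\Diff^+(\RP^3)$ just repackages this.

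One side claim is false in the $C^\infty$ topology: a compact family in $H$ need not lie in a single $H_\delta$. For example, take $(q,p)\mapsto(q,R_q(\theta_s(|p|))p)$ with $\theta_s(r)=e^{-1/s}\beta(r/s)$, $\theta_0=0$, and $\beta$ a bump supported in $[\tfrac12,1]$. This family is continuous in $s\in[0,\tfrac12]$, while its support creeps into $Z$. The error is harmless, though. Precompose the radial parameter with smooth maps $\phi_u\colon[0,1]\to[0,1]$ such that $\phi_0=\id$, $\phi_u(r)\le r$ near $0$, $\phi_u(r)\ge r$ near $1$, and $\phi_1$ collapses neighborhoods of $0$ and $1$ onto the endpoints. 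This deforms any compact family within $H$ into some $H_\delta$, which is what your colimit argument needs.
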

\begin{proof}
    For any loop $[\alpha]\in\pi_1(H,\id)$ and any fixed $t\in [0,1]$, since $\alpha_t$ preserves $|p|$ and is identity near $Z$ and the boundary, we can view $\alpha_t$ as a loop of diffeomorphisms of $ST^*S^2\cong\mathbb{RP}^3$ based at $\id$.  Moreover,  $\alpha_0=\alpha_1=\id$, so the loop $\alpha$ can be viewed as a map $$S^2\to \Diff^+(\mathbb{RP}^3)\simeq\mathrm{SO}(3)\times \mathrm{
    SO}(3)$$
    Since $\pi_2(\mathrm{SO}(3))=0$, this map is nullhomotopic. Therefore,  $\alpha$ is also nullhomotopic in $H$.
\end{proof}

\begin{Lemma}\label{lemma: trivial differential}
    For any loop of diffeomorphisms $[\alpha]\in\pi_1(G,\id)$, if the normal derivatives  $\dd^\perp\alpha_t|_Z$ are identities at any point and any time, then $\alpha_t$ can be homotoped to a loop in $H$, which is an identity near $Z$.
\end{Lemma}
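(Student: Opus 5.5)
The approach is a linearization (Alexander-trick-type) argument near the zero section $Z$, carried out uniformly in $t$ and compatibly with the constraint that diffeomorphisms preserve $|p|$.

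\emph{Step 1: straighten each $\alpha_t$ to its fiberwise linearization near $Z$.} Since every $\alpha_t$ fixes $Z$ pointwise and preserves $|p|$, it maps each sphere bundle $\{|p|=r\}$ to itself. For $\lambda\in(0,1]$ let $m_\lambda(q,p)=(q,\lambda p)$ and consider the conjugates $\alpha^\lambda_t := m_\lambda^{-1}\circ\alpha_t\circ m_\lambda$, defined on $\{|p|\le 1/\lambda\}\cap DT^*S^2$. These still preserve $|p|$ and fix $Z$. As $\lambda\to 0$, $\alpha^\lambda_t$ converges smoothly on compact sets, uniformly in $t$, to the fiberwise linear map $(q,p)\mapsto (q,\dd^\perp\alpha_t|_{(q,0)}(p))$. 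Here we use that $\alpha_t$ fixes $Z$ pointwise, so the tangential block of $\dd\alpha_t|_Z$ is the identity; the off-diagonal block $*$ disappears in the limit because its contribution is $O(\lambda)$ after rescaling. By hypothesis this limit is the identity.

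\emph{Step 2: assemble the rescalings into a homotopy.} Using a cutoff in $|p|$, I build a two-parameter family $\alpha^{s}_t$, $s\in[0,1]$, with $\alpha^0_t=\alpha_t$. For each $s$, $\alpha^s_t$ agrees with $\alpha_t$ outside a small tube $\{|p|\le\varepsilon\}$. Inside $\{|p|\le \varepsilon\}$ it is the radially interpolated map $(q,p)\mapsto m_{\lambda(s,|p|)}\circ\alpha_t\circ m_{\lambda(s,|p|)}^{-1}$, suitably arranged so that it acts as the identity on $\{|p|\le\varepsilon/2\}$ when $s=1$.

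Cleaner, this can be phrased level-set by level-set, using that $|p|$ is preserved. Write $\alpha_t$ on the sphere bundle $\{|p|=r\}\cong\mathbb{RP}^3$ as a family $\beta_t(r)\in\Diff^+(\mathbb{RP}^3)$ (rescaled to the unit sphere bundle), for $r\in(0,1]$. The hypothesis plus smoothness at $Z$ says that $\beta_t(r)\to\id$ as $r\to0$, in $C^\infty$ and uniformly in $t$, with the convergence extending smoothly to $r=0$. The homotopy is then reparametrization in $r$:
\[
\alpha^s_t|_{|p|=r}=\beta_t(\chi_s(r)),
\]
where $\chi_s\colon[0,1]\to[0,1]$ is a family of smooth nondecreasing functions with $\chi_0=\id$, $\chi_s(r)=r$ near $r=1$, and $\chi_1\equiv0$ near $0$. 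Then $\alpha^1_t$ is the identity near $Z$, every $\alpha^s_t$ is the identity near $\partial DT^*S^2$, and $\alpha^s_0=\alpha^s_1=\id$ because $\beta_0=\beta_1=\id$. So this is a based homotopy in $G$ ending in $H$.

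\emph{Main obstacle.} The difficult point is smoothness of $\alpha^s_t$ across $Z$ for intermediate $s$, because the polar description $r\mapsto\beta_t(r)$ is singular at $r=0$. Each $\alpha^s_t$ must also remain a diffeomorphism that extends smoothly over $Z$.

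The plan is to control this by Taylor expansion. Since $\dd^\perp\alpha_t|_Z=\id$, we can write
\[
\alpha_t(q,p)=\bigl(q+A_t(q,p),\,p+B_t(q,p)\bigr)
\]
with $A_t=O(|p|)$ and $B_t=O(|p|^2)$, all smooth in $(q,p,t)$. The reparametrized map in the region where $\chi_s(r)=c\,r$ near $0$ is the conjugate by $m_c$, which is smooth. So I choose each $\chi_s$ to be linear (slope $c_s>0$) near $0$ for $s<1$, with $c_s\to0$ as $s\to1$.

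The $|p|$-preserving property, together with the $O(|p|)$ and $O(|p|^2)$ bounds, gives uniform $C^\infty$ control of the conjugates $m_c^{-1}\alpha_t m_c$ as $c\to0$, and they converge to $\id$. That yields continuity of the family at $s=1$. Invertibility is automatic, because each level set is mapped diffeomorphically onto itself.
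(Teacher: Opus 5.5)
Your proposal is correct. Its core is the same as the paper's: your conjugate $m_c^{-1}\circ\alpha_t\circ m_c$ is exactly the paper's rescaling $\phi^s=(\phi_1(q,sp),\tfrac1s\phi_2(q,sp))$. In both arguments the hypothesis $\dd^\perp\alpha_t|_Z=\id$ is used for the same purpose, namely to make this family extend smoothly to $c=0$ with value $\id$.

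Where you differ is the localization near $Z$.
\begin{itemize}
\item The paper differentiates the rescaling isotopy to get a time-dependent vector field $X^s$, cuts it off to $\chi X^s$, integrates to a flow $\psi^s$, and uses the isotopy $\phi\circ(\psi^s)^{-1}$. There, $|p|$-preservation is needed only to ensure that $\psi^1=\phi$ on $\{|p|\le\kappa\}$ and that $\psi^s$ stays in $G$.
\item You instead let the rescaling parameter vary with the radius. Your level-set formula is $\alpha^s_t(q,p)=m_c^{-1}\alpha_t m_c(q,p)$ with $c=\chi_s(|p|)/|p|$.
\end{itemize}

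Your map is a diffeomorphism only because each $m_c^{-1}\alpha_t m_c$ preserves every level set of $|p|$. For the same reason, its smooth inverse is the same construction applied to $\alpha_t^{-1}$. So level-set preservation is used more essentially in your argument than in the paper's, where flows are automatically diffeomorphisms. What you gain is an explicit closed formula with no ODE to integrate. What the paper's cutoff-of-the-generator technique gains is portability beyond the radial setting.

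One clean choice removes your separate worry about slopes $c_s\to 0$. Take $\chi_s(r)=r\bigl(1-s\nu(r)\bigr)$, with $\nu$ a cutoff equal to $1$ near $0$ and $0$ for $r\ge 2\kappa$. Then $c=1-s\nu(|p|)$ is jointly smooth in $(s,p)$ and constant near $Z$. Smoothness across $Z$ and at $s=1$ then both reduce to smoothness of $(c,q,p)\mapsto m_c^{-1}\alpha_t m_c(q,p)$ down to $c=0$, which is the same fact the paper uses for $\phi^s$ at $s=0$.

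Also, the first formula in your Step 2 has the conjugation in the opposite order from Step 1. The level-set version is the one that works.
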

\begin{proof}
    The following construction is actually very general, but we only consider the case of $Z\subset T^*S^2$ here. For any $\phi\in G$ whose normal differential is identity on $Z$, write \

\[\phi(q,p)=(\phi_1(q,p), \phi_2(q,p)).\]
Define 
\[\phi^s(q,p)=\begin{cases}
    (\phi_1(q,sp), \frac{1}{s}\phi_2(q,sp)), & 0< s\leq 1,\\
    \id_{T^*S^2}, & s=0.
\end{cases}\]
Since $\dd^\perp\phi_t|_Z$ are identities, $\phi^s$ is smooth, preserves $|p|$ and $\phi^0=\id_{T^*S^2}$,  $\phi^1=\phi$. Let $X^s$ be the time-dependent vector field generating $\phi^s$. Consider a smooth cutoff function $\chi: DT^*S^2\to [0,1]$ such that for some small $0<\kappa<< 1$
\[\chi(q,p)=\begin{cases}
    1, &|p|\leq \kappa,\\
    0, & |p|\geq 2\kappa.
\end{cases}\]
Let $Y^s=\chi X^s$, and let $\psi^s$ be the flow generated by $Y^s$. Then $\psi^s$ is supported in $|p|\leq 2\kappa$,  preserves $|p|$ and is identity for $s=0$ while equal to $\phi$ for $s=1, |p|\leq \kappa$ . Then $\phi\circ (\psi^s)^{-1}$ is an isotopy in $G$ from $\phi$ to $\phi\circ(\psi^1)^{-1}$. Note that $\phi\circ (\psi^1)^{-1}\in H$. An $S^1$-parametric version of this construction yields the conclusion.
\end{proof}

\begin{Lemma}\label{lemma: standard form}
    Any class $[\alpha]\in \pi_1(G)$ with $w([\alpha])=n\in\mathbb{Z}$ can be represented by a loop $\alpha: S^1\to G$ based at $\id$ such that $\alpha$ restricts to the standard form
    \[\alpha_t(q,p)=(q,R_q(2n\pi t)(p) )\]
    near the zero section $Z$.
\end{Lemma}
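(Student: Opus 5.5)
The idea is to multiply $\alpha$ by an explicit loop $\beta$ in $G$ that has the standard form near $Z$ with winding number $n$. The product $\alpha\beta^{-1}$ then has winding number zero, and we straighten its normal derivative to the identity. Lemma \ref{lemma: trivial differential} then pushes it into $H$, and Lemma \ref{lemma: trivial criterion} shows it is trivial there.

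First, construct $\beta$. Near $Z$, say for $|p|\le 1/2$, set $\beta_t(q,p)=(q,R_q(2n\pi t)(p))$. This preserves $|p|$ and fixes $Z$ pointwise. On the shell $|p|=1/2\cong\mathbb{RP}^3$ it restricts to a loop $\Gamma^{(n)}_t$ in $\Diff^+(\mathbb{RP}^3)$. Under $\mathbb{RP}^3\cong\mathrm{SO}(3)$ this loop is right multiplication by a rotation about a fixed axis, so its class in $\pi_1(\Diff^+(\mathbb{RP}^3))\cong\Z/2\times\Z/2$ is $(0,n \bmod 2)$. This class need not vanish when $n$ is odd. So on the collar $1/2\le|p|\le1$ we cannot simply null-homotope $\Gamma^{(n)}$ as in the definition of $\gamma_S$ (which is the case $n=2$). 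Instead we take the radial interpolation to be a path of loops from $\Gamma^{(n)}$ to the restriction of $\alpha$ at $|p|=1$.

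The cleaner way to organize this is therefore the following. After the normal-derivative step below, we homotope $\alpha$ itself so that near $Z$ it agrees with the standard form, rather than building $\beta$ separately.

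Second, the normal-derivative step. The map $t\mapsto \dd^\perp\alpha_t|_Z$ is a loop in $\Gamma(S^2,\Aut(T^*S^2))$. Because each $\alpha_t$ preserves $|p|$, this loop lies in the unitary gauge group $C^\infty(S^2,S^1)$, which deformation retracts to the constant maps $S^1$. By definition of $w$, the loop is therefore homotopic, through based loops, to $t\mapsto e^{2\pi i n t}$, that is, to $\dd^\perp$ of the standard form. Let $u_{t,s}\colon S^2\to S^1$ be such a homotopy. It is a two-parameter family with $u_{t,0}=\dd^\perp\alpha_t|_Z$, $u_{t,1}=e^{2\pi i n t}$, and $u_{0,s}=u_{1,s}=1$. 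Define $\theta_{t,s}=u_{t,s}\,u_{t,0}^{-1}$, a family of fiberwise rotations of $T^*S^2$ over $Z$. Extend it to diffeomorphisms $\Theta_{t,s}(q,p)=(q,R_q(\chi(|p|)\theta_{t,s}(q))p)$, where $\chi$ is a cutoff equal to $1$ near $0$ and $0$ near $|p|\ge\kappa$, and we use a continuous logarithm of $\theta$.

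The main obstacle is that a logarithm of $\theta_{t,s}$ exists only for a contractible family. Here the parameter domain is $[0,1]^2$ with $\theta=1$ on $s=0$ and on the vertical edges, and $S^2$ is simply connected. So the map $S^2\times[0,1]^2\to S^1$ lifts to $\R$, normalized to $0$ on $s=0$. The lift stays $0$ on the vertical edges, since those edges are connected to the $s=0$ edge. Thus $\Theta_{t,s}$ is well defined, preserves $|p|$, fixes $Z$, and is based.

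Third, conclude. The family $\alpha_t\circ\Theta_{t,s}$ is a based homotopy in $G$ from $\alpha$ to a loop $\alpha'$ with $\dd^\perp\alpha'_t|_Z=R_q(2n\pi t)$. Let $\sigma$ denote the standard-form loop near $Z$ extended by a cutoff in the same way, namely $(q,p)\mapsto(q,R_q(\chi(|p|)2n\pi t)p)$. This is a loop in $G$ because $R_q(2n\pi)=\id$. The composite $\alpha'\circ\sigma^{-1}$ has identity normal derivative. Apply the construction in the proof of Lemma \ref{lemma: trivial differential} to it, but do not discard it into $H$. Instead, that construction gives an isotopy, supported near $Z$, making $\alpha'\sigma^{-1}$ the identity on $|p|\le\kappa$. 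Composing that homotopy with $\sigma$ yields a loop homotopic to $\alpha$ which equals $\sigma_t$, the standard form, for $|p|\le\kappa$. Finally, rescale $|p|$ radially so that "near $Z$" means the desired neighborhood.
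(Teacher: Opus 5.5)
Your normal-derivative step is correct and is essentially the paper's first step. The paper lifts $C_t=B_tA_t^{-1}$ to an angle $\theta(t,q)$ and composes with the cut-off rotation by $s\lambda(|p|)\theta(t,q)$; your two-parameter lift of $u_{t,s}u_{t,0}^{-1}$ does the same job.

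The gap is in the third step. The map $\sigma_t(q,p)=(q,R_q(2n\pi t\,\chi(|p|))p)$ is not a loop in $G$. At $t=1$ it equals $\sigma_1(q,p)=(q,R_q(2n\pi\chi(|p|))p)$, which is a nontrivial fibered twist on the shell where $0<\chi<1$. The identity $R_q(2n\pi)=\id$ only helps where $\chi\equiv 1$.

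Your own first paragraph shows this cannot be patched by a cleverer cut-off. For odd $n$, the restriction of the standard form to a shell $|p|=r$ is essential in $\pi_1(\Diff^+(\mathbb{RP}^3))$. For a loop in $G$, these shell restrictions are homotopic to the constant loop at $|p|=1$. Hence no loop in $G$ can agree with the standard form near $Z$ when $n$ is odd.

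Consequently $t\mapsto\alpha'_t\sigma_t^{-1}$ is a path from $\id$ to $\sigma_1^{-1}$, not a loop. The rescaling-and-cutoff isotopy of Lemma~\ref{lemma: trivial differential}, applied to this path, moves the $t=1$ end as soon as its support meets the shell where $0<\chi<1$. With your choices it does meet it: the rescaling cutoff is supported in $|p|\le 2\kappa$, while $\chi$ already vanishes for $|p|\ge\kappa$. So what you produce is neither a based homotopy of loops nor equal to the standard form on all of $|p|\le\kappa$.

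The repair, which is what the paper does, is never to extend the standard form beyond a small neighborhood of $Z$.
On $U=\{|p|<\kappa\}$, the map $L_t(q,p)=(q,R_q(2n\pi t)p)$ is a genuine loop of diffeomorphisms of $U$. Since $\alpha'_t$ preserves $|p|$, the composite $\delta_t=L_t^{-1}\circ\alpha'_t$ is a based loop of diffeomorphisms of $U$ with $\dd^\perp\delta_t|_Z=\id$.

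The rescaled maps $\delta^s_t$ only involve $\delta_t(q,sp)$ with $s\le 1$, so they live on $U$. Cut off their generating vector fields inside $U$. These vector fields are tangent to the shells $|p|=\mathrm{const}$, so trajectories starting in $U'=\{|p|<\kappa/2\}$ stay where the cutoff equals $1$. This gives a loop $\varphi^s_t$ in $G$ with $\varphi^0_t=\id$ and $\varphi^1_t=\delta_t$ on $U'$. It is based because $\delta_0=\delta_1=\id$.

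Then $\alpha'_t\circ(\varphi^s_t)^{-1}$ is a based homotopy from $\alpha'$ to a loop that equals $L_t$ on $U'$. In your notation, the fix amounts to choosing the rescaling cutoff supported where $\chi\equiv 1$; the extension $\sigma$ is then never needed.
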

\begin{proof}
    Let $\alpha$ be an arbitrary loop in $G$ based at identity with $w(\alpha)=n$. Let $A_t(q)\coloneqq\dd^\perp\alpha_t\Big|_{(q,0)}$ . Then $A$ is a loop in $\Gamma(S^2, \Aut(T^*S^2))\simeq S^1$ with winding number $n$. 
    
    Let $B_t(q)=R_q(2n\pi t)$ be the standard loop with winding number $n$ and let $C_t=B_t A_t^{-1}$. Since $C_t$ has the winding number $0$, we can write $C_t(q)=R_q(\theta(t,q))$ for some smooth function $\theta:[0,1]\times S^2\to \mathbb{R}$ with $\theta(0,q)=\theta(1,q)=0$. 
    
    Choose a smooth cutoff function $\lambda(|p|)$ which is $1$ for $|p|\leq \kappa$ and $0$ for $|p|\geq 2\kappa$. Define 
    \[\beta_t^s(q,p)\coloneqq(q, R_q(s\lambda(|p|)\theta(t,q))(p)),\]
    then $\beta^s_t\in G$, $\beta^0_t\equiv \id$, $\beta^s_0=\beta^s_1=\id$ and $\dd^\perp\beta^1_t|_Z=C_t$. Define $\alpha'_t\coloneqq\beta^1_t\circ\alpha_t$. Then $[\alpha']=[\alpha]\in \pi_1(G)$ and $\dd^\perp\alpha'_t|_Z=B_t$. 
    Let $L$ be the standard loop on $U= \{(q,p)\in DT^*S^2 \mid |p|< \kappa\}$ with
    \[L_t(q,p)=(q,R_q(2n\pi t)(p) ).\]
     Consider the loop $\delta_t\coloneqq L_t^{-1}\circ\alpha'_t$ on $U$, then $\dd^\perp\delta_t|_Z=\id$. Again by the rescaling and cut-off argument in Lemma \ref{lemma: trivial differential}, we get a $S^1$-parameter flow $\varphi^s_t\in G$ supported on $U$ such that $\varphi^0_t=\id$, $\varphi^1_t|_{U'}=\delta_t|_{U'}$ on the smaller neighborhood $U'=\{(q,p) \in U \mid |p|<\kappa/2\}$. Finally, let $\alpha''_t\coloneqq\alpha_t'\circ(\varphi^1_t)^{-1}$. Then $[\alpha'']=[\alpha]\in \pi_1(G)$ and $\alpha''_t|_{U'}=L_t|_{U'}$.
\end{proof}
\begin{Proposition}
    Half of the winding number gives an isomorphism
    \[w/2: \pi_1(G,\id)\cong \mathbb{Z},\  [\alpha]\mapsto w([\alpha])/2\]
    \end{Proposition}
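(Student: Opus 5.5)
The plan is to show three things in turn: $w$ takes only even values on $\pi_1(G)$, $w$ is injective, and the value $2$ is attained. Together these give the isomorphism $w/2\colon \pi_1(G)\cong\Z$.

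\emph{Injectivity.} Let $[\alpha]\in\pi_1(G)$ with $w([\alpha])=0$.
\begin{itemize}
\item By Lemma \ref{lemma: standard form} with $n=0$, $\alpha$ is homotopic to a loop that is the identity near $Z$.
\item Such a loop lies in $H$, because it preserves $|p|$, is the identity near $\partial DT^*S^2$, and is the identity near $Z$.
\item By Lemma \ref{lemma: trivial criterion}, $\pi_1(H)$ is trivial, so $[\alpha]=0$ in $\pi_1(G)$.
\end{itemize}
Since $w$ is a homomorphism, it is injective. A cleaner route is to apply Lemma \ref{lemma: trivial differential} directly after homotoping the normal derivative to the identity, as in the first half of the proof of Lemma \ref{lemma: standard form}.

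\emph{Evenness.} Suppose $w([\alpha])=n$. By Lemma \ref{lemma: standard form}, $\alpha_t(q,p)=(q,R_q(2n\pi t)p)$ for $|p|<\kappa'$.
\begin{itemize}
\item Restrict each $\alpha_t$ to the level set $\{|p|=r\}\cong ST^*S^2\cong\mathbb{RP}^3$. As $r$ runs from a small value up to $1$, this gives a homotopy of loops in $\Diff^+(\mathbb{RP}^3)$.
\item The endpoints of this homotopy are the loop $t\mapsto(\text{fiberwise rotation by }2n\pi t)$ and the constant loop $\id$ (at $r=1$).
\item Identify $ST^*S^2\cong\mathrm{SO}(3)$ by sending $(q,p/|p|)$ to the frame $(q,\,p/|p|,\,q\times p/|p|)$. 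Under this identification, fiberwise rotation about $q$ is right multiplication by a rotation about the first axis.
\item Hence the loop of rotations by $2n\pi t$ corresponds to $(1,R(2n\pi t))$ in $\mathrm{SO}(3)\times\mathrm{SO}(3)$. Its class in $\pi_1(\mathrm{SO}(3)\times\mathrm{SO}(3))\cong\Z/2\times\Z/2$ is $(0,n\bmod 2)$.
\end{itemize}
By the generalized Smale conjecture the inclusion of isometries is a homotopy equivalence, so this class must vanish. Therefore $n$ is even.

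\emph{Surjectivity.} The generalized Dehn twist $\gamma_S$ lies in $G$, and near $Z$ it equals $(q,R_q(4\pi t)p)$, so $w(\gamma_S)=2$. This is exactly why $\Gamma$ was null-homotopic, consistent with the previous step. Hence $w/2$ is onto $\Z$.

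\emph{Expected obstacle.} The main subtle point is the evenness step: the identification of the fiber rotation with a one-sided $\mathrm{SO}(3)$ factor must be checked carefully. One also needs the winding number of $R_q(\theta)$ as a section of $\Aut(T^*S^2)\simeq S^1$ to match the rotation angle with no extra factor. Since rotation by $2\pi$ in the $\mathrm{SO}(2)$-fiber generates $\pi_1(S^1)$, this matching is straightforward.
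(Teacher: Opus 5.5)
Your proof is correct and follows the paper's argument: evenness from the standard form of Lemma~\ref{lemma: standard form} together with the nontriviality of the fiber-rotation loop in $\Diff^+(\mathbb{RP}^3)$, injectivity from Lemma~\ref{lemma: standard form} with $n=0$ plus Lemma~\ref{lemma: trivial criterion}, and surjectivity by exhibiting a loop of winding number $2$. The differences are minor. You spell out why the rotation loop is nontrivial, via the frame identification $ST^*S^2\cong\mathrm{SO}(3)$. You also use $\gamma_S$ rather than the paper's $\tilde\gamma$ for surjectivity, which avoids relying on the later computation $w(\tilde\gamma)=2$.
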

\begin{proof}
    Since $(q,p)\mapsto (q, R_q(2\pi t)(p))$ for fixed $|p|>0$ is a nontrivial loop in $\Diff^+(\mathbb{RP}^3)$, by Lemma \ref{lemma: standard form},  any loop $\alpha$ in $G$ must have even winding number. Moreover, if $\alpha$ has zero winding number, then by Lemma \ref{lemma: standard form}  and Lemma \ref{lemma: trivial criterion}, $\alpha$ is null-homotopic in $G$. So $w/2$ is an injective homomorphism.  Since $w([\tilde{\gamma}])=2$, it's also surjective.
\end{proof}

\subsubsection{Winding number of the two special loops}
Now let us compute the winding number of our clutching function $\tilde{\gamma}$ and compare it to the generalized Dehn twist. 

Near the zero section, $\rho(|p|)=\pi$. So the formula of $\tilde{\gamma}$ simplifies to
\[\tilde{\gamma}_t=\begin{cases}
    \sigma^\epsilon_{4\pi t}, &0\leq t\leq  1/2,\\
    \sigma_{\pi}\sigma^\epsilon_{-4\pi t}\sigma_\pi, &1/2\leq t\leq 1
\end{cases}\]
for $|p|<\delta$.
\begin{Lemma}{\ }
\begin{enumerate}
    \item $\dd^\perp\sigma^\epsilon_t\Big|_{(q,0)}=R_{q}(t)$,\\
    \item     $\dd^\perp\tilde{\gamma}_t\Big|_{(q,0)}=
    R_{q}(4\pi t), \ 0\leq t\leq 1$.
\end{enumerate}
In particular, the winding number of $\tilde{\gamma}$ is equal to 2.
\end{Lemma}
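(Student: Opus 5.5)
Part (1) is a direct computation at the zero section $Z$. At a point $(q,0)$ we have $\mu^\epsilon(q,0)=\epsilon q$, so $\mu^\epsilon/|\mu^\epsilon|=q\,\mathrm{sign}(\epsilon)$, which equals $q$ for $\epsilon>0$. By Lemma \ref{lemma: flow is rotation}, $\sigma^\epsilon_t$ acts on $(q,p)$ by the rotation $R_{\xi(q,p)}(t)$, where $\xi=\mu^\epsilon/|\mu^\epsilon|$. Thus $\sigma^\epsilon_t(q,p)=(R_{\xi(q,p)}(t)q,\,R_{\xi(q,p)}(t)p)$.

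We differentiate in the normal direction at $p=0$ along $(0,v)$ with $v\perp q$. Since $p=0$, the $p$-component has derivative $R_{\xi(q,0)}(t)v=R_q(t)v$. The terms coming from differentiating $\xi$ are multiplied by $p=0$ in the $p$-component, so they drop out. In the $q$-component, the derivative of $\xi$ does contribute, but it lands in $TZ$ and therefore only affects the upper-right block $*$, not $\dd^\perp$. Hence $\dd^\perp\sigma^\epsilon_t|_{(q,0)}=R_q(t)$ acting on $T^*_qS^2\cong q^\perp$. The only point needing care is the block-triangular structure: $\sigma^\epsilon_t$ fixes $Z$ pointwise, since $R_q(t)q=q$, so the differential has the stated form.

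For part (2), we use the simplified formula for $\tilde\gamma_t$ near $Z$. For $0\le t\le 1/2$, part (1) gives $\dd^\perp\tilde\gamma_t=R_q(4\pi t)$ immediately.

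For $1/2\le t\le1$, $\tilde\gamma_t=\sigma_\pi\sigma^\epsilon_{-4\pi t}\sigma_\pi$, where $\sigma_\pi$ stands for the local Dehn twist $\tau$ near $Z$. Here $\tau(q,p)=\varphi_\pi(q,p)=(-q,-p)$, a global linear map near $Z$. This map sends $Z$ to $Z$ via $q\mapsto -q$, with normal differential $v\mapsto -v$ from $q^\perp$ to $(-q)^\perp$. The chain rule then gives
\[\dd^\perp\tilde\gamma_t|_{(q,0)}=(-1)\circ R_{-q}(-4\pi t)\circ(-1)=R_{-q}(-4\pi t)=R_q(4\pi t),\]
since $-1$ commutes with rotations and $R_{-q}(\theta)=R_q(-\theta)$. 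Here we should double-check that the composition is taken at the correct points: $\sigma_\pi$ moves $(q,0)$ to $(-q,0)$, then $\sigma^\epsilon$ fixes $(-q,0)$ with normal rotation about $-q$, and the second $\sigma_\pi$ returns to $(q,0)$. The formula is consistent at $t=1/2$, where it gives $R_q(2\pi)=\id$, matching $t=1/2$ from the first branch, and at $t=1$.

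Finally, the loop $t\mapsto R_q(4\pi t)$ in $\Gamma(S^2,\Aut(T^*S^2))\simeq S^1$ winds twice, so $w(\tilde\gamma)=2$. The main obstacle is the sign and orientation bookkeeping in the second branch: the direction of the axis $\mu^\epsilon/|\mu^\epsilon|$ at $-q$, and the identification of $\Aut(T^*_qS^2)$ with $S^1$ via the orientation by $q$. Both should be checked carefully so that the two branches agree and give $+2$ rather than $0$.
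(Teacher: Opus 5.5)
Your proposal is correct and follows the paper's proof essentially step for step. Part (1) is obtained by differentiating the explicit rotation formula $\sigma^\epsilon_t(q,p)=(R_{\xi}(t)q,R_{\xi}(t)p)$ along $(q,sv)$. Part (2) comes from the chain rule through $\sigma_\pi\sigma^\epsilon_{-4\pi t}\sigma_\pi$, where the two factors of $-1$ cancel and $R_{-q}(-4\pi t)=R_q(4\pi t)$.

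The sign worry in your last paragraph is already settled by your own computation. The axis at $(-q,0)$ is $-q$ because $\mu^\epsilon(-q,0)=-\epsilon q$, so both branches give $R_q(4\pi t)$ and $w(\tilde\gamma)=2$ with the paper's convention.
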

\begin{proof}
    By Lemma \ref{lemma: flow is rotation}, we have
    \[\sigma^\epsilon_t(q,p)=(R_{\frac{\mu^\epsilon}{|\mu^\epsilon|}(q,p)}(t)(q),R_{\frac{\mu^\epsilon}{|\mu^\epsilon|}(q,p)}(t)(p)).\]
    For any $v\in NZ|_{(q,0)}\subset TT^*S^2|_{(q,0)}$, 
    \begin{align*}
        \dd^\perp\sigma^\epsilon_t\Big|_{(q,0)}(v) &=\pi^\perp(\frac{\dd}{\dd s}\Big|_{s=0}\sigma^\epsilon_t(q, sv)).\\
    &=\frac{\dd}{\dd s}\Big |_{s=0}R_{\frac{\mu^\epsilon}{|\mu^\epsilon|}(q,sv)}(t)(sv)\\
    &=R_{\frac{\mu^\epsilon}{|\mu^\epsilon|}(q,0)}(t)(v)\\
    &=R_{q}(t)(v)
    \end{align*}
    This completes (1).

    When $0\leq t \leq 1/2$, (1) immediately implies (2). 
    When $1/2\leq t \leq 1$, $\tilde{\gamma}_t=\sigma_\pi \sigma^\epsilon_{-4\pi t}\sigma_\pi$. Therefore, 
    \begin{align*}
        \dd^\perp\tilde{\gamma}_t\Big |_{(q,0)}(v)&= \pi^\perp(\dd\tilde{\gamma}_t\Big |_{(q,0)}(v))\\
        &=\pi^\perp(\dd\sigma_\pi\Big|_{(-q,0)}\circ \dd\sigma^\epsilon_{-4\pi t}\Big |_{(-q,0)}\circ \dd\sigma_\pi \Big|_{(q,0)}(v))\\
        &=\dd^\perp\sigma^\epsilon_{-4\pi t}\Big |_{(-q,0)}(v))\\
        &= R_{-q}(-4\pi t)(v)\\
        &=R_q(4\pi t)(v).
    \end{align*}
    This completes (2). And hence the winding number of $\tilde{\gamma}$ is equal to $2$ by definition.
\end{proof}

On the other hand, it is easy to see that the winding number of $\gamma_S$ is also $2$.
Therefore, we can conclude that: 
\begin{Corollary}
Seidel's clutching function $\tilde{\gamma}$ is identified with J.Lin's generalized Dehn twist $\gamma_S$ via homotopy near the $(-2)$-sphere $S$.  
\end{Corollary}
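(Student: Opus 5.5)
The plan is to apply the classification just established: since $w/2\colon \pi_1(G,\id)\to\Z$ is an isomorphism, two loops in $G$ with the same winding number are homotopic in $G$. Because $G$ consists of diffeomorphisms supported in $DT^*S^2\cong \Cl(\nu(S))$, any homotopy in $G$ extends by the identity to a homotopy in $\Diff^+(M)$ supported near $S$. So it suffices to check three things: both $\tilde\gamma$ and $\gamma_S$ lie in $G$, and both have winding number $2$.

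First, membership in $G$. For $\tilde\gamma$ I will use its explicit formula as a composition of the flows $\sigma^{s}_{c\rho}$ and $\sigma_{\pm\rho}$. Each factor preserves $|p|$: the flow $\sigma^s$ rotates both $q$ and $p$ about a common axis, so it preserves $|p|$, and the geodesic flow $\sigma=\varphi$ also preserves $r$. Each factor is the identity where $\rho=0$, hence near $\partial DT^*S^2$. On $Z$ I use the simplified formula valid for $|p|<\delta$: $\sigma^\epsilon_{4\pi t}$ rotates $q$ about the axis $\mu^\epsilon(q,0)/|\mu^\epsilon|=q$, so it fixes $q$, and similarly for the conjugated piece. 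Orientation is preserved by continuity from $\id$.

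For $\gamma_S$, the first piece $(q,R_q(4\pi t)p)$ visibly fixes $Z$ and preserves $|p|$. For the second piece, the null-homotopy $\Gamma^s_t$ should be applied on each level set $|p|=\mathrm{const}$, so it also preserves $|p|$. It is the identity at $|p|=1$, and near the boundary after reparametrising the radial coordinate. Thus $[\gamma_S]\in\pi_1(G)$, and by the Proposition this class is independent of the auxiliary choice of $\Gamma$, which settles the earlier remark.

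Next, the winding numbers. The lemma just proved gives $w(\tilde\gamma)=2$. For $\gamma_S$, near $Z$ the map is linear in $p$, so $\dd^\perp\gamma_S(t)|_{(q,0)}=R_q(4\pi t)$. This is the standard loop of winding number $2$; note it coincides exactly with $\dd^\perp\tilde\gamma_t$. Hence $w(\gamma_S)=w(\tilde\gamma)=2$, and by injectivity of $w/2$ the two classes agree in $\pi_1(G)$. Pushing this homotopy into $\Diff^+(M)$ gives $[\tilde\gamma]=[\gamma_S]\in\pi_1(\Diff^+(M),\id)$ through loops supported in $\nu(S)$.

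The only delicate point is smoothness of the chosen $\Gamma^s_t$ near $|p|=1/2$ and near the boundary, which is needed so that $\gamma_S$ genuinely lies in $G$. I would handle it by making $\Gamma^s_t$ constant in $s$ near $s=0$ and near $s=1$.
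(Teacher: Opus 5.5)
Your proposal is correct and follows the paper's argument. Both $\tilde\gamma$ and $\gamma_S$ are loops in $G$ whose normal derivative along $Z$ is $R_q(4\pi t)$, so both have winding number $2$; injectivity of $w/2\colon \pi_1(G)\to\Z$ then identifies them, and the homotopy extends by the identity to $M$. The only difference is that you spell out two points the paper leaves implicit: why both loops lie in $G$, and how to make $\Gamma^s_t$ constant in $s$ near $s=0,1$ so that $\gamma_S$ is smooth.
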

\subsection{Proof of Theorem \ref{thm:main thm}\ref{thm:main-1}}
\label{sec:proof}
Let $M\to \tilde{M}\to S^2$ be the smooth fiber bundle associated to the clutching function $\tilde{\gamma}$. Since $\tilde{\gamma}$ is supported in a tubular neighborhood $\Cl(\nu (S))\cong DT^*S^2$ of the $(-2)$-sphere $S$, the bundle $\tilde{M}$ has a natural splitting $\tilde{M}=\tilde{M}_1\cup \tilde{M}_2$, where $\nu (S)\to \tilde{M}_1\to S^2$ has clutching function $\tilde{\gamma}$ and $\tilde{M}_2=(M-\nu (S))\times S^2$ is a trivial bundle. Since $\tilde{\gamma}$ acts on the zero section $Z$ trivially, we obtain a trivial subbundle $Z \to \tilde S\cong S^2\times S^2 \to S^2$ inside $\tilde M_1$. Fixing any point $p\in Z$ gives us a section of $\tilde{S}$, denoted by $\Sigma_p$. Then 
\begin{gather*}
    Z\cdot Z = 0, Z\cdot \Sigma_p = 1, \Sigma_p\cdot \Sigma_p=0.
\end{gather*}
The following two lemmas are special cases of \cite{linFSW}*{Lemma 6.5, Proposition 6.6}. We rewrite the proof here for completeness.
\begin{Lemma}\label{lemma: intersection number} 
    \cite[Lemma 6.5]{linFSW}
    \begin{enumerate}
        \item $\tilde{S}\cdot\tilde{S}\cdot\tilde{S}= 8$
        \item $\langle p_1(T(\tilde M_1/S^2)),[\tilde S]\rangle= 8$
    \end{enumerate}
\end{Lemma}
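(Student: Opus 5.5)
The plan is to reduce both quantities to the same number $\langle e(N)^2,[\tilde S]\rangle$, where $N$ denotes the normal bundle of $\tilde S$ in $\tilde M_1$, and then to compute $e(N)$ by restricting to the two generators $Z$ and $\Sigma_p$ of $H_2(\tilde S)\cong H_2(S^2\times S^2)$.

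\emph{Step 1 (reduction of (1)).} Since $\tilde S$ is a closed oriented $4$-dimensional submanifold of the $6$-manifold $\tilde M_1$ with oriented rank-$2$ normal bundle $N$, the triple self-intersection is $\tilde S\cdot\tilde S\cdot\tilde S=\langle e(N)^2,[\tilde S]\rangle$. This holds because the Poincar\'e dual of $\tilde S$ restricts to $\tilde S$ as $e(N)$.

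\emph{Step 2 (reduction of (2)).} The submanifold $\tilde S$ is a subbundle of $\tilde M_1\to S^2$ whose fibers are the zero sections $Z$. Hence the normal bundle of $\tilde S$ in $\tilde M_1$ coincides with the fiberwise normal bundle, and we get a splitting
\[T(\tilde M_1/S^2)|_{\tilde S}\cong T(\tilde S/S^2)\oplus N.\]
Because $\tilde S\cong Z\times S^2$ is a trivial bundle, $T(\tilde S/S^2)$ is pulled back from $TS^2$ on the $Z$ factor, so its $p_1$ vanishes. For an oriented rank-$2$ bundle we have $p_1(N)=e(N)^2$. By the Whitney sum formula for $p_1$ (rationally, with the product term $p_1\cdot p_1$ landing in degree $8$), we get $p_1(T(\tilde M_1/S^2))|_{\tilde S}=e(N)^2$. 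Thus (2) also equals $\langle e(N)^2,[\tilde S]\rangle$.

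\emph{Step 3 (computing $e(N)$).} Let $a,b\in H^2(\tilde S)$ be the classes dual to $Z$ and $\Sigma_p$. Using $Z\cdot Z=\Sigma_p\cdot\Sigma_p=0$ and $Z\cdot\Sigma_p=1$, we have
\[e(N)=\langle e(N),[\Sigma_p]\rangle\,a+\langle e(N),[Z]\rangle\,b.\]
\begin{itemize}
\item Restricted to a fiber $Z$, $N$ is the normal bundle of $S$ in $M$, so $\langle e(N),[Z]\rangle=S\cdot S=-2$.
\item Restricted to $\Sigma_p$, $N$ is the rank-$2$ bundle over $S^2$ obtained by clutching the fiber $N_pZ$ with the normal derivative loop $t\mapsto \dd^\perp\tilde\gamma_t|_{(p,0)}$. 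By the lemma computing $\dd^\perp\tilde\gamma_t|_{(q,0)}=R_q(4\pi t)$, this is a loop of winding number $2$ in $SO(2)$, so $\langle e(N),[\Sigma_p]\rangle=\pm2$.
\end{itemize}
Therefore $e(N)=\pm2a-2b$ and $e(N)^2=\mp 8$.

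\emph{Main obstacle: fixing the sign.} The whole difficulty is pinning down the sign in the clutching computation, which must yield $-2$ (the same sign as $S\cdot S$) so that $e(N)^2=8$. Three orientation conventions interact here:
\begin{itemize}
\item the orientation-reversing identification $\Cl(\nu(S))\cong\lc$, under which the rotation $R_q$ on the fiber $T^*_qS^2$ reverses its sense when viewed in the orientation of $M$;
\item the orientation of $\tilde S$ induced from that of the base $S^2$ and of the fiber;
\item the convention relating a clutching loop of winding $k$ to the Euler number $\pm k$.
\end{itemize}
I would fix these by checking against the model case: with the complex orientation on $T^*S^2$, the normal bundle of the zero section has Euler number $+2$, so in $M$ both the self-intersection and the clutching degree flip sign together. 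That would give $e(N)=-2a-2b$, hence $e(N)^2=8$ and both (1) and (2) equal $8$. As a consistency check, this matches \cite[Lemma 6.5]{linFSW} for the $A_1$-family, which the preceding corollary identifies with $\tilde\gamma$.
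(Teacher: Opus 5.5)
Your proposal is correct and follows essentially the same route as the paper. Both proofs reduce (1) and (2) to $\langle e(N)^2,[\tilde S]\rangle$ via the splitting $T(\tilde M_1/S^2)|_{\tilde S}=T(\tilde S/S^2)\oplus N$, and both compute $e(N)$ from $\langle e(N),[Z]\rangle=-2$ together with the clutching degree along $\Sigma_p$; the paper kills the $T(\tilde S/S^2)$ term by writing $e(T(\tilde S/S^2))=2\PD[\Sigma_p]$ and squaring, which amounts to your pullback-from-$TZ$ remark. Your model-case check only accounts for the orientation reversal of $\Cl(\nu(S))\cong\lc$, not the clutching-to-Euler-number convention, but the paper is no more explicit (it simply asserts $e(N|_{\Sigma_p})=-w(\gamma_S)=-2$), and only the common sign of (1) and (2) matters in the index computation that follows.
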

\begin{proof}
    Let $N$ be the normal bundle of $\tilde{S}$ in $\tilde{M}_1$. Then $e(N|_Z)=-2$ and $e(N|_{\Sigma_p})=-w(\gamma_S)=-2$. So $e(N)=\PD(-2[Z]-2[\Sigma_p])$. And 
    \[\tilde{S}\cdot\tilde{S}\cdot\tilde{S}=\langle e(N)^2,[\tilde{S}]\rangle=8.\]
    This is the first statement. 
    
    To prove the second statement, note that there is a decomposition
    \[T(\tilde{M}_1/S^2)|_{\tilde{S}}=T(\tilde{S}/S^2)\oplus N.\]
    So \[p_1(T(\tilde{M}_1/S^2)|_{\tilde{S}})=p_1(T(\tilde{S}/S^2))+p_1(N)=e(T(\tilde{S}/S^2))^2+e(N)^2.\]
    Thus 
    \[\langle p_1(T(\tilde M_1/S^2)),[\tilde S]\rangle=\langle e(T(\tilde{S}/S^2))^2, [\tilde{S}]\rangle+\langle e(N)^2, [\tilde{S}]\rangle=8+\langle e(T(\tilde{S}/S^2))^2, [\tilde{S}]\rangle\]
    Since $e(T(\tilde{S}/S^2)|_Z)=2$ and $e(T(\tilde{S}/S^2)|_{\Sigma_p})=0$,  we have $e(T(\tilde{S}/S^2))=2\PD([\Sigma_p])$. So 
    \[\langle e(T(\tilde{S}/S^2))^2, [\tilde{S}]\rangle=0.\]
    Putting this into the formula above and using (1), we immediately conclude the statement.
\end{proof}

\begin{Lemma}\label{lemma: index}
    Let $\tilde{\mathfrak{s}}$ be a family $\spinc$ structure on $\tilde{M}$ such that $\PD(c_1(\tilde{\mathfrak{s}}|_{\tilde{M}_1}))= \pm 2[\tilde{S}]\in H_4(\tilde{M}_1,\partial\tilde{M}_1;\mathbb{Z})$, then 
    \[\langle c_1(\ind_\mathbb{C}(\slashed{D}^+(\tilde{M},\tilde{\mathfrak{s}})), [S^2]\rangle=\pm 1 .\]
\end{Lemma}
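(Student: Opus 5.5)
We compute the first Chern class of the index bundle with the family Atiyah--Singer index theorem. For the spin$^c$ Dirac family on a fibration with four-dimensional fibers this gives
\[
\operatorname{ch}(\ind_\C \slashed{D}^+(\tilde M,\tilde{\fr s})) = p_*\Bigl(e^{c_1(\tilde{\fr s})/2}\,\hat{A}(T(\tilde M/S^2))\Bigr),
\]
where $p_*$ is integration along the fiber. The degree-two part of the right-hand side is $p_*$ of the degree-six part of the integrand. Pairing with $[S^2]$ therefore gives
\[
\langle c_1(\ind),[S^2]\rangle=\int_{\tilde M}\Bigl(\frac{c_1(\tilde{\fr s})^3}{48}-\frac{c_1(\tilde{\fr s})\,p_1(T(\tilde M/S^2))}{48}\Bigr),
\]
using $\hat A=1-p_1/24+\cdots$ and $e^{c/2}=1+c/2+c^2/8+c^3/48+\cdots$. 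So the plan is to evaluate the two integrals $\int c^3$ and $\int c\,p_1$ over the total space $\tilde M$.

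To evaluate them, split $\tilde M=\tilde M_1\cup\tilde M_2$. On $\tilde M_2=(M-\nu(S))\times S^2$ the family is trivial. In the intended application $\tilde{\fr s}|_{\tilde M_2}$ is pulled back from the fiber, so $c_1(\tilde{\fr s})$ and $p_1$ come from $M_2$, and any degree-six product of them vanishes there because $M_2$ has dimension four. I will make this precise as follows: $c_1(\tilde{\fr s}|_{\tilde M_1})$ has a lift to relative cohomology $H^2(\tilde M_1,\partial\tilde M_1)$, Poincaré dual to $\pm2[\tilde S]$, which is the hypothesis. This lift is compatible with the splitting, and by Lemma \ref{lemma:FSW spinc only on fiber}(2) we may twist by $p^*L$ without changing the hypothesis up to this normalization. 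The integrals then localize onto $\tilde S$:
\[
\int_{\tilde M} c^3=(\pm2)^3\,\tilde S\cdot\tilde S\cdot\tilde S,\qquad \int_{\tilde M} c\,p_1=\pm2\langle p_1(T(\tilde M_1/S^2)),[\tilde S]\rangle .
\]
For the second integral we use that the restriction of $p_1$ to $\tilde M_1$ is the vertical $p_1$, and pairing with a relative class is well defined.

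Lemma \ref{lemma: intersection number} now gives $\int c^3=\pm64$ and $\int c\,p_1=\pm16$. Hence
\[
\langle c_1(\ind),[S^2]\rangle=\pm\frac{64-16}{48}=\pm1 ,
\]
which is the claim.

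The main obstacle is the localization step. One has to justify that $c_1(\tilde{\fr s})$ is represented by a class supported in $\tilde M_1$ that is dual to $\pm2\tilde S$, rather than $\pm2\tilde S$ plus a correction pulled back from the base. A base correction $p^*x$ would contribute terms of the form $3\,c^2\,p^*x$ and $p^*x\cdot p_1$. To control these I would use that $c_1(\tilde{\fr s})$ restricted to $\tilde M_2$ is pulled back from $M_2$, that $p_1$ is pulled back there as well, and the vanishing of the fiber signature contributions relevant to the ambiguity. Failing that, I would simply choose $\tilde{\fr s}$ so that the correction is zero, which the hypothesis allows.
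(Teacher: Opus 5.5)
Your overall route is the paper's. The family index theorem reduces the claim to $\frac{1}{48}\int_{\tilde M}(c_1^3-c_1p_1)$, you localize onto $\tilde S$, and Lemma \ref{lemma: intersection number} then gives $\pm(64-16)/48=\pm1$; the expansion and the arithmetic are correct. The gap is the localization step. You flag it but do not establish it. The hypothesis only controls $c_1(\tilde{\mathfrak s})$ on $\tilde M_1$, so your claim that the relative lift $\PD(\pm2[\tilde S])$ is ``compatible with the splitting'' (i.e.\ that its extension by zero equals $c_1(\tilde{\mathfrak s})$) is unjustified. What is true is weaker: the difference restricts to $0$ on $\tilde M_1$. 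By the exact sequence of the pair and excision, it therefore comes from $H^2(\tilde M,\tilde M_1)\cong H^2(\tilde M_2,\partial\tilde M_2)\cong H_4(M_2\times S^2)=H_2(M_2)\otimes H_2(S^2)$, using $H_4(M_2)=0$. This is the paper's Mayer--Vietoris step: $c_1(\tilde{\mathfrak s})=\PD(\pm2[\tilde S]+\beta\times[S^2])$ with $\beta$ coming from $M_2=M-\nu(S)$. You then still have to kill the extra terms, and both facts are needed:
\begin{itemize}
\item $[\tilde S]^i\cdot(\beta\times[S^2])^{3-i}=0$ for $i=1,2$, because $\tilde S\subset\operatorname{int}\tilde M_1$ and $\beta\times S^2\subset\tilde M_2$ are disjoint;
\item $(\beta\times[S^2])^3=0$ and $\langle p_1(T(\tilde M/S^2)),\beta\times[S^2]\rangle=0$, because on $M_2\times S^2$ both the relative class and $p_1$ are pulled back from the $4$-manifold $M_2$.
\end{itemize}
Your appeal to ``the intended application'' imports an assumption not in the lemma. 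It is in fact implied by the hypothesis, since $c_1(\tilde{\mathfrak s})$ vanishes on $\partial\tilde M_1\supset\{y\}\times S^2$. Even so, it does not by itself split $\int_{\tilde M}$; the relative decomposition above is what does that.

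Conversely, the correction you do worry about, a base class $p^*x$, cannot occur at all. It restricts nontrivially to $\tilde M_1$, since it pairs to $\langle x,[S^2]\rangle$ with the section $\Sigma_p\subset\tilde S$, so the hypothesis excludes it. It would also not be harmless. With $\mathfrak s=\tilde{\mathfrak s}|_M$, it would shift the answer by $\frac{1}{48}\bigl(3c_1(\mathfrak s)^2-p_1(M)\bigr)\langle x,[S^2]\rangle$. This is nonzero in the case of interest: $3(-8+16)\neq0$ when $c_1(\mathfrak s)=\pm2\PD[S]$ on a K3-type fiber. So no ``vanishing of fiber signature contributions'' can rescue it. Nor can you ``choose'' $\tilde{\mathfrak s}$: it is given, and the hypothesis is exactly what pins it down. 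For the same reason, your remark that one may twist by $p^*L$ without changing the hypothesis is false. Twisting adds $2p^*c_1(L)$ to $c_1$, which violates the hypothesis unless $L$ is trivial. It also changes $\langle c_1(\ind),[S^2]\rangle$ by $\operatorname{rk}_{\mathbb C}(\ind)\cdot\deg L$, and the rank is $1$ in that case.
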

\begin{proof}
    By the family Atiyah-Singer index theorem, 
    \begin{align*}
        \langle c_1(\ind_\mathbb{C}(\slashed{D}^+(\tilde{M},\tilde{\mathfrak{s}}))),[S^2] \rangle&=\langle e^{c_1(\tilde{s})/2}\hat{A}(T(\tilde{M}/S^2)),[\tilde{M}]\rangle\\
        &=\frac{1}{48}\langle c_1(\tilde{\mathfrak{s}})^3-c_1(\tilde{\mathfrak{s}}) p_1(T(\tilde{M}/S^2)), [\tilde{M}]\rangle.
    \end{align*}
    Since $\PD(c_1(\tilde{\mathfrak{s}}|_{\tilde{M}_1}))=\pm 2[\tilde{S}]$, by Mayer-Vietoris sequence we have 
    \[c_1(\tilde{\mathfrak{s}})=\PD(\pm 2[\tilde{S}]+\beta\times[S^2])\]
    for some $\beta\in \mathrm{Im}(H_2(M-\nu (S);\mathbb{Z})\to H_2(M;\mathbb{Z}))$. Similarly to the computation in Lemma \ref{lemma: intersection number}, we have:
    \[[\tilde{S}]^i\cdot(\beta\times [S^2])^{3-i}=0,\  \forall 0\leq i<3\]
    and 
    \[\langle p_1(T(\tilde{M}/S^2)),\beta\times [S^2]\rangle=0.\]
    Therefore, 
    \[\langle c_1(\ind_\mathbb{C}(\slashed{D}^+(\tilde{M},\tilde{\mathfrak{s}}))),[S^2]\rangle=\pm \frac{1}{48}(8 [\tilde{S}]^3-2\langle p_1(T(\tilde{M}/S^2)), [\tilde{S}]\rangle)=\pm 1.\]
\end{proof}

\begin{Proposition}\label{Prop: nonvanish}
    Let $M$ be a K3-type 4-manifold containing a $(-2)$-sphere $S$. Let $[\tilde{\gamma}]=[\gamma_S]\in \pi_1(\Diff(M))$ be the generalized Dehn twist along $S$. Let $\mathfrak{s}_0$ be the self-conjugate $\spinc$ structure on $M$ and  $\mathfrak{s}=\mathfrak{s}_0\pm \PD([S])$. Then 
    $\SW([\tilde{\gamma}],\mathfrak{s})=\pm\SW(M, \mathfrak{s}_0)$ is odd.
\end{Proposition}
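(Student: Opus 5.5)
We plan to apply the gluing formula, Proposition \ref{Prop: gluing}, to the family $\tilde M\to S^2$ with clutching function $\tilde\gamma$. We will then evaluate the index term using Lemma \ref{lemma: index}. Finally, we will show that $\SW(M,\mathfrak{s}_0)$ is odd using the mod $2$ rigidity for K3-type manifolds.

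\emph{Step 1: the decomposition.} Take $M_1=\Cl(\nu(S))\cong \lc$ (with reversed orientation) and $M_2=M-\nu(S)$, glued along $Y=\partial M_1\cong\RP^3$. Here $\RP^3$ is a lens space and hence an L-space. We have $b_1(M_1)=0$, and $b_2^+(M_1)=0$ since the intersection form of $M_1$ is $(-2)$. Moreover $b_2^+(M_2)=3>1$, because $H_2(M_2)$ is the orthogonal complement of $[S]$ and $[S]^2<0$. The loop $\tilde\gamma$ is supported in $\mathrm{Int}(M_1)$.

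\emph{Step 2: the spin$^c$ structures.} For the fiber, $\mathfrak{s}=\mathfrak{s}_0\pm\PD([S])$ has $c_1(\mathfrak{s})=\pm2\PD[S]$, since $c_1(\mathfrak{s}_0)=0$. Thus $c_1^2=-8$ and
\[d(M,\mathfrak{s})=(-8-3(-16)-2\cdot 24)/4=-2,\]
so $\SW([\tilde\gamma],\mathfrak{s})$ is defined. For the family, $\mathfrak{s}|_{M_2}=\mathfrak{s}_0|_{M_2}$ because $\PD[S]$ restricts trivially to $M_2$. We therefore choose a family $\spinc$ structure $\tilde{\mathfrak{s}}$ on $\tilde M$ that is the pullback of $\mathfrak{s}_0|_{M_2}$ on $\tilde M_2=M_2\times S^2$, and on $\tilde M_1$ satisfies $\PD(c_1(\tilde{\mathfrak{s}}|_{\tilde M_1}))=\pm2[\tilde S]$. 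This class restricts on the fiber to $\pm 2[S]$, consistent with $c_1(\mathfrak{s})$.

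Existence of such $\tilde{\mathfrak{s}}$ is the main technical point. We would construct it on $\tilde M_1$ by twisting the canonical spin structure, which extends over the family since $\tilde\gamma$ lies in a simply connected region of $\pi_1(\Diff)$ near the identity; alternatively one could cite the construction in \cite{linFSW}. We then glue across $\partial\tilde M_1=\RP^3\times S^2$. The restriction of $\pm2[\tilde S]$ to the boundary is trivial, since $\tilde S$ is interior, so the two pieces match up to torsion, which we can adjust. By Lemma \ref{lemma:FSW spinc only on fiber}(3), any extension of $\mathfrak{s}$ computes $\SW([\tilde\gamma],\mathfrak{s})$.

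There is an apparent mismatch: Proposition \ref{Prop: gluing} is phrased with $d(M,\mathfrak{s}_0)=0$, whereas our fiber structure is $\mathfrak{s}$ with $d=-2$. We read the proposition as gluing $\mathfrak{s}$ on $\tilde M_1$ with $\mathfrak{s}_0$ on $M_2$, with $\SW(M,\mathfrak{s}_0)$ the unparametrized invariant of the structure agreeing with the family on $M_2$ and having $d=0$. This is the intended special case of \cite[Theorem M]{linFSW}. Matching these hypotheses precisely is where we expect the most care to be needed.

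\emph{Step 3: conclusion.} Proposition \ref{Prop: gluing} together with Lemma \ref{lemma: index} gives
\[\SW([\tilde\gamma],\mathfrak{s})=\FSW(\tilde M,\tilde{\mathfrak{s}})=-(\pm1)\SW(M,\mathfrak{s}_0)=\mp\SW(M,\mathfrak{s}_0).\]
For oddness, $\mathfrak{s}_0$ is the unique spin structure and $d(M,\mathfrak{s}_0)=0$. By Morgan--Szabó, or equivalently Bauer--Furuta mod $2$ rigidity, a homotopy K3 surface with $b_2^+=3$ has $\SW(M,\mathfrak{s}_0)\equiv1\pmod 2$. We will invoke this as a known result. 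This is the only external input, and the hypothesis that $M$ is spin of K3-type is exactly what it needs.
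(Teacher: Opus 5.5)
Your proposal is correct and follows essentially the same route as the paper: split off $\Cl(\nu(S))$ along the L-space $\RP^3$, apply Proposition \ref{Prop: gluing} together with Lemma \ref{lemma: index} to get $\SW([\tilde\gamma],\mathfrak{s})=\pm\SW(M,\mathfrak{s}_0)$, and conclude oddness from Morgan--Szab\'o. The one loose point is your justification that $\tilde{\mathfrak{s}}$ exists (the ``simply connected region'' remark is not an argument), and the paper's route avoids the issue: take any extension of $\mathfrak{s}$ and twist by a line bundle from $S^2$ so that it is a product on $\tilde M_2$; then $c_1(\tilde{\mathfrak{s}}|_{\tilde M_1})=\pm 2\PD[\tilde S]$ holds automatically, since $H^2(\tilde M_1)\cong\Z^2$ is detected by $[Z]$ and $[\Sigma_p]=[Z]+[\{y\}\times S^2]$ with $y\in\partial M_1$, and both classes evaluate to $\mp 4$ on each.
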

\begin{proof}
Pick any family $\spinc$ structure $\tilde{\mathfrak{s}}$ on $\tilde{M}$ such that $\tilde{\mathfrak{s}}|_M\cong \mathfrak{s}$.  By twisting $\tilde{\mathfrak{s}}$ with a complex line bundle pulled back from $S^2$, we may assume $\tilde{\mathfrak{s}}|_{\tilde{M}_2}$ is the pull-back of $\mathfrak{s}|_{M-\nu (S)}$. A direct computation shows that $d(M, \mathfrak{s}_0)=0$ and $d(\tilde{M},\tilde{\mathfrak{s}})=d(M,\mathfrak{s})+2=0$.  Thus, by Proposition \ref{Prop: gluing} and Lemma \ref{lemma: index} we have 
    \[\SW([\tilde{\gamma}],\mathfrak{s})=-\langle c_1(\ind_\mathbb{C}(\slashed{D}^+(\tilde{M},\tilde{\mathfrak{s}})), [S^2]\rangle \cdot \SW(M,\mathfrak{s}_0)=\pm \SW(M,\mathfrak{s}_0)\]
  By  \cite{morgan1997homotopy}*{Theorem 1.1, Remark 2.2},  $\SW(M,\mathfrak{s}_0)$ is odd, which completes the proof.
\end{proof}

We can then prove Theorem \ref{thm:main thm}\ref{thm:main-1}.
Let $\mathfrak{s}_0$ be the spin $\spinc$ structure on $M$. Let $\mathfrak{s}=\mathfrak{s}_0+\PD([S])$. Then $\tau^*\mathfrak{s}=\mathfrak{s}_0-\PD([S])=\bar{\mathfrak{s}}$, so we have 
\[\SW([\xi],\mathfrak{s})=-\SW([\xi],\tau^*\mathfrak{s})\]
by Proposition \ref{Prop: charge conjugation}.  Therefore, by Corollary \ref{relation of SW} and Proposition \ref{Prop: nonvanish},
\[\SW([\tilde{\eta}],\mathfrak{s})\equiv\SW([\tilde{\gamma}],\mathfrak{s})\neq 0 \mod 2\]
So by Proposition \ref{prop:vanish for trivial bundle}, regardless of the choice of isotopy, the corresponding family $\tilde M\to S^2$ is nontrivial. 
The theorem then follows by Lemma \ref{lem:criterion}.

\section{Reflection along $(-1)$-sphere}
\label{sec:reflection}
\subsection{Local model}
Let $M$ be a closed $4$-manifold and $E\subset M$ be an embedded $(-1)$-sphere. 
The tubular neighborhood $\nu(E)$ can be identified with the complex line bundle over $S^2$ of Euler number $(-1)$. 
There is an orientation-preserving diffeomorphism 
\begin{align*}
    \Phi\colon \Cl(\nu(E)) \xrightarrow{\cong} \overline{\CP^2}-D^4.
\end{align*}
Next we give an explicit model for $\overline{\CP^2}-D^4$. Fixing an $\varepsilon>0$, we have
\begin{gather*}
    U \coloneq \{[0:z_1:z_2] \} \cup \{[1:w_1:w_2] \mid r \geq \varepsilon \} \cong \overline{\CP^2}-D^4 \subset \overline{\CP^2},
\end{gather*}
where $r = |w_1|^2 + |w_2|^2$.
Also the tubular neighborhood of $\partial U$ can be written as 
\begin{gather*}
    \{[1:w_1:w_2] \mid \varepsilon \leq r \leq2\varepsilon\} \cong S^3 \times [0,1].
\end{gather*}
Write $(w_1,w_2)=(x_1+\ii y_1,x_2 + \ii y_2)$. Consider the following $S^1$-action $ \rho_t \colon \C^2  \to \C^2$: 
\[  (x_1+\ii y_1,x_2 + \ii y_2)  \mapsto (x_1 + \ii (y_1\cos t+y_2\sin t), x_2 + \ii (-y_1\sin t  + y_2\cos t)),\]
where $t \in S^1 = \R/2\pi\Z$. 
Note that $\rho_t$ preserves $|w_1|^2+|w_2|^2$. Assume $\chi \colon [0,\infty) \to \R$ is an increasing cutoff function with $\chi(r) = 0$ for $r\leq \varepsilon$ and $\chi(r) =\pi$ for $r\geq 2\varepsilon$.

Then the local model of reflection is defined as 
\begin{align*}
    R\colon U & \to U \\
    [0:z_1:z_2] & \mapsto [0:\bar{z}_1:\bar{z}_2], \\
    [1:w_1:w_2] & \mapsto [1:\rho_{\chi(r)}(w_1,w_2)].
\end{align*}
Extending $R$ by the identity over $M-\nu(E)$, we obtain the reflection $r_E\colon M \to M$. 

It is easy to see that $R^2$ is the boundary Dehn twist of $U$. In fact, this boundary Dehn twist is smoothly isotopic to the identity via the isotopy below.
Notice that  
\begin{align*}
    R^2 \colon U & \to U \\
    [0:z_1:z_2] & \mapsto [0:z_1:z_2], \\
    [1:w_1:w_2] & \mapsto [1:\rho_{2\chi(r)}(w_1,w_2)].
\end{align*}
can be isotoped to $\id$ in two steps. First, consider another $S^1$-action $\rho'_t:\mathbb{C}^2\to \mathbb{C}^2$:
\[(w_1, w_2)\mapsto(w_1, e^{it} w_2)\]
where $t \in S^1 = \Z/2\pi\Z$.  Then $[\rho_t]=[\rho'_t]\in \pi_1(\mathrm{SO}(4))$. Thus, $R^2$ is isotopic to 
\begin{align*}
    \phi \colon U & \to U \\
    [0:z_1:z_2] & \mapsto [0:z_1:z_2], \\
    [1:w_1:w_2] & \mapsto [1:w_1: e^{2\chi(r)i}w_2].
\end{align*}
via an isotopy $f_s$ supported in $\{\epsilon\leq r\leq 2\epsilon\}$. Next, note that $\phi$ is isotopic to $\id$ via the following isotopy:
\begin{align*}
    g_s \colon U & \to U \\
    [0:z_1:z_2] & \mapsto [0:z_1: e^{2\pi is}z_2], \\ 
    [1:w_1:w_2] & \mapsto [1:w_1:e^{2is\chi(r)}w_2].
\end{align*}
So we can concatenate these two isotopies to get an isotopy from $\id$ to $r_E^2$:
\[
h_s=\begin{cases}
    g_{2s}, &0\leq s\leq 1/2\\
    f_{2-2s}, &1/2\leq s\leq 1
\end{cases}
\]
Again, we can substitute the formula for $h_s$ into the expression for the clutching function $\tilde{\gamma}_s=h_sr_E h_{s+1/2}^{-1}h_{1/2}r_E^{-1}$. Near the exceptional divisor $E$ ($r\geq 2\epsilon$), the formula of $\tilde{\gamma}$ is rather simple:
\[
\tilde{\gamma}_s([z_0:z_1:z_2])=\begin{cases}
    [z_0:z_1:e^{8\pi is}z_2 ], & 0\leq s\leq 1/2\\
    [z_0:z_1:z_2], &1/2\leq s\leq 1
\end{cases}
\]

\subsection{Proof of Theorem \ref{thm:main thm}\ref{thm:main-2}}
\label{sec:proof}
As in Section \ref{sec:Dehn}, let $M\to \tilde{M}\to S^2$ be the smooth fiber bundle associated to the clutching function $\tilde{\gamma}$. The bundle $\tilde{M}$ has a natural splitting $\tilde{M}=\tilde{M}_1\cup \tilde{M}_2$, where $\nu (E)\to \tilde{M}_1\to S^2$ has clutching function $\tilde{\gamma}$ and $\tilde{M}_2=(M-\nu (E))\times S^2$ is a trivial bundle. Note that $\tilde{\gamma}$ fixes the zero section $E$ setwise and makes two full rotations. So we obtain a subbundle $E \to \tilde E\cong S^2\times S^2 \to S^2$ inside $\tilde M_1$. Let $p=[0:1:0]\in E$ be one of the fixed points. It determines a section of $\tilde{E}$, denoted by $\Sigma_p$. Then in $\tilde{E}$, we have:
\begin{gather*}
    E\cdot E = 0, E\cdot \Sigma_p = 1, \Sigma_p\cdot \Sigma_p=2.
\end{gather*}
\begin{Lemma}\label{lemma: intersection number 2}{\ } 
    \begin{enumerate}
        \item $\tilde{E}\cdot\tilde{E}\cdot\tilde{E}= -2$
        \item $\langle p_1(T(\tilde M_1/S^2)),[\tilde E]\rangle= -2$
    \end{enumerate}
\end{Lemma}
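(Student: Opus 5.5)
The plan is to run the argument of Lemma \ref{lemma: intersection number} again, this time inside the sphere bundle $\tilde E\cong S^2\times S^2$. In $H_2(\tilde E;\Z)$ I use the basis $[E],[\Sigma_p]$, with intersection form $E\cdot E=0$, $E\cdot\Sigma_p=1$, $\Sigma_p\cdot\Sigma_p=2$. Let $N$ be the normal bundle of $\tilde E$ in $\tilde M_1$. Since $\Phi$ is orientation-preserving, the orientation of $\tilde M_1$ is the one induced from $U\subset\overline{\CP^2}$ and the base. I will write
\[
e(N)=\PD(a[E]+b[\Sigma_p]),\qquad e(T(\tilde E/S^2))=\PD(c[E]+d[\Sigma_p]),
\]
and determine $a,b,c,d$ by restricting to $E$ and to $\Sigma_p$.

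\textbf{Restriction to the fibre $E$.} Here $N|_E$ is the normal bundle of the $(-1)$-sphere, so
\[
\langle e(N),[E]\rangle=b=-1 .
\]
Also $T(\tilde E/S^2)|_E=TE$, so $d=2$.

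\textbf{Restriction to the section $\Sigma_p$.} The restriction $T(\tilde E/S^2)|_{\Sigma_p}$ is the normal bundle of $\Sigma_p$ in $\tilde E$. Hence
\[
c+2d=\Sigma_p\cdot\Sigma_p=2,\qquad\text{so } c=-2 .
\]
For $N|_{\Sigma_p}$, I work in the affine chart around $p=[0:1:0]$ with coordinates $(z_0/z_1,z_2/z_1)$. The clutching function $\tilde\gamma_s$ only multiplies $z_2$ by $e^{8\pi i s}$ and fixes the normal coordinate $z_0/z_1$. So the clutching of $N|_{\Sigma_p}$ is trivial, and
\[
a+2b=\langle e(N),[\Sigma_p]\rangle=0,\qquad\text{so } a=2 .
\]
This step is the main obstacle: it needs care with orientation and sign conventions. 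That is, I must check that the factor $e^{8\pi is}$ on the tangent direction is consistent with $\Sigma_p\cdot\Sigma_p=2$, and that the normal line at $p$ genuinely does not rotate. The surrounding region $\{\epsilon\le r\le 2\epsilon\}$ does not meet $\tilde E$, so only the explicit formula for $\tilde\gamma$ near $E$ is needed.

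\textbf{Part (1).} Using $E^2=0$, $E\Sigma_p=1$, $\Sigma_p^2=2$,
\[
\tilde E\cdot\tilde E\cdot\tilde E=\langle e(N)^2,[\tilde E]\rangle=(2E-\Sigma_p)^2=-4+2=-2 .
\]

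\textbf{Part (2).} As before, $T(\tilde M_1/S^2)|_{\tilde E}=T(\tilde E/S^2)\oplus N$, so
\[
p_1=e(T(\tilde E/S^2))^2+e(N)^2 .
\]
The first term is $(-2E+2\Sigma_p)^2=-8+8=0$. Therefore
\[
\langle p_1(T(\tilde M_1/S^2)),[\tilde E]\rangle=0+(-2)=-2 .
\]
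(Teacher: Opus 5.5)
Your proposal is correct and follows the paper's proof: you determine $e(N)=\PD(2[E]-[\Sigma_p])$ from its restrictions to $E$ and $\Sigma_p$, square it, and then split $p_1$ over $T(\tilde E/S^2)\oplus N$. The one thing you add is the explicit check that $e(T(\tilde E/S^2))=\PD(-2[E]+2[\Sigma_p])$ has square zero, which the paper uses without writing it out.
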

\begin{proof}
    Let $N$ be the normal bundle of $\tilde{E}$ in $\tilde{M}_1$. Then $e(N|_E)=-1$ and $e(N|_{\Sigma_p})=0$. So $e(N)=\PD(2[E]-[\Sigma_p])$. And 
    \[\tilde{E}\cdot\tilde{E}\cdot\tilde{E}=\langle e(N)^2,[\tilde{E}]\rangle=-2.\]
    This is the first statement. 
    
    To prove the second statement, use the decomposition
    \[T(\tilde{M}_1/S^2)|_{\tilde{E}}=T(\tilde{E}/S^2)\oplus N.\]
    Thus 
    \[\langle p_1(T(\tilde M_1/S^2)),[\tilde E]\rangle=\langle e(T(\tilde{E}/S^2))^2, [\tilde{E}]\rangle+\langle e(N)^2, [\tilde{E}]\rangle=-2\]
\end{proof}

\begin{Lemma}\label{lemma: index 2}
    Let $\tilde{\mathfrak{s}}$ be a family $\spinc$ structure on $\tilde{M}$. 
    \begin{enumerate}
        \item  Suppose $\PD(c_1(\tilde{\mathfrak{s}}|_{\tilde{M}_1}))= \pm 3[\tilde{E}]\in H_4(\tilde{M}_1,\partial\tilde{M}_1;\mathbb{Z})$, then 
    $$\langle c_1(\ind_\mathbb{C}(\slashed{D}^+(\tilde{M},\tilde{\mathfrak{s}})), [S^2]\rangle=\pm 1.$$
    \item Suppose $\PD(c_1(\tilde{\mathfrak{s}}|_{\tilde{M}_1}))= \pm [\tilde{E}]\in H_4(\tilde{M}_1,\partial\tilde{M}_1;\mathbb{Z})$, then 
    $$\langle c_1(\ind_\mathbb{C}(\slashed{D}^+(\tilde{M},\tilde{\mathfrak{s}})), [S^2]\rangle=0.$$
    \end{enumerate}
    \end{Lemma}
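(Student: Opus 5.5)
The plan mirrors the proof of Lemma \ref{lemma: index}. By the family Atiyah--Singer index theorem, as in that lemma,
\[
\langle c_1(\ind_\mathbb{C}(\slashed{D}^+(\tilde{M},\tilde{\mathfrak{s}}))),[S^2]\rangle=\frac{1}{48}\langle c_1(\tilde{\mathfrak{s}})^3-c_1(\tilde{\mathfrak{s}})\, p_1(T(\tilde{M}/S^2)),[\tilde{M}]\rangle .
\]
The first step is to express $c_1(\tilde{\mathfrak{s}})$ globally using the Mayer--Vietoris sequence for $\tilde M=\tilde M_1\cup\tilde M_2$. Write $c_1(\tilde{\mathfrak{s}})=\PD(k[\tilde E]+\beta\times[S^2])$ with $k=\pm3$ or $\pm1$, where $\beta$ comes from $H_2(M-\nu(E);\Z)$. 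This works because $\tilde{\mathfrak s}|_{\tilde M_2}$ may be taken to be pulled back from the fiber, after twisting by a line bundle on $S^2$; that twist does not change the degree we want to compute, by the same argument as Lemma \ref{lemma:FSW spinc only on fiber}, or simply because we only need the displayed quantity.

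The second step is to kill the cross terms. The class $\beta\times[S^2]$ is supported in the trivial part $(M-\nu(E))\times S^2$, and $\beta$ is disjoint from $E$ in the fiber. Hence every triple intersection involving $\beta\times[S^2]$ vanishes. The term $(\beta\times[S^2])^3$ and the mixed terms with $[\tilde E]$ are zero because a product family carries no degree in the base direction twice, and $\tilde E$ does not meet the support of $\beta$. Similarly, $\langle p_1(T(\tilde M/S^2)),\beta\times[S^2]\rangle=p_1$ of a trivial family restricted to a product class, which equals $0$. I expect this cross-term vanishing to be the most delicate point. One has to make sure that $\beta$ can be chosen in the complement of $\nu(E)$ and that the vertical tangent bundle is pulled back there. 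The check is parallel to the $(-2)$-sphere case, so I would cite that computation.

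The third step is to plug in Lemma \ref{lemma: intersection number 2}, which gives $\tilde E^3=-2$ and $\langle p_1,[\tilde E]\rangle=-2$. The result is
\[
\frac{1}{48}\bigl(k^3(-2)-k(-2)\bigr)=\frac{-k^3+k}{24}\cdot 1=\frac{k(1-k^2)}{24}.
\]
For $k=\pm1$ this vanishes, which proves (2). For $k=\pm3$ it gives $\frac{\pm3\cdot(-8)}{24}=\mp1$, which proves (1) with the sign convention absorbed into $\pm$. It remains only to double-check the sign bookkeeping between $\tilde E\cdot\tilde E\cdot\tilde E$ and the orientation of $\tilde M_1$.
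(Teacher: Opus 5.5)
Your route is the paper's own: the family index formula, the Mayer--Vietoris decomposition $\PD(c_1(\tilde{\mathfrak s}))=k[\tilde E]+\beta\times[S^2]$, vanishing of every term involving $\beta\times[S^2]$, and substitution of Lemma~\ref{lemma: intersection number 2}. Your arithmetic $\frac{k(1-k^2)}{24}$ is correct. It gives $\mp1$ for $k=\pm3$, which is all the statement claims, and $0$ for $k=\pm1$.

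One justification in your first step is false, however. Twisting by $p^*L$ \emph{does} change the quantity you are computing.
\begin{itemize}
\item The index bundle $\ind_{\C}(\slashed{D}^+)$ gets tensored with $L$, so $\langle c_1(\ind_{\C}),[S^2]\rangle$ shifts by $\mathrm{rank}_{\C}(\ind_{\C})\cdot\deg L=\frac{c_1(\tilde{\mathfrak s}|_M)^2-\sigma(M)}{8}\deg L$.
\item Equivalently, replacing $c_1$ by $c_1+2p^*c_1(L)$ in the formula $\frac{1}{48}(c_1^3-c_1p_1)$ adds $\frac{\deg L}{48}(6c_1^2-2p_1)[M]$.
\item For the $\spinc$ structure used in Proposition~\ref{Prop: nonvanish 2}, the rank is $1$, so the shift is nonzero whenever $\deg L\neq 0$.
\end{itemize}
Lemma~\ref{lemma:FSW spinc only on fiber} says the Seiberg--Witten count is invariant under twisting; it says nothing about the index bundle. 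This is exactly why Proposition~\ref{Prop: gluing} fixes the pulled-back normalization on $\tilde M_2$.

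A twist would also violate the hypothesis. It changes $c_1(\tilde{\mathfrak s}|_{\tilde M_1})$ by $2p^*c_1(L)$, which is nonzero in $H^2(\tilde M_1)$: its Lefschetz dual is $\deg L$ times the relative fiber class, and that class meets $\Sigma_p$ once.

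No twist is needed; the hypothesis itself forces the decomposition.
\begin{itemize}
\item Since $[\tilde E]$ lies in the interior, $c_1(\tilde{\mathfrak s}|_{\tilde M_1})$ comes from $H^2(\tilde M_1,\partial\tilde M_1)$.
\item Hence it restricts to zero on $\partial\tilde M_1\cong S^3\times S^2$.
\item So $c_1(\tilde{\mathfrak s}|_{\tilde M_2})$ has no $H^2(S^2)$ K\"unneth component; it is pulled back from $M-\nu(E)$.
\item Mayer--Vietoris, using $H^1(S^3\times S^2)=0$, then gives your decomposition with no extra fiber-class term.
\end{itemize}
With this replacement your proof is complete.
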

\begin{proof}
    By the family Atiyah-Singer index theorem, 
    \begin{align*}
        \langle c_1(\ind_\mathbb{C}(\slashed{D}^+(\tilde{M},\tilde{\mathfrak{s}}))),[S^2] \rangle=\frac{1}{48}\langle c_1(\tilde{\mathfrak{s}})^3-c_1(\tilde{\mathfrak{s}}) p_1(T(\tilde{M}/S^2)), [\tilde{M}]\rangle.
    \end{align*}
    Suppose $\PD(c_1(\tilde{\mathfrak{s}}|_{\tilde{M}_1}))=\pm 3[\tilde{E}]$, by Mayer-Vietoris sequence we have 
    \[\PD(c_1(\tilde{\mathfrak{s}}))=\pm 3[\tilde{E}]+\beta\times[S^2]\]
    for some $\beta\in \mathrm{Im}(H_2(M-\nu (E);\mathbb{Z})\to H_2(M;\mathbb{Z}))$. We have:
    \[[\tilde{E}]^i\cdot(\beta\times [S^2])^{3-i}=0,\  \forall 0\leq i<3\]
    and 
    \[\langle p_1(T(\tilde{M}/S^2)),\beta\times [S^2]\rangle=0.\]
    Therefore, 
    \[\langle c_1(\ind_\mathbb{C}(\slashed{D}^+(\tilde{M},\tilde{\mathfrak{s}}))),[S^2]\rangle=\pm \frac{1}{48}(27 [\tilde{E}]^3-3\langle p_1(T(\tilde{M}/S^2)), [\tilde{E}]\rangle)=\pm 1.\]
    Similarly, suppose $\PD(c_1(\tilde{\mathfrak{s}}|_{\tilde{M}_1}))=\pm[\tilde{E}]$,  then  \[\langle c_1(\ind_\mathbb{C}(\slashed{D}^+(\tilde{M},\tilde{\mathfrak{s}}))),[S^2]\rangle=\pm \frac{1}{48}([\tilde{E}]^3-\langle p_1(T(\tilde{M}/S^2)), [\tilde{E}]\rangle)=0.\]
\end{proof}
\begin{Proposition}\label{Prop: nonvanish 2}
    Let $M_0$ be a K3-type 4-manifold. On the blown-up manifold $M=M_0\# n\overline{\mathbb{CP}^2}$, consider the simultaneous reflection $r_E$ along the $n$ exceptional $(-1)$-spheres $E_1,\cdots, E_n$. Let $[\tilde{\gamma}]\in \pi_1(\Diff(M))$ be the standard clutching function associated to $r_E$ constructed above. Let $\mathfrak{s}_0$ be the spin $\spinc$ structure on $M_0$, and $\mathfrak{s}^{(i)}_{k}$ be the $\spinc$ structure on the $i$-th $\overline{\mathbb{CP}^2}$ with first Chern class $\PD(k[E_i])$. Let $\mathfrak{s}=\mathfrak{s}_0\#\mathfrak{s}^{(1)}_{-3}\#\mathfrak{s}^{(2)}_{-1}\#\cdots\#\mathfrak{s}^{(n)}_{-1}$ be a $\spinc$ structure on $M$. Then 
    $\SW([\tilde{\gamma}],\mathfrak{s})=\pm\SW(M_0, \mathfrak{s}_0)$ is odd.
\end{Proposition}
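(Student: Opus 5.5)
We follow the proof of Proposition \ref{Prop: nonvanish}, now gluing along the boundaries of the tubular neighbourhoods of the exceptional spheres. The clutching function $\tilde\gamma$ for $r_E$ is a product of commuting loops $\tilde\gamma^{(i)}$, one for each $E_i$ and supported in $\nu(E_i)$. Since $\SW(-,\mathfrak{s})$ is a homomorphism (Proposition \ref{Prop: group homomorphism}), we have
\[
\SW([\tilde\gamma],\mathfrak{s})=\sum_{i=1}^n \SW([\tilde\gamma^{(i)}],\mathfrak{s}).
\]
Alternatively, we can treat all the $E_i$ at once by taking $M_1=\bigsqcup_i \nu(E_i)$. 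For the gluing formula it is cleaner to handle one sphere at a time.

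For each $i$, decompose $M=\nu(E_i)\cup_{S^3}(M-\nu(E_i))$. The region $M_1=\nu(E_i)\cong\overline{\CP^2}-D^4$ has $b_1=b_2^+=0$, and $S^3$ is an L-space. The complement has $b_2^+=3>1$. The fiber $\mathfrak{s}$ has $c_1^2=-9-(n-1)$ and $\sigma(M)=-16-n$, so $d(M,\mathfrak{s})=(-9-(n-1)+48+3n-2(24+n))/4=-2$. Hence a family extension has $d(\tilde M,\tilde{\mathfrak{s}})=0$. As in Proposition \ref{Prop: nonvanish}, we twist by a line bundle from $S^2$ (Lemma \ref{lemma:FSW spinc only on fiber}) so that $\tilde{\mathfrak{s}}$ is pulled back on the trivial part, with $\PD(c_1(\tilde{\mathfrak{s}}|_{\tilde M_1}))=k_i[\tilde E_i]$, where $k_1=-3$ and $k_i=-1$ for $i\ge2$.

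Proposition \ref{Prop: gluing} requires a reference $\spinc$ structure $\mathfrak{s}_0'$ with $d=0$ that agrees with $\mathfrak{s}$ on the complement. For this we change only the $i$-th summand.
\begin{itemize}
\item For $i=1$ we replace $\mathfrak{s}^{(1)}_{-3}$ by $\mathfrak{s}^{(1)}_{-1}$. The resulting structure $\mathfrak{s}_0\#\mathfrak{s}^{(1)}_{-1}\#\cdots\#\mathfrak{s}^{(n)}_{-1}$ has $d=0$, and by the blow-up formula its Seiberg--Witten invariant equals $\SW(M_0,\mathfrak{s}_0)$.
\item For $i\ge2$ we replace $\mathfrak{s}^{(i)}_{-1}$ by $\mathfrak{s}^{(i)}_{1}$ or $\mathfrak{s}^{(i)}_{-3}$. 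The first choice does not change $c_1^2$ and so gives $d=-2\neq 0$. The second gives $d=-4$. So for $i\ge 2$ there is no restriction with $d=0$. In that case the gluing formula as stated does not directly apply, but its index factor vanishes by Lemma \ref{lemma: index 2}(2).
\end{itemize}
The main subtlety is exactly this case $i\ge2$. I would argue there directly that $\SW([\tilde\gamma^{(i)}],\mathfrak{s})=0$, via J.~Lin's general gluing theorem: the family invariant is a product of the relative family invariant of $\tilde M_1$, which is governed by the index bundle and vanishes here since its $c_1$ is $0$, and the relative invariant of the complement. The alternative is to treat all the $E_i$ simultaneously and note that the product index contribution only involves the $i=1$ factor.

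For $i=1$, Lemma \ref{lemma: index 2}(1) gives the index factor $\pm1$. Therefore
\[
\SW([\tilde\gamma],\mathfrak{s})=\pm\SW(M_0\#n\overline{\CP^2},\mathfrak{s}_0\#\mathfrak{s}^{(1)}_{-1}\#\cdots)=\pm\SW(M_0,\mathfrak{s}_0).
\]
This last value is odd by \cite{morgan1997homotopy}.

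The step I expect to be hardest is justifying the vanishing for $i\ge2$, or equivalently arranging a single gluing along $\bigsqcup_i S^3$ in which the separate pieces decouple. I would first try the simultaneous decomposition with $M_1=\bigsqcup_i\nu(E_i)$. Its boundary is a disjoint union of copies of $S^3$, and I would use additivity of the index over the disjoint pieces, so that
\[
\langle c_1(\ind),[S^2]\rangle=\sum_i(\text{contribution of }E_i)=\pm1+0.
\]
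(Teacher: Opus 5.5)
Your reduction $[\tilde\gamma]=\sum_i[\tilde\gamma^{(i)}]$, the dimension count and the $i=1$ term are all fine. You also correctly identify the obstacle: for $i\ge2$ no $\spinc$ structure that agrees with $\mathfrak{s}$ off $\nu(E_i)$ has $d=0$, so Proposition~\ref{Prop: gluing} cannot be applied to $\tilde\gamma^{(i)}$ with $M_1=\nu(E_i)$. Neither remedy you offer closes this gap.

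\begin{itemize}
\item The claimed vanishing of $\SW([\tilde\gamma^{(i)}],\mathfrak{s})$ relies on a more general factorization of family invariants into relative invariants. That is not the statement available here.
\item The simultaneous decomposition with $M_1=\bigsqcup_i\nu(E_i)$ does not satisfy the hypotheses of Proposition~\ref{Prop: gluing}. For $n\ge2$ the separating hypersurface $\bigsqcup_i S^3$ is disconnected, so it is not a rational homology sphere and not an L-space.
\item Additivity of the index only handles the topological factor $\langle c_1(\ind_\mathbb{C}),[S^2]\rangle$, which is computed on the closed total space $\tilde M$ anyway. It does not justify the gluing formula itself.
\end{itemize}
As written, neither the equality $\SW([\tilde\gamma],\mathfrak{s})=\pm\SW(M_0,\mathfrak{s}_0)$ nor the oddness is established.

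The missing idea, which is what the paper uses, is to connect the neighbourhoods. Take $M_1=\nu(E_1)\natural\cdots\natural\nu(E_n)$, the boundary connected sum formed inside $M$ along arcs in the connected complement. Then:
\begin{itemize}
\item $\partial M_1\cong S^3$, $b_1(M_1)=b_2^+(M_1)=0$ and $b_2^+(M-M_1)=3$;
\item $\tilde\gamma$ is supported in $\mathrm{Int}(M_1)$;
\item the single structure $\mathfrak{s}_0\#\mathfrak{s}^{(1)}_{-1}\#\cdots\#\mathfrak{s}^{(n)}_{-1}$ agrees with $\mathfrak{s}$ on $M-M_1$ and has $d=0$.
\end{itemize}
Proposition~\ref{Prop: gluing} then applies to the whole loop. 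Since the classes $[\tilde E_i]$ are disjoint, the index factor is $\pm1+0+\cdots+0=\pm1$ by Lemma~\ref{lemma: index 2}, and the blow-up formula finishes the argument.

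The same device repairs your one-sphere-at-a-time route. For $i\ge2$, apply Proposition~\ref{Prop: gluing} to $\tilde\gamma^{(i)}$ with $M_1=\nu(E_1)\natural\nu(E_i)$ and the same reference structure. The family is trivial over $\nu(E_1)$, so that piece contributes nothing to the index. The sphere $E_i$ contributes $0$ by the computation in Lemma~\ref{lemma: index 2}(2). Hence $\SW([\tilde\gamma^{(i)}],\mathfrak{s})=0$.
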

\begin{proof}
Let $M_1$ be the boundary connected sum of $\nu E_1,\cdots, \nu E_n$ in $M$. Then $\partial M_1\cong S^3$ and $\tilde{M}=\tilde{M}_1\cup \tilde{M}_2$, where $M_1\to \tilde{M}_1\to S^2$ has clutching function $\tilde{\gamma}$ while $\tilde{M}_2=(M-M_1)\times S^2$ is a trivial bundle.  Pick any family $\spinc$ structure $\tilde{\mathfrak{s}}$ on $\tilde{M}$ such that $\tilde{\mathfrak{s}}|_M\cong \mathfrak{s}$.  By twisting $\tilde{\mathfrak{s}}$ with a complex line bundle pulled back from $S^2$, we may assume $\tilde{\mathfrak{s}}|_{\tilde{M}_2}$ is the pull-back of $\mathfrak{s}|_{M_2}$. Note that \[d(M, \mathfrak{s}_0\#\mathfrak{s}^{(1)}_{-1}\#\mathfrak{s}^{(2)}_{-1}\#\cdots\#\mathfrak{s}^{(n)}_{-1})=0\]
\[d(M, \mathfrak{s})=d(M, \mathfrak{s}_0\#\mathfrak{s}^{(1)}_{-3}\#\mathfrak{s}^{(2)}_{-1}\#\cdots\#\mathfrak{s}^{(n)}_{-1})=-2\]
and $d(\tilde{M}, \tilde{\mathfrak{s}})=d(M, \mathfrak{s})+2=0$.  Note that the first Chern class of the index bundle is additive under boundary connected sum. Thus, by Proposition \ref{Prop: gluing}, Lemma \ref{lemma: index 2} and the blow-up formula for Seiberg--Witten invariant, we have 
\begin{align*}
    \SW([\tilde{\gamma}],\mathfrak{s})&=-\langle c_1(\ind_\mathbb{C}(\slashed{D}^+(\tilde{M},\tilde{\mathfrak{s}})), [S^2]\rangle \cdot \SW(M,\mathfrak{s}_0\#\mathfrak{s}^{(1)}_{-1}\#\mathfrak{s}^{(2)}_{-1}\#\cdots\#\mathfrak{s}^{(n)}_{-1})\\
    &=\pm \SW(M_0,\mathfrak{s}_0)\equiv 1\mod 2
\end{align*}
\end{proof}

\begin{proof}[Proof of Theorem \ref{thm:main thm}\ref{thm:main-2}]
    Let $\mathfrak{s}=\mathfrak{s}_0\#\mathfrak{s}^{(1)}_{-3}\#\mathfrak{s}^{(2)}_{-1}\#\cdots\#\mathfrak{s}^{(n)}_{-1}$. Then $r_E^*\mathfrak{s}=\mathfrak{s}_0\#\mathfrak{s}^{(1)}_{3}\#\mathfrak{s}^{(2)}_{1}\#\cdots\#\mathfrak{s}^{(n)}_{1}=\bar{\mathfrak{s}}$, so we have 
    \[\SW([\xi],\mathfrak{s})=-\SW([\xi],r_E^*\mathfrak{s})\]
    by Proposition \ref{Prop: charge conjugation}.  Therefore, by Corollary \ref{relation of SW} and Proposition \ref{Prop: nonvanish 2},
    \[\SW([\tilde{\eta}],\mathfrak{s})\equiv\SW([\tilde{\gamma}],\mathfrak{s})\neq 0 \mod 2\]
    So the corresponding family $\tilde M\to S^2$ can't be a trivial bundle.
    By Lemma \ref{lem:criterion}, the theorem holds.
\end{proof}

\section{Non-kinetic homotopy coherent actions}
\label{sec:K3}
Recall that a homotopy coherent action $H\colon BG \to B\Diff(M)$ is said to be \textbf{kinetic}, if there exists a strict action $G\to \Diff(M)$ that induces $H$.

In this section, we will show the existence of non-kinetic homotopy coherent $\Z/2$ action on certain closed $4$-manifolds.
This can be proved by a similar argument in \cite[Section 3]{KPTcoherent}.

\begin{lemma}
    \label{lem:commute}
    Let $\varphi$ be the neck Dehn twist along a smoothly embedded $S^3$ in $M$.
    Then there exists an isotopy $H_t$ from $\varphi^2$ to $\id$ such that $H_t\varphi=\varphi H_t$ for each $t$.
\end{lemma}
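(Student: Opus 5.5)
We will work in the local model of a collar $S^3\times[0,1]\subset M$ around the embedded sphere and build every map as a function supported in this collar. Fix a loop $\lambda\colon [0,1]\to \mathrm{SO}(4)$ representing the generator of $\pi_1(\mathrm{SO}(4))$. Concretely, take $\lambda_s$ to be rotation by $2\pi s$ in one coordinate plane of $\R^4$, acting on $S^3$. Choose a cutoff $c\colon[0,1]\to[0,1]$ equal to $0$ near $0$ and $1$ near $1$. The neck Dehn twist is then $\varphi(x,u)=(\lambda_{c(u)}x,u)$ on the collar and the identity elsewhere. Its square is $\varphi^2(x,u)=(\lambda_{c(u)}^2x,u)=(\mu_{c(u)}x,u)$, where $\mu_s=\lambda_s^2$ is a loop representing twice the generator. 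This loop is null-homotopic, because $\pi_1(\mathrm{SO}(4))\cong\Z/2$.

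The key point is to choose the null-homotopy of $\mu$ inside a subgroup that commutes with the whole loop $\lambda$. We will use the splitting $\mathrm{SO}(4)\cong (S^3\times S^3)/\{\pm1\}$, acting by $x\mapsto axb^{-1}$ on unit quaternions. We take $\lambda_s$ to be left multiplication by $a_s=e^{\pi i s}$, a path from $1$ to $-1$ in $S^3$. Since $-1$ is identified with the identity, this is a loop in $\mathrm{SO}(4)$ generating $\pi_1$. Then $\mu_s$ is left multiplication by $e^{2\pi i s}$, a genuine loop in the left factor $S^3$, and left multiplications by elements of the circle $\{e^{i\theta}\}$ all commute with one another.

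The loop $\theta\mapsto e^{i\theta}$ is contractible in $S^3$, but a contraction cannot stay inside this circle, and left multiplications by non-commuting quaternions do not commute with $\lambda$. This is the main obstacle. To get around it, we will contract the loop by an isotopy in the collar coordinate rather than in $\mathrm{SO}(4)$. The first attempt is $H_t(x,u)=(L_{e^{2\pi i\,c_t(u)}}x,u)$, where $c_t$ runs from $c$ to a function that is constant $0$. This fails: the boundary values must stay $0$ near $u=0$ and $1$ near $u=1$, and $e^{2\pi i\cdot 1}=1$, so $c_t$ could in principle move all of its values to integers, but the degree obstruction is exactly the class of $\mu$, which is trivial only in $S^3$ and not in $S^1$.

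So we will instead use a genuine null-homotopy $\nu\colon[0,1]^2\to S^3$ of $\theta\mapsto e^{2\pi i\theta}$, and set $H_t(x,u)=(\nu_t(c(u))\,x,u)$. We then need $\nu_t(s)$ to commute with $e^{\pi i c(u)}$ for every $u$ and every $t$. As stated this is impossible, because commuting with all of $e^{\pi i\R}$ forces values in the circle. The fix is to exploit the free choice of the model of $\varphi$ and follow \cite[Section 3]{KPTcoherent}. Let $\varphi$ act by left multiplication, $\varphi(x,u)=(e^{\pi i c(u)}x,u)$, and perform the null-homotopy of $\varphi^2$ by right multiplication, using $H_t(x,u)=(e^{2\pi i c(u)}x\,\overline{\nu_t(c(u))}\,\nu_t(c(u))\ldots)$. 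More precisely, on $S^3/\pm$ the left loop $e^{2\pi i s}$ equals the right loop $e^{-2\pi i s}$ up to homotopy, since both generate; we use the explicit homotopy $(a,b)\mapsto$ interpolation in $S^3\times S^3$ from $(e^{2\pi is},1)$ to $(1,1)$ through $(\alpha_t(s),\beta_t(s))$ with $\beta$ chosen so that $\alpha_t(s)$ commutes with $e^{\pi i c}$, so $\alpha$ stays in the circle, and the nontrivial contraction is carried entirely by the right factor. Right multiplications commute with left multiplications, so $H_t\varphi=\varphi H_t$.

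The remaining step, which we expect to be the genuinely delicate one, is to verify that such an $(\alpha_t,\beta_t)$ exists. This amounts to showing that the left circle loop is homotopic in $\mathrm{SO}(4)$, with endpoints fixed, to a loop of the form (circle-valued left) $\times$ (arbitrary right), and that this homotopy ends at the identity. We will settle this with the $\pi_1$ computation for $U(1)\times S^3\to \mathrm{SO}(4)$, whose image has $\pi_1\cong\Z$ mapping onto $\Z/2$, together with a smoothing of the result in $u$.
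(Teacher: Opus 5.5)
There is a genuine gap, and it sits exactly at the step you flag as delicate. That step is not merely delicate; it is false.

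\textbf{The model of $\varphi$ is wrong.} In $\mathrm{SO}(4)=(S^3\times S^3)/\{\pm(1,1)\}$ the pair $(-1,1)$ acts by $x\mapsto -x$. So left multiplication by $e^{\pi i s}$, $s\in[0,1]$, is a path from $\id$ to $-\id$, not a loop. Your $\varphi$ is therefore the antipodal map on slices near $u=1$ and does not extend by the identity. A correct generator is, for example, $x\mapsto e^{\pi i s}xe^{-\pi i s}$.

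\textbf{The pointwise contraction cannot exist.} Suppose $H_t(x,u)=(\nu_t(c(u))x,u)$ and $H_t\varphi=\varphi H_t$. Then $\nu_t(s)$ must commute with the twist at parameter $s$ for every $s\in(0,1)$. In your left-multiplication model, lift $\nu_t(s)$ to $(\alpha_t(s),\beta_t(s))$. Commutation forces $\alpha_t(s)e^{\pi i s}=e^{\pi i s}\alpha_t(s)$, hence $\alpha_t(s)\in U(1)$. Since $\nu_t(0)=\nu_t(1)=\id$, the lifts at $s=0,1$ vary continuously in the discrete set $\{\pm(1,1)\}$, so they are $(1,1)$. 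Thus $s\mapsto\alpha_t(s)$ is a based loop in $U(1)$ whose degree is independent of $t$. But this degree is $1$ at $t=0$ and $0$ at $t=1$, a contradiction.

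Equivalently, the centralizer $(U(1)\times S^3)/\pm\cong U(2)$ has $\pi_1\cong\Z$, and $\mu_s=e^{2\pi i s}\cdot I$ represents $2$ there. The fact that $\mu$ dies in $\pi_1(\mathrm{SO}(4))$ does not help, because the whole homotopy, not just its endpoints, has to commute with $\varphi$. With the correct generator the generic centralizer is a maximal torus, and $[\lambda^2]$ is again nonzero in its $\pi_1\cong\Z^2$. So in either model, no contraction of $\lambda^2$ performed pointwise in the parameter $c(u)$ can commute with $\varphi$.

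\textbf{The missing idea.} The commutation constraint holds slice by slice, and it is vacuous on slices where $\varphi$ is already the identity. So you should separate supports rather than look for commuting contractions. The paper proceeds as follows.
\begin{enumerate}
\item Take $\varphi(y,s)=(\alpha(\chi(s))y,s)$, with $\chi$ rising on $[1/6,1/3]$ and $\alpha$ a loop with pairwise commuting values, such as a one-parameter subgroup.
\item The sliding isotopy $H'_t(y,s)=(\alpha(\chi(s-g(t)))y,s)$ takes values in the abelian family $\{\alpha(\cdot)\}$, so it commutes with $\varphi$.
\item Then $(H'_t)^2$ moves $\varphi^2$ to $\varphi'^2$, which is supported in $S^3\times[2/3,5/6]$.
\item Contract $\varphi'^2$ to $\id$ by an isotopy supported in $S^3\times[1/2,1]$, for instance one coming from a null-homotopy of $\alpha^2$ rel endpoints. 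Since $\varphi=\id$ there, commutation holds by disjointness of supports.
\end{enumerate}
Your first instinct, to move in the collar coordinate, was the right one. That motion should be used to displace the twist away from the support of $\varphi$, not to change its homotopy class.
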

\begin{proof}
    This is in fact similar to \cite[Lemma 3.1]{KPTcoherent}. 
    Choose a tubular neighborhood $S^3 \times [0,1] \subset M$ such that the Dehn twist $\varphi$ is supported in $S^3\times [0,1]$, and $\varphi$ is
    \begin{equation}
        \varphi(y,s) = (\alpha(\chi(s)) \cdot y,s),
    \end{equation}
    where $\alpha \colon [0,1] \to SO(4)$ is a loop representing the generator of $\pi_1(SO(4)) = \Z/2$ and $\chi\colon \R \to [0,1]$ is a smooth nondecreasing function with $\chi(s)=0$ for $s\leq 1/6$ and $\chi(s)=1$ for $s\geq 1/3$. We may assume $\alpha(t_1)\alpha(t_2)=\alpha(t_2)\alpha(t_1)$ for any $t_1, t_2\in [0,1]$.
     
    We further shift this Dehn twist to $\varphi'$ which is supported in $S^3\times [0,1]$ by 
    \begin{equation}
        \varphi'(y,s) = (\alpha(\chi(s-1/2)\cdot y, s).
    \end{equation}
    Then $\varphi$ can be isotoped to $\varphi'$ via $H_t'$ as 
    \begin{equation}
        H_t'(y,s) = (\alpha(\chi(s-g(t)))\cdot y,s),
    \end{equation}
    where $g\colon \R \to [0,1/2]$ is a smooth nondecreasing function such that $g(t) = 0$ for $t\leq 1/3$ and $g(t) = 1/2$ for $t \geq 2/3$.
    Also $H_t' \varphi = \varphi H_t'$ for all $t$.

    Notice that $\varphi'^2$ can be smoothly isotoped to $\id$ via an isotopy $H_t''$ which is supported in $S^3 \times [1/2,1]$.
    Hence $H_t''$ commutes with $\varphi$.
    Therefore, the concatenation of $(H_t')^2$ and $H_t''$ yields the desired isotopy $H_t$ from $\varphi^2$ to $\id$, and the commutativity follows.
\end{proof}

\begin{proof}[Proof of Theorem \ref{thm:K3}]
We will use the isotopy above to build a homotopy coherent action. 

Let us first consider the standard simplicial model of $B\Z/2$. 
The set of $n$-simplices $X_n$ is $\{(x_1,\dots,x_n) \mid x_i \in \Z/2 \}$, and the face maps $d_i\colon X_n \to X_{n-1}$ are defined as 
\begin{align*}
    d_0(x_1,\dots,x_n) & = (x_2,x_3,\dots,x_n), \\
    d_i(x_1,\dots,x_n) & = (x_1,\dots, x_i+x_{i+1},\dots, x_n), &  0<i<n \\
    d_n(x_1,\dots,x_n) & = (x_1,\dots,x_{n-1}).
\end{align*}
The only nondegenerate simplex in $X_n$ is $e_n = (1,1,\dots,1)$, with 
\begin{align*}
    d_0(e_n) & = e_{n-1}, \\
    d_n(e_n) & = e_{n-1}, \\
    d_i(e_n) & = (1,\dots, 1,0,1,\dots,1), & 0<i<n.
\end{align*}

We now construct the bundle inductively over the skeleta of $B\mathbb{Z}/2$. 
Starting from the trivial $M$-bundle over the $0$-cell, we glue the bundle over $1$-cell via $\varphi$. 
Next, we can use the isotopy $H_t$ to glue in the $X$-bundle over $2$-cell.

Then we extend the construction inductively over the $n$-cells for $n\geq 3$ .
Suppose there is already a gluing datum $h_{n-1}$ for the $n-1$ cell $e_{n-1}$. Note that only $d_0$ and $d_n$ will map $e_n$ into nondegenerate simplex, we only need to find a gluing datum $h_n$ interpolating between $h_{n-1}\circ \varphi$ and $\varphi \circ h_{n-1}$.

Since $h_2$ is exactly $H_t$ and $H_t$ commutes with $\varphi$, we can simply take $h_3$ to be constant, which also commutes with $\varphi$. Therefore we can simply take constant maps to obtain all gluing data via the commutativity.
And this completes the proof of Theorem \ref{thm:K3}.
\end{proof}

\begin{proof}[Proof of Corollary \ref{cor:nonkinetic}]
    Notice that the monodromy of the homotopy coherent action constructed in Theorem \ref{thm:K3} is $\varphi$.  
    Since $\varphi$ is not smoothly isotopic to $\id$, the homotopy coherent action $H\colon B\Z/2 \to B\Diff(M)$ is not nullhomotopic.
    By \cite[Theorem 1]{Matsumoto}, such $M$ does not admit nontrivial homologically trivial involutions, and this implies such homotopy coherent action is non-kinetic.

    By \cite{KMDehn} and \cite{linstablization}, the neck Dehn twist of $K3\#K3$ and its extension to $K3\#K3\#S^2\times S^2$ are not smoothly isotopic to $\id$. 
    Hence these manifolds indeed admit non-kinetic homotopy coherent actions.
\end{proof}
\begin{rmk}
    In fact, the argument of Kronheimer--Mrowka \cite{KMDehn} and J. Lin \cite{linstablization} also works for $K3$-type $4$-manifolds.
    As a result, there exist non-kinetic homotopy coherent actions on $M_1\#M_2$ and $M_1\#M_2\#S^2\times S^2$ when $M_i$ are of $K3$-type.
\end{rmk}

\bibliographystyle{plain}
\bibliography{ref}
\end{document}